\numberwithin{equation}{section}
\theoremstyle{definition}
	\newtheorem{definition}{Definition}
	\newtheorem*{definition*}{Definition}
	\numberwithin{definition}{section}
\theoremstyle{plain}
	\newtheorem{lemma}[definition]{Lemma}
	\newtheorem{proposition}[definition]{Proposition}
	\newtheorem{theorem}[definition]{Theorem}
	\newtheorem*{theorem*}{Theorem}
	\newtheorem{corollary}[definition]{Corollary}
	\newtheorem*{claim*}{Claim}
\theoremstyle{remark}
	\newtheorem{remark}[definition]{Remark}
\newcommand{\C}{\mathbb{C}}
\newcommand{\R}{\mathbb{R}}
\newcommand{\deq}{\mathrel{\mathop:}=}
\numberwithin{equation}{section}
\renewcommand{\emph}{\textbf}
\numberwithin{equation}{section}
\begin{document}

\title[Perturbations of the Spence-Abel equation]{Perturbations of the {S}pence-{A}bel equation\\ and deformations
  of the dilogarithm function}
\author[T. Hartnick]{Tobias Hartnick}
\address{Mathematics Department, Technion, Haifa 32000, Israel}
\email{hartnick@tx.technion.ac.il}

\author[A. Ott]{Andreas Ott}
\address{Mathematisches Institut, Ruprecht-Karls-Universit\"at Heidelberg, Im Neuenheimer Feld 288, 69120 Heidelberg, Germany}
\email{aott@mathi.uni-heidelberg.de}

\begin{abstract} We analyze existence, uniqueness and regularity of solutions for perturbations of the Spence-Abel equation for the Rogers' dilogarithm. As an application we deduce a version of Hyers-Ulam stability for the Spence-Abel equation.
 Our analysis makes use of a well-known cohomological interpretation of the Spence-Abel equation and is based on our recent results on continuous bounded cohomology of ${\rm SL}_2(\R)$. 
\end{abstract}
\maketitle

\section{Introduction}
Given a system of functional equations, it is natural to investigate whether solutions to small perturbations of the equations remain close to solutions of the original system. This type of stability analysis is often referred to as the \emph{(generalized) Hyers-Ulam problem} since it was popularized by Ulam \cite[Chapter 6]{Ulam}, and the first major results (concerning the linear functional equation) were obtained by Hyers \cite{Hyers}. The investigation of generalized Hyers-Ulam stability has branched out into several different directions. In the case of of the linear functional equation on general groups it has led to the study of real-valued quasimorphisms (a notion closely related to bounded cohomology in the sense of \cite{Gromov}) and more generally perturbations of unitary representations \cite{BOT} and quasimorphisms with non-commutative target \cite{HS, FK}. There is also a wide literature concerning Hyers-Ulam stability of non-linear functional equations, whose development up to the late 90's is summarized in the monograph \cite{HIR}. For more recent developments and applications see also \cite{Jung, Kannappan}.

The present article is concerned with \emph{Hyers-Ulam stability of the Spence-Abel equation}, the functional equation of the Rogers' dilogarithm. This functional equation is substantially more involved than the equations whose Hyers-Ulam stability was considered previously. Before we state our stability result, let us briefly recall some background concerning the Rogers' dilogarithm and its functional equation.

Rogers' dilogarithm arises as a certain symmetrizations of the restriction of the \emph{dilogarithm} ${\rm Li}_2$ to $(0,1)$. On this interval the dilogarithm can be given either by an integral or a convergent power series \cite{Leibniz, Euler}:
\[
{\rm Li}_2(x) = -\int_0^x \frac{\log(1-t)}{t}dt = \sum_{n=1}^\infty \frac{x^n}{n^2}.
\]
As Zagier has pointed out \cite{Zagier}, ``\textit{[t]oday one needs no apology for devoting a paper to this function}''. This is partly due to the applications that the dilogarithm has found in conformal field theory \cite{CFT1, CFT2, CFT4, CFT3}, but also due to its ubiquity in different areas of mathematics including (but not limited to) hyperbolic geometry \cite{HyperbolicVolume}, cohomology of Lie groups \cite{Bloch, Dupont}, algebraic $K$-theory \cite{K1, Gon, K2} and modular forms \cite{ModularForms}. Numerous mysterious identities involve the dilogarithm \cite{Zagier}, most notably the \emph{$5$-term equation}, which was discovered independently by Spence \cite{Spence}, Abel \cite{Abel}, Hill \cite{Hill}, Kummer \cite{Kummer} and Schaeffer \cite{Schaeffer}. For $0<x<y<1$ it can be stated as
\begin{eqnarray*}
&&{\rm Li}_2\left(x\right) + {\rm Li}_2\left(y\right) +  {\rm Li}_2\left(\frac{1-x}{1-xy}\right) + {\rm Li}_2\left({1-xy}\right)+ {\rm Li}_2\left(\frac{1-y}{1-xy}\right)\\ &=& \frac{\pi^2}{6}-\log(x)\log(1-x) - \log(y)\log(1-y) + \log\left(\frac{1-x}{1-xy}\right)\log\left(\frac{1-y}{1-xy}\right).\end{eqnarray*}
A century after Spence and Abel, Rogers \cite{Rogers} observed that a certain symmetrization of the dilogarithm obeys a much simpler $5$-term equation. He introduced the function $L_2: (0,1) \to \R$ given by
\[L_2(x) := -\frac 1 2 \cdot \int_0^x \left( \frac{\log(t)}{1-t}+\frac{\log(1-t)}{t} \right)dt = \frac{1}{2}({\rm Li}_2(x) - {\rm Li}_2(1-x)+\zeta(2)),\]
which is nowadays referred to as \emph{Rogers' dilogarithm}, and pointed out that for this function the $5$-term equation for the Euler dilogarithm simplifies to
\begin{equation}
L_2(x) -L_2(y) - L_2\left(\frac{x}{y}\right) - L_2\left(\frac{y-1}{x-1}\right) +  L_2\left(\frac{x(y-1)}{y(x-1)}\right)=0\label{Spence-Abel}
\end{equation}
for all $0<x<y<1$. This equation is often referred to as the \emph{Spence-Abel functional equation} for $L_2$. By construction, Rogers' dilogarithm also has the \emph{reflection symmetry}
\begin{equation}\label{L2Symmetry} L_2(1-x) = \zeta(2)-L_2(x).\end{equation}

It is an amusing exercise in differentiation to prove that $L_2$ is in fact the only three-times differntiable function on the interval $(0,1)$ satisfying \eqref{Spence-Abel} and \eqref{L2Symmetry} (see \cite[Appendix A]{Dupont}). Proving uniqueness under lower regularity assumptions is a much harder problem. It was only established relatively recently by Burger and Monod \cite{Burger/On-and-around-the-bounded-cohomology-of-SL2} that any measurable solution of \eqref{Spence-Abel} and \eqref{L2Symmetry} has to agree with Rogers' dilogarithm almost everywhere. This implies in particular, that Rogers' dilogarithm is the unique continuous solution. Concerning Hyers-Ulam stability of the system \eqref{Spence-Abel} -- \eqref{L2Symmetry} we have the following main result:
\begin{theorem}[Stability of the Spence-Abel system]\label{Stability} Let $L:(0, 1) \to \R$ be a measurable function such that
\[
 \sup_{x,y \in (0,1)}\left|L(x) -L(y) - L\left(\frac{x}{y}\right) - L\left(\frac{y-1}{x-1}\right) +  L\left(\frac{x(y-1)}{y(x-1)}\right)\right| \leq \epsilon
\]
and $L(1-x) = C-L(x)$ for some $\epsilon >0$ and $C \in \R$. Then
\[
\|L-L_{2}\|_\infty \leq 11 \cdot \epsilon + 6 \cdot |C-\zeta(2)|.
\]
\end{theorem}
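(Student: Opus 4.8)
The plan is to exploit the well-known interpretation of the Spence--Abel equation as a cocycle condition on the boundary circle $S^1 = \partial\mathbb{H}^2$ of the symmetric space of $\mathrm{SL}_2(\R)$. Writing $S[L](x,y)$ for the left-hand side of the displayed five-term expression, the operator $S$ is $\R$-linear in $L$, and after identifying the pair $(x,y)$ with the cross-ratio of a quadruple of boundary points it coincides, up to the standard orientation and sign bookkeeping, with the simplicial coboundary $\delta$ applied to the $\mathrm{SL}_2(\R)$-invariant cochain $c_L = L\circ\mathrm{cr}$ in the complex $L^\infty\bigl((S^1)^{\bullet+1}\bigr)^{\mathrm{SL}_2(\R)}$ computing the continuous bounded cohomology of $\mathrm{SL}_2(\R)$. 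Here $c_L$ is a cochain on quadruples and $\delta c_L$ a cochain on quintuples, so the hypothesis reads $\|\delta c_L\|_\infty \le \epsilon$: thus $\delta c_L$ is a genuine bounded cocycle which is exact by construction.

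First I would reduce to a normalized problem. Since $L_2$ solves the equation exactly and $S$ is linear, the function $f = L - L_2$ satisfies $\|S[f]\|_\infty \le \epsilon$ together with $f(1-x)+f(x) = C-\zeta(2)$. Using only that $S$ sends the constant $a$ to $-a$, subtracting $\tfrac12\bigl(C-\zeta(2)\bigr)$ produces an antisymmetric function $g$ with $g(1-x) = -g(x)$ and $\|S[g]\|_\infty \le \eta := \epsilon + \tfrac12|C-\zeta(2)|$. It then suffices to prove the core estimate $\|g\|_\infty \le 11\,\eta$ for every measurable antisymmetric $g$, since feeding the bounds back gives $\|L-L_2\|_\infty \le \|g\|_\infty + \tfrac12|C-\zeta(2)| \le 11\eta + \tfrac12|C-\zeta(2)| = 11\,\epsilon + 6\,|C-\zeta(2)|$.

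For the core estimate I would appeal to our computation of the continuous bounded cohomology of $\mathrm{SL}_2(\R)$. The decisive input is a \emph{quantitative bounded exactness} statement in the relevant antisymmetric invariant subcomplex: the coboundary from invariant functions of four points to invariant functions of five points has closed image, so every exact bounded cochain of this degree admits a bounded primitive whose sup-norm is controlled by a fixed multiple of its own norm. Granting this, $\delta c_g$ has a bounded primitive $u$ with $\|u\|_\infty \le K\,\eta$, whence $c_g - u = c_m$ is a genuine invariant cocycle for some measurable $m$ that solves the exact Spence--Abel equation and inherits the antisymmetry. By the Burger--Monod uniqueness theorem any such antisymmetric solution must vanish --- a nonzero one could be added to $L_2$ without changing its reflection constant, contradicting uniqueness at the constant $\zeta(2)$ --- so $m = 0$ almost everywhere and $g$ agrees with the bounded primitive $u$. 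Carrying out the primitive construction explicitly, via a homotopy operator obtained by integrating over the boundary circle, pins down the constant to $K = 11$.

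The main obstacle is precisely this quantitative bounded exactness with the sharp constant: passing from ``small coboundary'' to ``close to an honest cocycle'' requires that the relevant coboundary map have closed range and a bounded inverse on its quotient, which is not formal and is exactly the content of our bounded-cohomology computation for $\mathrm{SL}_2(\R)$. Two further technical points must be managed along the way. First, $L$ and hence $c_L$ need not be bounded a priori; this is circumvented by working throughout with the genuinely bounded coboundary $\delta c_L$ and its bounded primitive rather than with $c_L$ itself, boundedness of $L-L_2$ then being an \emph{output} of the argument. Second, one must carefully match the permutation and orientation conventions so that the five-term expression is literally the simplicial differential, and keep track of the antisymmetry normalization, which is what converts the factor $11$ into the coefficient $6$ of $|C-\zeta(2)|$.
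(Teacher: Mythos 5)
Your proposal is correct and follows essentially the same route as the paper: you reduce to the antisymmetric difference $g = L - L_{2} - \tfrac{1}{2}(C-\zeta(2))$, translate the five-term expression into the homogeneous coboundary on $G$-invariant alternating cochains over the circle, obtain a bounded primitive of the bounded $4$-cocycle $\delta c_{g}$ with sup-norm at most $(1+16/\sqrt{3})\,\|\delta c_{g}\|_{\infty}\le 11\,\|\delta c_{g}\|_{\infty}$ from the explicit integration operator of \cite{HO1}, and eliminate the residual measurable $3$-cocycle by the Burger--Monod uniqueness theorem, with the constant bookkeeping matching the paper's Continuity estimate exactly. The one ingredient you assert rather than verify --- that the explicit homotopy operator has norm bounded by $11$, which requires the $\mathfrak{S}_{3}$-invariance of $f_0^{(c)}$ to tame the unbounded cotangent in the integration bound --- is precisely the content of Lemma \ref{LemmaBounds}.
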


This stability result is a consequence of a detailed study of the perturbed Spence-Abel system
\begin{eqnarray}
\label{PerturbedSpenceAbel}L(x) -L(y) - L\left(\frac{x}{y}\right) - L\left(\frac{y-1}{x-1}\right) +  L\left(\frac{x(y-1)}{y(x-1)}\right) &=& R(x,y), \quad((x,y) \in \mathcal P_2)\\
\label{PerturbedSymmetry} L(1-x) &=& C-L(x), \quad (x \in (0,1))
\end{eqnarray}
for an arbitrary choice of constant $C \in \R$ and right hand side $R$. Here, $R$ is supposed to be a real-valued function on $\mathcal P_2 := \{g: \{(x,y)\in (0,1)^2\mid x < y\} $, and $L$ is a real-valued function on the open interval $(0,1)$. We refer to \eqref{PerturbedSpenceAbel} as the \emph{perturbed Spence-Abel equation} and to \eqref{PerturbedSymmetry} as the \emph{perturbed reflection symmetry}.

We establish the following results concerning existence, uniqueness, boundedness and regularity of solutions. Here $L^0$ denotes the space of all measurable functions modulo almost everywhere vanishing functions, $C^\omega$ denotes the space of all real-analytic functions and $\mathcal P_3 := \{(x,y,z) \in (0,1)^3\mid x<y<z\}$.

\begin{theorem}\label{IntroSummary} The perturbed Spence-Abel system \eqref{PerturbedSpenceAbel} -- \eqref{PerturbedSymmetry} has the following properties.
 \begin{description}
     \item[(Uniqueness)] For every $R \in L^0(\mathcal P_2)$ and $C\in \R$ the system \eqref{PerturbedSpenceAbel} -- \eqref{PerturbedSymmetry} has at most one solution $L^{(R, C)} \in L^0((0,1))$.
    \item[(Existence)] Given $R \in L^0(\mathcal P_2)$ and $C\in \R$, the system \eqref{PerturbedSpenceAbel} -- \eqref{PerturbedSymmetry} admits a solution if and only if $R$ satisfies the following equations for almost all $(x,y,z) \in \mathcal P_3$:
\begin{eqnarray}
 R(x,y) - R(x,z) +R(y,z) - R\left(\frac{x}{z}, \frac{y}{z}\right) - R\left(\frac{z-1}{x-1}, \frac{z-1}{y-1} \right) +R\left(\frac{x(z-1)}{z(x-1)}, \frac{y(z-1)}{z(y-1)}\right) &=& 0, \label{6termlongIntro}\\
R\left(1-y, \frac{1-y}{1-x}\right) - R(x, y) &=&0.\label{RSymmetry}
\end{eqnarray}
\item[(Boundedness)] Given $R \in L^0(\mathcal P_2)$ satisfying \eqref{6termlongIntro} and $C \in \R$, the function $L^{(R, C)} $ is essentially bounded if and only if $R$ is essentially bounded.
\item[(Regularity)] Assume that $R \in L^\infty(\mathcal P_2)$ is essentially bounded and satisfies \eqref{6termlongIntro} and let $C \in \R$ and $k \in \mathbb N \cup\{0, \infty, \omega\}$. Then $L^{(R, C)}$ is of class $C^k$ if and only if $R$ is of class $C^k$.
\item[(Continuity)]  Assume that $R_1, R_2 \in L^\infty(\mathcal P_2)$ are essentially bounded and satisfy \eqref{6termlongIntro} and let $C_1, C_2 \in \R$. Then
\[
\|L^{(R_1, C_1)}-L^{(R_2, C_2)}\|_\infty \leq 11 \cdot \|R_1-R_2\|_\infty + 6 \cdot |C_1-C_2|.
\]
\end{description}
\end{theorem}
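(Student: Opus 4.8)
The plan is to reinterpret the whole system \eqref{PerturbedSpenceAbel}--\eqref{PerturbedSymmetry} inside a bar-type resolution of $G = \mathrm{SL}_2(\R)$ by functions on tuples of points of its boundary $B = \partial\mathbb{H}^2 \cong \mathbb{P}^1(\R)$, on which $G$ acts through $\mathrm{PSL}_2(\R)$, and then to deduce each of the five assertions from our computation of the continuous bounded cohomology of $G$. Since $G$ acts transitively on positively oriented triples of distinct points of $B$, a $G$-invariant alternating function of four points of $B$ is nothing but a function of their cross-ratio, i.e.\ a function on $(0,1)$; under this identification $L$ becomes a $3$-cochain $\alpha_L$ and $R$ becomes a $4$-cochain (a function of five points, i.e.\ a function on the reduced configuration space $\mathcal P_2$). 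The perturbed Spence--Abel equation \eqref{PerturbedSpenceAbel} is then exactly the relation $d\alpha_L = \alpha_R$. In this picture the reflection symmetry \eqref{PerturbedSymmetry} is precisely the condition turning $L$ into a genuine alternating cochain, the constant $C$ recording the freedom coming from the ($1$-dimensional) space of invariant alternating cochains of lower degree; likewise \eqref{6termlongIntro} is the cocycle relation $d\alpha_R = 0$ on six points ($\cong \mathcal P_3$) and \eqref{RSymmetry} is the corresponding alternating condition, so that \eqref{6termlongIntro}--\eqref{RSymmetry} together say exactly that $R$ is an alternating $4$-cocycle. I will set this dictionary up uniformly for the coefficient regimes $L^0$, $L^\infty$ and $C^k$, using that the associated resolutions compute the measurable, bounded, resp.\ continuous cohomology of $G$ because $B$ is the amenable, doubly ergodic Poisson boundary.

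Granting the dictionary, the Uniqueness and Existence statements become pure cohomology. For Existence, one direction is the identity $d\circ d = 0$, which forces \eqref{6termlongIntro}--\eqref{RSymmetry} on every $R$ of the form $dL$; the converse is the exactness of the complex in degree $4$, i.e.\ the vanishing of the relevant cohomology group, which is part of our computation and guarantees that every alternating $4$-cocycle is a coboundary $\alpha_R = d\alpha_L$. For Uniqueness, two solutions sharing the same $R$ and the same $C$ differ by a $3$-cocycle $M$ obeying the antisymmetric reflection $M(1-x) = -M(x)$, and the claim is that no nonzero such $M$ exists. This is exactly the rigidity underlying the Burger--Monod uniqueness theorem, and in our framework it is read off from the triviality of the space of reflection-antisymmetric $3$-cocycles; in particular it gives $L^{(0,0)} = 0$, so that $(R,C)\mapsto L^{(R,C)}$ is linear.

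The Boundedness, Regularity and Continuity statements are quantitative and I will obtain them from an explicit primitive rather than an abstract isomorphism. The key step is to combine boundedly many instances of \eqref{PerturbedSpenceAbel}, together with \eqref{PerturbedSymmetry}, so as to isolate the single value $L(x)$: using the $G$-action to normalize three of the points, this expresses $L(x)$ as an alternating sum of at most eleven evaluations of $R$ plus a scalar multiple, at most six, of $C$. The consistency of these instances is guaranteed precisely by the cocycle conditions \eqref{6termlongIntro}--\eqref{RSymmetry}, so the formula gives an independent, constructive proof of Existence and yields at once the bound $\|L^{(R,C)}\|_\infty \le 11\,\|R\|_\infty + 6\,|C|$, hence Boundedness. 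The Continuity estimate is then the special case of this bound applied to the difference, since by linearity $L^{(R_1,C_1)} - L^{(R_2,C_2)} = L^{(R_1-R_2,\,C_1-C_2)}$. For Regularity, the ``only if'' direction is clear because $R = dL$ is a finite sum of precompositions of $L$ with real-analytic coordinate changes, and the ``if'' direction follows because the explicit primitive is built from $R$ by the same purely algebraic precompositions, with no genuine integration, so it preserves each class $C^k$, $k\in\mathbb N\cup\{0,\infty,\omega\}$.

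The main obstacle is exactly the construction of this primitive with simultaneous control of norm and regularity. Abstract vanishing of the relevant cohomology produces some measurable solution but gives neither a norm bound nor a regular representative, while a naive chain-level homotopy on $B$ is typically unbounded and destroys smoothness. Producing a single explicit, finite, bounded and regularity-preserving expression for $L$ in terms of $R$ and $C$, whose coefficients yield precisely the constants $11$ and $6$, and then reconciling this pointwise solution with the a.e.-defined measurable one supplied by the cohomology computation, is where the fine structure of our bounded-cohomology results for $\mathrm{SL}_2(\R)$---double ergodicity of $B$ and the explicit description of low-degree (co)cycles---will be indispensable.
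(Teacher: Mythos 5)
Your cohomological dictionary and your treatment of Existence and Uniqueness essentially coincide with the paper's: the system \eqref{PerturbedSpenceAbel}--\eqref{PerturbedSymmetry} is rewritten as $\delta^3\,{\rm ext}_4(L)={\rm ext}_5(R)$ with \eqref{6termlongIntro}--\eqref{RSymmetry} the cocycle and alternating conditions, existence follows from vanishing of the degree-$4$ measurable cohomology of the boundary complex (Pieters/Bloch), and uniqueness from the degree-$3$ vanishing together with the triviality of invariant alternating $2$-cochains coming from $3$-transitivity. That part is fine.

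The genuine gap is in your plan for Boundedness, Regularity and Continuity. You propose to isolate $L(x)$ as a \emph{finite} alternating sum of at most eleven evaluations of $R$ plus at most six copies of $C$, ``with no genuine integration''. No such finite pointwise formula can exist: if it did, the homogeneous system ($R\equiv 0$, $C=0$) would have only the zero solution pointwise, making the measurability hypothesis in the Burger--Monod uniqueness theorem superfluous --- whereas the paper explicitly records that pointwise uniqueness is open even for bounded right-hand sides. The constants also betray the difference: in the paper they are $1+\tfrac{16}{\sqrt3}\approx 10.24$ and $1+\tfrac{8}{\sqrt3}\approx 5.62$ (rounded up to $11$ and $6$ in the statement), and they arise from bounding $|T(\varphi_1,\varphi_2)|=\tfrac12\bigl|\cot(\varphi_1/2)+\cot(\varphi_2/2)\bigr|$ on a fundamental domain for an $\mathfrak S_3$-action --- not from counting terms. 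What the paper actually does is: (a) for Boundedness, invoke the vanishing of the $L^\infty$-complex in degrees $3$ and $4$ (Burger--Monod and \cite{HO1}); (b) for Regularity and Continuity, use the \emph{constructive} primitive of \cite{HO1}, which is built by genuine integration (the functions $r_c$, $F^\flat_c$ and $f_0^{(c)}$ are all integrals, the last one along orbits of the unipotent one-parameter group $N$), with $C^k$-regularity obtained from the Leibniz integral rule and the norm estimate from the cotangent bound above. Your plan as written would need to be replaced by this integral construction; the ``purely algebraic'' primitive you describe is not available.
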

Equation \eqref{6termlongIntro} is called the \emph{$6$-term equation} for $R$. Both the Spence-Abel equation and the 6-term equation admit a cohomological interpretation. We learned about the cohomological interpretation of the Spence-Abel equation from \cite{Burger/On-and-around-the-bounded-cohomology-of-SL2}, and although the idea seems to have been around for some time, this is the only explicit reference we know. Developing this idea further, we provide in the present article a  dictionary between functional equations such as \eqref{PerturbedSpenceAbel} -- \eqref{PerturbedSymmetry} and corresponding cohomological statements. This dictionary is really at the heart of our approach, and although the statements of Theorem \ref{IntroSummary} do not involve any cohomology, our proofs are entirely cohomological.

It follows from our dictionary that the existence and uniqueness statements in Theorem \ref{IntroSummary} can be reinterpreted as vanishing results in measurable cohomology in degrees $3$ and $4$ for the ${\rm PSL}_2(\R)$-action on the circle. These vanishing results can be obtained by identifying the measurable cohomology of the boundary action of ${\rm PSL}_2(\R)$ with the measurable group cohomology of ${\rm PSL}_2(\R)$. To put this last result into perspective, let us mention that the measurable cohomology of any semisimple Lie group is expected to be realizable over the Furstenberg boundary. For ${\rm PSL}_2(\C)$ this conjecture was established by Bloch \cite{Bloch}, but there are major technical difficulties in generalizing his argument to wider classes of groups. Recently, a breakthrough in this direction was achieved by Pieters \cite{Pieters}, who established the conjecture for all real-hyperbolic groups. This includes in particular the case of ${\rm PSL}_2(\R)$ which we need here (and which is by far the simplest case).

Our contribution lies mainly in establishing boundedness of the solution for a given bounded right-hand side. In cohomological terms, this amounts to proving vanishing theorems for the continuous bounded cohomology of ${\rm PSL}_2(\R)$ in degree $3$ and $4$. (Here, (continuous) bounded cohomology is understood in the sense of \cite{Gromov, Ivanov, BuMo, MonodBook}.) In degree $3$, the relevant vanishing theorem is due to Burger and Monod \cite{Burger/On-and-around-the-bounded-cohomology-of-SL2}, whereas the vanishing theorem in degree $4$ was established in \cite{HO1}. The boundedness part of Theorem \ref{IntroSummary} is essentially a straightforward application of these two theorems. 

In order to establish the final two parts of the theorem we capitalize on the fact that the proof of the vanishing theorem for the $4$th continuous bounded cohomology of ${\rm PSL}_2(\R)$ provided in \cite{HO1} is constructive. Namely, in \cite{HO1} we construct for every $4$-cocycle an explicit primitive by integration. This means that for a given bounded measurable right-hand side $R$ we solve the system \eqref{PerturbedSpenceAbel} --\eqref{PerturbedSymmetry} explicitly. The result is as follows:
\begin{theorem} Assume that  $R \in L^\infty(\mathcal P_2)$ satisfies \eqref{6termlongIntro} -- \eqref{RSymmetry}, and let $C \in \R$. Then the unique solution $L^{(R, C)}$ of \eqref{PerturbedSpenceAbel} -- \eqref{PerturbedSymmetry} is of the form
\begin{eqnarray*}
L^{(R,C)}(x) &=& C/2 - \frac{1}{2\pi}\int_0^{2\pi} c^{(R,C)}\left(e^{i\psi}, 1, -1, \mathcal C(x), -i\right) d\psi\\
&& - \int_0^{\frac{-x-1}{2x}} F_{(R,C)}^{\flat}\left(2\cdot {\rm arccot}\left(-t+\frac{1-x}{2x} \right),2\cdot {\rm arccot}\left(-t-\frac{1-x}{2x} \right)\right)dt\\
&& + \int_0^{\frac{x+1}{2}} F_{(R,C)}^{\flat}\left(2\cdot {\rm arccot}\left(-t+\frac{1-x}{2} \right),2\cdot {\rm arccot}\left(-t-\frac{1-x}{2} \right)\right)dt\\
&&-  \int_0^{\frac{1}{2}}  F_{(R,C)}^{\flat}\left(2\cdot {\rm arccot}\left(-t+\frac{1}{2} \right), 2\cdot {\rm arccot}\left(-t-\frac{1}{2} \right)\right)dt\\
&& + \int_0^{\frac x 2} F_{(R,C)}^{\flat}\left(2\cdot {\rm arccot}\left(-t+\frac x 2 \right), 2\cdot {\rm arccot}\left(-t-\frac x 2 \right)\right)dt,
\end{eqnarray*}
where $c^{(R,C)} \in L^\infty((S^1)^5)$ and $F^\flat_{(R, C)} \in L^\infty((0, 2\pi)^2)$ are certain explicit functions constructed from the function $R - C/2$ as explained in Subsection \ref{SecExplicit}.
\end{theorem}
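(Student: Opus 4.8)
The plan is to read the system \eqref{PerturbedSpenceAbel}--\eqref{PerturbedSymmetry} through the cohomological dictionary developed in the preceding sections. Under this dictionary a measurable function $L$ on $(0,1)$ corresponds to a ${\rm PSL}_2(\R)$-invariant measurable $3$-cochain $c_L$ on $(S^1)^4$, with the variable $x$ encoded as the cross-ratio of four points on the boundary circle $S^1 = \partial\mathbb H^2$, while a right-hand side $R$ on $\mathcal P_2$ corresponds to an invariant $4$-cochain $c_R$ on $(S^1)^5$. The five arguments occurring in \eqref{PerturbedSpenceAbel} are exactly the five cross-ratios obtained by deleting one of five boundary points in turn, so that the perturbed Spence-Abel equation becomes the primitive relation $d\,c_L = c_R$; likewise the six terms of \eqref{6termlongIntro} exhibit the $6$-term equation as the cocycle condition $d\,c_R = 0$, and the reflection symmetry \eqref{PerturbedSymmetry} is absorbed by passing from $R$ to the shifted data built from $R - C/2$ and recording a separate $C/2$ summand. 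Since the uniqueness part of Theorem \ref{IntroSummary} is already available, it suffices to produce one bounded solution and to recognize it as the displayed expression.

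First I would invoke the fact, recalled from \cite{HO1}, that the vanishing of the fourth continuous bounded cohomology of ${\rm PSL}_2(\R)$ is established \emph{constructively}: the proof supplies an explicit contracting homotopy on the complex of essentially bounded cochains over the boundary, producing for the bounded $4$-cocycle $c_R$ an explicit bounded $3$-cochain primitive $b$ with $d\,b = c_R$. The leading term of this homotopy is an integration of the cocycle against the rotation-invariant probability measure on $S^1$ with all other slots frozen; evaluated at the standard four-point configuration $(1, -1, \mathcal C(x), -i)$ attached by the dictionary to the parameter $x$, this is precisely the term $-\tfrac{1}{2\pi}\int_0^{2\pi} c^{(R,C)}(e^{i\psi}, 1, -1, \mathcal C(x), -i)\,d\psi$, with $\mathcal C(x)$ the boundary point for which this configuration has cross-ratio $x$. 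The remaining four integrals of $F^\flat_{(R,C)}$ come from the correction terms of the same homotopy, once the cocycle is replaced by its reduced ``flat'' form $F^\flat$ on $(0,2\pi)^2$ using the simple transitivity of ${\rm PSL}_2(\R)$ on positively oriented triples; the substitutions $\psi = 2\,\arccot(\cdot)$ and the integration limits $\tfrac{-x-1}{2x}$, $\tfrac{x+1}{2}$, $\tfrac12$ and $\tfrac x2$ are the angular parametrizations of the geodesic paths along which these corrections are integrated in \cite{HO1}.

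Concretely the steps are: (i) form the shifted cocycle $c^{(R,C)}$ and its flat reduction $F^\flat_{(R,C)}$ from $R - C/2$ as in Subsection \ref{SecExplicit}; (ii) apply the explicit homotopy of \cite{HO1} to obtain the bounded $3$-cochain primitive $b$; (iii) evaluate $b$ at the configuration indexed by $x$, thereby defining a function $\widetilde L$ on $(0,1)$, and verify that $\widetilde L$ solves \eqref{PerturbedSpenceAbel} with right-hand side $R$ and satisfies \eqref{PerturbedSymmetry} with constant $C$; and (iv) conclude by uniqueness in Theorem \ref{IntroSummary} that $\widetilde L = L^{(R,C)}$, while unwinding the homotopy into the five-line display.

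The main obstacle will be step (iii): the explicit bookkeeping that turns the abstract primitive $b$ into the stated integral. The cohomological content---that a bounded primitive exists---is guaranteed by \cite{HO1}, so the work is genuinely computational, namely identifying the frozen configuration $(e^{i\psi}, 1, -1, \mathcal C(x), -i)$ and the four $\arccot$-parametrized paths with the data produced by the homotopy, and checking that all normalizations and signs match so that $d\,b = c_R$ translates back to \eqref{PerturbedSpenceAbel} with exactly the right-hand side $R$. Tracking the half-dozen coordinate changes and the four integration limits consistently is the delicate part; everything else reduces to the dictionary and the cited vanishing theorem.
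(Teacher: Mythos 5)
Your proposal is correct and follows essentially the same route as the paper: the paper likewise defines $c^{(R,C)}={\rm ext}(R-C/2)$, invokes the constructive primitive $p_c$ from \cite{HO1} (an average of the cocycle over $S^1$ plus four correction terms $f_0^{(c)}$ given by integration along $N$-orbits), sets $L^{(R,C)}={\rm res}(p_c)+C/2$ via the dictionary and uniqueness, and then carries out exactly the bookkeeping you flag in step (iii) --- computing $\theta(x)=2\arctan(x)+\pi$ and tabulating $T(\varphi_1,\varphi_2)$, $\Phi(\varphi_1,\varphi_2)$ and $n_t.\Phi$ for the four relevant angle pairs to obtain the $\arccot$-parametrized integrals with limits $\tfrac{-x-1}{2x}$, $\tfrac{x+1}{2}$, $\tfrac12$, $\tfrac x2$. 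The only work your outline defers is that computational table, which is precisely the content of the paper's lemma preceding Proposition \ref{PropGeneralFormula}.
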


Both regularity of the solution and continuous dependence on initial conditions are consequences of these explicit formulas. In particular, applying our integration formulas to the original Spence-Abel system we obtain a new formula for Rogers' dilogarithm. 
\begin{theorem} The Rogers' dilogarithm $L_2$ is given by the formula
\begin{eqnarray*}
L_2(x) &=& \frac{\zeta(2)}{2} -  \frac{\zeta(2)}{2\pi}\left( \arctan\left(\frac{2x}{1-x^2}\right) + \frac{\pi}{2} \right)\\
&& - \int_0^{\frac{-x-1}{2x}} F_{c}^{\flat}\left(2\cdot {\rm arccot}\left(-t+\frac{1-x}{2x} \right),2\cdot {\rm arccot}\left(-t-\frac{1-x}{2x} \right)\right)dt\\
&& + \int_0^{\frac{x+1}{2}} F_{c}^{\flat}\left(2\cdot {\rm arccot}\left(-t+\frac{1-x}{2} \right),2\cdot {\rm arccot}\left(-t-\frac{1-x}{2} \right)\right)dt\\
&&-  \int_0^{\frac{1}{2}}  F_{c}^{\flat}\left(2\cdot {\rm arccot}\left(-t+\frac{1}{2} \right), 2\cdot {\rm arccot}\left(-t-\frac{1}{2} \right)\right)dt\\
&& + \int_0^{\frac x 2} F_{c}^{\flat}\left(2\cdot {\rm arccot}\left(-t+\frac x 2 \right), 2\cdot {\rm arccot}\left(-t-\frac x 2 \right)\right)dt,
\end{eqnarray*}
where 
\begin{eqnarray*}
F_c^\flat(\varphi_1, \varphi_2)&=&  \frac{\zeta(2)}{4\pi^2} \cdot \left( (3\varphi_1-2\pi)(\cos(\varphi_2)-1) - (3\varphi_2-4\pi)(\cos(\varphi_1)-1)\right) \\
&& +  \frac{3\zeta(2)}{8\pi^2} \cdot \left(\sin(\varphi_1) \cdot \log\left(\frac{\sin((\varphi_2-\varphi_1)/2)}{\sin(\varphi_1/2)}\right)- \sin(\varphi_2) \cdot \log\left(\frac{\sin((\varphi_2-\varphi_1)/2)}{\sin(\varphi_2/2)}\right)\right).
\end{eqnarray*}
\end{theorem}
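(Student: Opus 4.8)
The plan is to apply the explicit integration formula of the previous theorem to the original (unperturbed) Spence-Abel system. The starting observation is that Rogers' dilogarithm $L_2$ is the unique solution of the system \eqref{PerturbedSpenceAbel}--\eqref{PerturbedSymmetry} corresponding to the trivial right-hand side $R \equiv 0$ and the constant $C = \zeta(2)$. Indeed, $L_2$ satisfies the Spence-Abel equation \eqref{Spence-Abel} and the reflection symmetry \eqref{L2Symmetry}, and uniqueness in Theorem~\ref{IntroSummary} guarantees that it is the only such solution. Hence I would specialize the general formula for $L^{(R,C)}(x)$ to the data $(R, C) = (0, \zeta(2))$ and simplify each of the five terms.

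First I would compute the constant-boundary contribution $C/2 = \zeta(2)/2$, which is immediate. The substantial part of the first term is the average
\[
-\frac{1}{2\pi}\int_0^{2\pi} c^{(0, \zeta(2))}\left(e^{i\psi}, 1, -1, \mathcal C(x), -i\right)\, d\psi,
\]
and I would trace through the construction in Subsection~\ref{SecExplicit} of the cocycle $c^{(R,C)}$ associated to $R - C/2$. With $R \equiv 0$ the function $R - C/2$ reduces to the constant $-\zeta(2)/2$, so the resulting cocycle $c^{(0,\zeta(2))}$ should be a fully explicit elementary function on $(S^1)^5$; evaluating it at the prescribed points and integrating in $\psi$ should produce the closed form
\[
\frac{\zeta(2)}{2} - \frac{\zeta(2)}{2\pi}\left(\arctan\!\left(\frac{2x}{1-x^2}\right) + \frac{\pi}{2}\right).
\]
The appearance of $\arctan(2x/(1-x^2))$ is the signature of the boundary map $\mathcal C$ identifying $(0,1)$ with an arc of the circle, so the key is to pin down $\mathcal C(x)$ explicitly and carry out the angular integration.

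For the four integral terms I would likewise substitute $R - C/2 = -\zeta(2)/2$ into the construction of $F^{\flat}_{(R,C)}$ and read off the resulting function $F_c^{\flat}(\varphi_1, \varphi_2)$ on $(0, 2\pi)^2$. Because the only input is the constant $-\zeta(2)/2$, the integral transforms defining $F^{\flat}$ ought to collapse to elementary trigonometric and logarithmic expressions, yielding exactly the stated combination of $\cos$, $\sin$, and $\log(\sin(\cdots))$ terms with the displayed coefficients $\zeta(2)/(4\pi^2)$ and $3\zeta(2)/(8\pi^2)$. With $F_c^{\flat}$ identified, the four integrals over $t$ in the formula are then literal copies of the four integral terms in the general theorem, so no further work is needed beyond recording them.

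The main obstacle will be the bookkeeping in the first term: one must unwind the definition of $c^{(R,C)}$, the boundary parametrization $\mathcal C$, and the evaluation points $(e^{i\psi}, 1, -1, \mathcal C(x), -i)$ carefully enough to see that the $\psi$-integral of an a priori complicated $5$-point expression reduces to a single $\arctan$. This requires keeping track of the precise normalizations in Subsection~\ref{SecExplicit} and performing the angular integral exactly; the $\arctan(2x/(1-x^2))$ arises from a half-angle identity tied to $\mathcal C$, and getting the additive constant $\pi/2$ and the overall sign right is the delicate point. By contrast, the identification of $F_c^{\flat}$ and the reduction of the four $t$-integrals are essentially substitutions of a constant into already-derived formulas, and I expect those to be routine verifications.
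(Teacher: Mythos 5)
Your proposal follows exactly the paper's own route: identify $L_2 = L^{(0,\zeta(2))}$, specialize Proposition \ref{PropGeneralFormula} to this data, observe that $c^{(0,\zeta(2))} = {\rm ext}(-\zeta(2)/2)$ is the constant $-\zeta(2)/2$ times the orientation sign, evaluate the $\psi$-average at $(e^{i\psi},1,-1,\mathcal C(x),-i)$ to produce the $\arctan$ term, and push the constant cocycle through the construction of $F^\flat_c$ to obtain the explicit trigonometric--logarithmic integrand. This is precisely what Section \ref{SecRogers} does (via the basic integrals $I_1$, $I_2$, $I_3$ and Corollary \ref{Fcflat2}), so the approach is correct and essentially identical, with the remaining work being the elementary integrations you already flag as bookkeeping.
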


This article is organized as follows: In Section \ref{SecDictionary} we provide a dictionary between cohomologies of certain complexes of ${\rm PSL}_2(\R)$-invariant functions on tori and solutions of certain functional equations. Existence and uniqueness for the perturbed Spence-Abel system are then deduced in Section \ref{SecExUn} from the cohomological vanishing results mentioned above. Section \ref{SectionExplicit} is concerned with explicit integral formulas for solutions of the perturbed Spence-Abel system. In particular, we derive the desired regularity and continuity results. Finally, in Section \ref{SecRogers} we derive the above formula for Rogers' dilogarithm.

\textbf{Acknowledgement:} The authors thank Nicolas Monod and Hester Pieters for their patient explanations concerning \cite{Burger/On-and-around-the-bounded-cohomology-of-SL2} and \cite{Pieters} respectively. They are indepted to Marc Burger, Simeon Reich and Themistocles Rassias for pointing out various references concerning Hyers-Ulam stability. T.H. was partially supported by ISF grant 2021372. A.O. was supported by the European Research Council under ERC-Consolidator grant 614733 ``Deformation Spaces of Geometric Structures".

\section{Cohomological formulation of the $5$- and $6$-term equations}\label{SecDictionary}

\subsection{The boundary action of ${\rm PU}(1,1)$ and the cross ratio}
We consider the action of the group ${\rm PSL}_2(\C)$ on the extended complex plane $\widehat{\C} = \C \cup \{\infty\}$ by M\"obius transformations
\[
 \left(\begin{matrix} a&b\\ c&d \end{matrix}\right).z = \frac{az+b}{cz+d}.
\]
Given a space $X$ we denote by $X^{(n)} \subset X^n$ the subset of $n$-tuples of pairwise distinct points.
\begin{definition}
The \emph{cross ratio} $[-:-:-:-]: \widehat{\C}^{(4)} \to \C$ is the function given by
\[
[z_1:z_2:z_3:z_4] := \frac{(z_1-z_3)(z_2-z_4)}{(z_2-z_3)(z_1-z_4)} \quad ((z_1, z_2, z_3, z_4) \in \widehat{\C}^{(4)}).
\]
\end{definition}
The cross ratio as defined above is the unique ${\rm PSL}_2(\C)$-invariant function on $\widehat{\C}^{(4)}$ subject to the normalisation
 \begin{equation}\label{CRNormalization}
z = [z:1:0:\infty] =[\infty:0:1:z].
 \end{equation}
 We warn the reader that there exist several other normalization conventions for the cross ratio in the literature. We will always stick to the normalization above. With this normalization we then have
 the following cocycle identities for all $z \in \widehat{\C} \setminus \{z_1, \dots, z_4\}$:
\begin{equation}\label{cocycle}
[z_1:z_2:z_3:z_4] = [z_1:z:z_3:z_4][z:z_2:z_3:z_4]=[z_1:z_2:z_3:z][z_1:z_2:z:z_4].
\end{equation}
We will be interested in the subgroup $G := {\rm PU}(1,1)<{\rm PSL}_2(\C)$, whose elements are represented by matrices of the form
\[
g_{a,b} := \left(\begin{matrix} a&b\\ \bar b&\bar a \end{matrix}\right),
\]
with $a,b \in \C$ subject to the condition $|a|^2-|b|^2=1$. The action of $G$ on $\widehat{\C}$ preserves the unit disc $\mathbb D \subset \widehat{\C}$ and its boundary, the circle $S^1$. We refer to the action of $G$ on $S^1$ as the \emph{boundary action} of $G$.  

\subsection{Orbits of cyclically oriented points and cross ratio coordinates}

The boundary action of $G = {\rm PU}(1,1)$ on $S^1$ induces a diagonal action of $G$ on $(S^1)^k$ for every $k \in \mathbb N$, which commutes with the natural action of the symmetric group $\mathfrak S_k$ by permuting the coordinates. Both actions preserve the open subset $(S^1)^{(k)} \subset (S^1)^k$, and we are interested in the $G\times \mathfrak S_k$ orbits in $(S^1)^{(k)}$.

The $G$-action on $(S^1)^{(3)}$ is free and has exactly two orbits $(S^1)^{(3, \pm)}$ given by positively and negatively oriented triples respectively. Indeed, this follows from the fact that ${\rm PGL}_2(\R)$ acts sharply $3$-transitively on $\widehat{\R}^{(3)}$ and that the index $2$ subgroup ${\rm PSL_2}(\R)$ preserves the subset $\widehat{\R}^{(3,+)}$. Even permutations in $\mathfrak S_3$ preserve these two orbits, whereas odd permutations exchange them. 
 \begin{definition}
Let $k \geq 3$. A $k$-tuple $(z_1, \dots, z_k) \in (S^1)^{(k)}$ is called \emph{cyclically oriented} if there exist $(\theta_1, \dots, \theta_k) \in \R$ such that $z_j = e^{i\theta_j}$ for all $j = 1, \dots, k$ and
\[\theta_1<\theta_2< \dots < \theta_k < \theta_1 + 2\pi.\]
We denote by $(S^1)^{(k,+)} \subset (S^1)^{(k)}$ the set of cyclically oriented $k$-tuples.
\end{definition}
 For $k = 3$, cyclically oriented triples are the same as positively oriented triples, hence the notation is compatible with the previous one. Note that every ordered subtuple of a cyclically oriented $k$-tuple is cyclically oriented, and that a $k$-tuple is cyclically oriented if and only if every ordered $4$-subtuple is cyclically oriented. We can use these facts to characterize cyclically oriented $k$-tuples by means of cross ratios.
\begin{lemma}\label{CRParameterSpace} Let $(z_1, \dots, z_k) \in (S^1)^{k}$. Then $(z_1, \dots, z_k)$ is cyclically oriented if and only if $(z_1, z_2, z_3)$ is cyclically oriented and
\[
0 < [z_2:z_3:z_1:z_4] < [z_2:z_3:z_1:z_5] < [z_2:z_3:z_1:z_6] < \dots < [z_2:z_3:z_1:z_k] < 1.
\]
\end{lemma}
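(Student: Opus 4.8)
The plan is to realize the function $z \mapsto [z_2:z_3:z_1:z]$ as the restriction to $S^1$ of a single Möbius transformation and to read both conditions in the lemma off its monotonicity. Concretely, I would set $f(z) := [z_2:z_3:z_1:z]$ and observe that, since the cross ratio is a Möbius transformation in its last argument, $f \in {\rm PSL}_2(\C)$; a direct computation from the defining formula gives $f(z_1) = 1$, $f(z_2) = \infty$ and $f(z_3) = 0$. As $z_1, z_2, z_3$ lie on $S^1$ and $f$ carries them to $0, 1, \infty \in \widehat{\R}$, the transformation $f$ maps the circle $S^1$ onto the generalized circle $\widehat{\R}$, and hence the open disc $\mathbb D$ onto one of the two half-planes bounded by $\widehat{\R}$.

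Both implications presuppose that $(z_1, z_2, z_3)$ is cyclically oriented (for the forward direction it is an ordered subtuple of a cyclically oriented tuple, and for the backward direction it is assumed), so I would take this as a standing hypothesis. The next, and main, step is to pin down the direction in which $f$ maps the oriented circle: I claim that if $(z_1, z_2, z_3)$ is positively oriented then $f$ sends $\mathbb D$ to the upper half-plane $\mathbb H = \{w \in \C : \operatorname{Im} w > 0\}$, i.e. $f$ is orientation-preserving as a map $S^1 \to \widehat{\R}$. Since the cross ratio is $G$-invariant and $G$ acts transitively on the set $(S^1)^{(3,+)}$ of positively oriented triples, it suffices to check this for the single standard triple $z_1 = 1$, $z_2 = i$, $z_3 = -1$, for which one computes $f(0) = (1+i)/2 \in \mathbb H$ (alternatively, $\operatorname{Im} f(0)$ is a continuous nonvanishing function on the connected orbit $(S^1)^{(3,+)}$, so its sign is constant and may be evaluated at one point).

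Granting the orientation, the three arcs into which $z_1, z_2, z_3$ cut $S^1$ are mapped by $f$ onto the three arcs into which $1, \infty, 0$ cut $\widehat{\R}$, and matching endpoints shows that the arc $A$ running from $z_3$ to $z_1$ and not containing $z_2$ (the angles $\theta$ with $\theta_3 < \theta < \theta_1 + 2\pi$) is mapped strictly increasingly and bijectively onto $(0,1)$, while the remaining two arcs land in $(1, \infty)$ and $(-\infty, 0)$ respectively. Consequently $f(z_j) \in (0,1)$ if and only if $z_j \in A$, and, because $f|_A$ is an increasing bijection onto $(0,1)$, the points $z_4, \dots, z_k$ occur in strictly increasing cyclic order along $A$ (i.e. $\theta_3 < \theta_4 < \dots < \theta_k < \theta_1 + 2\pi$) if and only if $0 < f(z_4) < \dots < f(z_k) < 1$. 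Combining this with the standing orientation $\theta_1 < \theta_2 < \theta_3$ of the first three points yields exactly the equivalence between $\theta_1 < \theta_2 < \dots < \theta_k < \theta_1 + 2\pi$ and the displayed chain of inequalities; note that this argument treats all $k$ at once and does not require the reduction to $4$-subtuples.

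I expect the only genuine difficulty to be the orientation bookkeeping in the second step, namely establishing that $f$ is increasing rather than decreasing on $A$, equivalently that $f$ maps $\mathbb D$ to the upper rather than the lower half-plane. Everything else is the standard dictionary between Möbius images of arcs and their endpoints. I would therefore isolate this sign computation (via the normalized triple, or via the connectedness argument) so that the remainder is a clean transport of the cyclic order on $S^1$ to the linear order on $(0,1)$.
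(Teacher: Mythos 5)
Your proof is correct and follows essentially the same route as the paper: realize $z \mapsto [z_2:z_3:z_1:z]$ as a M\"obius transformation restricting to a homeomorphism $S^1 \to \widehat{\R}$ with $z_1, z_2, z_3 \mapsto 1, \infty, 0$, and transport the cyclic order on the circle to the linear order on $(0,1)$. The orientation computation you isolate as the main difficulty is actually dispensable: the restriction of $f$ to the arc from $z_3$ to $z_1$ avoiding $z_2$ is a continuous injection into $\widehat{\R}\setminus\{\infty\} = \R$ sending the initial endpoint to $0$ and the final endpoint to $1$, hence is strictly monotone and therefore strictly increasing, so the endpoint values alone force the monotonicity without deciding whether $f$ carries $\mathbb D$ to the upper or lower half-plane.
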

\begin{proof} Consider the function $f_0: S^1 \setminus\{z_1, z_2, z_3\} \to \R$ given by $f_0(z) = [z_2:z_3:z_1:z]$. It follows from the explicit formula for the cross ratio that $f_0$ extends to a homeomorphism $f: S^1 \to \widehat{\mathbb{R}}$ such that $f(z_1) = 1$, $f(z_2) = \infty$ and $f(z_3) = 0$. In particular, $z$ lies between $z_3$ and $z_1$ if and only if $f(z) \in (0,1)$ and $z_j$ lies before $z_{j+1}$ if and only if $f(z_j) < f(z_{j+1})$.
\end{proof}
The lemma motivates the following definition.
\begin{definition} Let $k \geq 4$ and $(z_1, \dots, z_k) \in (S^1)^{(k,+)}$. Then the numbers $\lambda_1, \dots, \lambda_{k-3} \in (0,1)$ given by
\[
\lambda_j := [z_2:z_3:z_1:z_{j+3}]
\]
are called the \emph{cross ratio coordinates} of $(z_1, \dots, z_k)$.
\end{definition}
If we abbreviate by $\mathcal P_n$ the parameter space
\[
\mathcal P_n := \{(x_1, \dots, x_n) \in (0,1)^n\mid x_1 < x_2 < \dots < x_n\} \subset (0,1)^n, 
\]
then by Lemma \ref{CRParameterSpace} cross ratio coordinates define a map
\[
\Lambda: (S^1)^{(k,+)} \to \mathcal P_{k-3}, \quad (z_1, \dots, z_k) \mapsto (\lambda_1, \dots, \lambda_{k-3}).
\]
To see that this map is onto we recall that the \emph{Cayley transform} $\mathcal C: \widehat{\C} \to \widehat{\C}$ is the M\"obius transformation given by \[\mathcal C(z) = \frac{z-i}{z+i}, \quad\mathcal C^{-1}(w) = i\frac{1+w}{1-w}.\]
The Cayley transform restricts to a bijection $\mathcal C: \widehat{\R} \to S^1$ and intertwines the ${\rm PSL}_2(\R)$-action on $\widehat{\R}$ and the $G$-action on $S^1$. Since $\mathcal C(\infty, 0,1) = (1, -1, -i)$ we can reformulate the identity \eqref{CRNormalization} as
 \[
\mathcal C^{-1}(z) = [\infty:0:1:\mathcal C^{-1}(z)] = [\mathcal C(\infty): \mathcal C(0): \mathcal C(1): \mathcal C(\mathcal C^{-1}(z))] =  [1:-1:-i:z] \in \R \setminus\{0,1\}.
\]
\begin{lemma}\label{LemmaRestriction} Let $(\lambda_1, \dots, \lambda_{k-3}) \in \mathcal P_{k-3}$. Then
\[
(z_1, \dots, z_k) := (-i,1,-1,\mathcal C(\lambda_1), \dots, \mathcal C(\lambda_{k-3}))\in (S^1)^{(k,+)}
\]
and $(\lambda_1, \dots, \lambda_{k-3}) = \Lambda(z_1, \dots, z_k)$. In particular, $\Lambda$ is onto.
\end{lemma}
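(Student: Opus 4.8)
The plan is to reduce both assertions of the lemma to the characterization of cyclic orientation in Lemma \ref{CRParameterSpace}, fed by the cross ratio identity $[1:-1:-i:z] = \mathcal C^{-1}(z)$ established just above. Throughout I write $z_1 = -i$, $z_2 = 1$, $z_3 = -1$ and $z_{j+3} = \mathcal C(\lambda_j)$ for $j = 1, \dots, k-3$, so that $(z_1, \dots, z_k)$ is the tuple in the statement.

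First I would verify the hypothesis of Lemma \ref{CRParameterSpace} concerning the base triple, namely that $(z_1, z_2, z_3) = (-i, 1, -1)$ is cyclically (equivalently positively) oriented. Writing $-i = e^{i(-\pi/2)}$, $1 = e^{i\cdot 0}$ and $-1 = e^{i\pi}$ exhibits angles satisfying $-\pi/2 < 0 < \pi < -\pi/2 + 2\pi$, which is exactly the defining condition.

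Next I would compute the candidate cross ratio coordinates. By definition these are $[z_2:z_3:z_1:z_{j+3}] = [1:-1:-i:\mathcal C(\lambda_j)]$, and plugging $z = \mathcal C(\lambda_j)$ into the derived identity gives $[1:-1:-i:\mathcal C(\lambda_j)] = \mathcal C^{-1}(\mathcal C(\lambda_j)) = \lambda_j$. Since by hypothesis $0 < \lambda_1 < \dots < \lambda_{k-3} < 1$, the chain
\[
0 < [z_2:z_3:z_1:z_4] < [z_2:z_3:z_1:z_5] < \dots < [z_2:z_3:z_1:z_k] < 1
\]
holds. Combined with the cyclic orientation of $(z_1, z_2, z_3)$, Lemma \ref{CRParameterSpace} yields that $(z_1, \dots, z_k)$ is cyclically oriented, i.e. lies in $(S^1)^{(k,+)}$; in particular its entries are pairwise distinct. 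The very same computation shows $\Lambda(z_1, \dots, z_k) = (\lambda_1, \dots, \lambda_{k-3})$, and surjectivity of $\Lambda$ is then immediate since $(\lambda_1, \dots, \lambda_{k-3}) \in \mathcal P_{k-3}$ was arbitrary.

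This is essentially a verification, and I do not expect any genuine obstacle. The one point requiring care is bookkeeping with the index order: the identity must be applied with the base triple in the exact order $(1, -1, -i)$ appearing on the left of $[1:-1:-i:z]$, which matches the slots $(z_2, z_3, z_1)$ rather than $(z_1, z_2, z_3)$ — this is precisely why the base points were chosen as $(-i, 1, -1)$. The distinctness needed for the cross ratios to be well defined is automatic once cyclic orientation is established, since $\lambda_j \in (0,1)$ forces $\mathcal C(\lambda_j) \notin \{-i, 1, -1\}$ and $\mathcal C$ is injective.
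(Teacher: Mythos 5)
Your argument is correct and is essentially identical to the paper's proof: both check that $(-i,1,-1)$ is cyclically oriented, compute $[z_2:z_3:z_1:z_j] = [1:-1:-i:\mathcal C(\lambda_{j-3})] = \lambda_{j-3}$ via the identity $\mathcal C^{-1}(z) = [1:-1:-i:z]$, and conclude with Lemma \ref{CRParameterSpace}. Your additional remarks on index bookkeeping and distinctness are fine but not needed beyond what the paper records.
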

\begin{proof} By definition, $(z_1, z_2, z_3) = (-i, 1, -1)$ is cyclically oriented and for every $j = 4, \dots, k$ we have 
\[
[z_2:z_3:z_1: z_j] = [1:-1:-i:z_j] = \mathcal C^{-1}(z_j) = \lambda_{j-3}.
\]
Thus the lemma follows from Lemma \ref{CRParameterSpace}.
\end{proof}
Since $G$ acts sharply $3$-transitively on $(S^1)^{(3,+)}$, it follows from Lemma \ref{CRParameterSpace} that $\Lambda$ induces a bijection $G\backslash (S^1)^k \to \mathcal P_{k-3}$. In particular, $G$-invariant functions on $(S^1)^{(k,+)}$ correspond bijectively to functions on $\mathcal P_{k-3}$ via $\Lambda$.
\begin{lemma}\label{AlternatingCRC}
If $(z_1, \dots, z_k) \in (S^1)^{(k,+)}$ has cross ratio coordinates $(\lambda_1, \dots, \lambda_{k-3})$, then $(z_k, z_1, \dots, z_{k-1})$ has cross ratio coordinates $(\widetilde{\lambda_1}, \dots, \widetilde{\lambda_{k-3}})$ given by $\widetilde{\lambda_1} = 1-\lambda_{k-3}$ and
\[
\widetilde{\lambda_j}  =  \frac{1-\lambda_{k-3}}{1-\lambda_{j-1}} \quad (j=2, \dots, k-3).
\]
\end{lemma}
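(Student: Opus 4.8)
The plan is to reduce the entire statement to one elementary cross-ratio computation by exploiting the $\mathrm{PSL}_2(\C)$-invariance of the cross ratio. First I would observe that a cyclic permutation of a cyclically oriented tuple is again cyclically oriented, so that $(z_k, z_1, \dots, z_{k-1}) \in (S^1)^{(k,+)}$ and its cross ratio coordinates $\widetilde{\lambda_1}, \dots, \widetilde{\lambda_{k-3}}$ are well defined. Unravelling the definition of cross ratio coordinates for this new tuple, whose $m$-th entry is $z_{m-1}$ (read cyclically, with first entry $z_k$), the $j$-th coordinate is $\widetilde{\lambda_j} = [z_1:z_2:z_k:z_{j+2}]$, because the second, third, first and $(j+3)$-th entries of $(z_k, z_1, \dots, z_{k-1})$ are $z_1$, $z_2$, $z_k$ and $z_{j+2}$ respectively.

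Next I would normalize using sharp $3$-transitivity of the $G$-action (equivalently of $\mathrm{PSL}_2(\C)$): there is a unique M\"obius transformation $\phi$ with $\phi(z_2) = \infty$, $\phi(z_3) = 0$ and $\phi(z_1) = 1$. Invariance of the cross ratio together with the normalization $[\infty:0:1:w] = w$ from \eqref{CRNormalization} then gives $\phi(z_m) = [z_2:z_3:z_1:z_m] = \lambda_{m-3}$ for all $m = 4, \dots, k$; in particular $\phi(z_k) = \lambda_{k-3}$. Applying $\phi$ to the four arguments of $\widetilde{\lambda_j}$ and invoking invariance once more yields $\widetilde{\lambda_j} = [1:\infty:\lambda_{k-3}:\phi(z_{j+2})]$.

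The final step is the elementary evaluation of a cross ratio one of whose arguments is $\infty$: directly from the defining formula one has $[a:\infty:c:d] = (a-c)/(a-d)$, so that $\widetilde{\lambda_j} = (1-\lambda_{k-3})/(1-\phi(z_{j+2}))$. For $j = 1$ the moving argument is $z_3$, whence $\phi(z_{j+2}) = \phi(z_3) = 0$ and $\widetilde{\lambda_1} = 1 - \lambda_{k-3}$; for $j = 2, \dots, k-3$ it is $z_{j+2}$ with $\phi(z_{j+2}) = \lambda_{(j+2)-3} = \lambda_{j-1}$, giving $\widetilde{\lambda_j} = (1-\lambda_{k-3})/(1-\lambda_{j-1})$, exactly as claimed.

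There is no genuine obstacle here; the argument is essentially bookkeeping, and the only points requiring care are the index shift $w_{j+3} = z_{j+2}$ produced by the cyclic permutation and the need to treat $j = 1$ separately, since in that case the moving argument is the normalized point $z_3$ rather than one of the points $z_m$ carrying a cross ratio coordinate. One must also keep the positional conventions of the cross ratio straight throughout, as the normalization $[\infty:0:1:w] = w$ pins down precisely in which slots the images $\infty$, $0$ and $1$ must appear.
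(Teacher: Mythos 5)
Your proof is correct and follows the same essential reduction as the paper's: both arguments first identify the new cross ratio coordinates as $\widetilde{\lambda_j} = [z_1:z_2:z_k:z_{j+2}]$ and then express this in terms of the original $\lambda$'s. The only difference is in the last step: the paper evaluates $[z_1:z_2:z_k:z_{j+2}]$ by combining a standard symmetry of the cross ratio with the cocycle identity \eqref{cocycle}, treating $j=1$ and $j\geq 2$ separately, whereas you normalize by the M\"obius transformation sending $(z_2,z_3,z_1)$ to $(\infty,0,1)$ so that $\phi(z_m)=\lambda_{m-3}$ and then read off $[1:\infty:\lambda_{k-3}:\phi(z_{j+2})]=(1-\lambda_{k-3})/(1-\phi(z_{j+2}))$ directly from the defining formula, which handles all $j$ uniformly (with $\phi(z_3)=0$ giving the $j=1$ case). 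This is a cosmetic rather than substantive difference, and your bookkeeping of the index shift and of the slot conventions in the normalization $[\infty:0:1:w]=w$ is accurate.
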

\begin{proof} By the standard symmetries of the cross ratio we have
\[
\widetilde{\lambda_1} = [z_1:z_2:z_k:z_3] = 1-[z_2:z_3:z_1:z_k] = 1-\lambda_{k-3},
\]
and for $j = 2, \dots, k-3$ we deduce from the cocycle identity \eqref{cocycle} that 
\begin{eqnarray*}
\widetilde{\lambda_j} &=& [z_1:z_2:z_k:z_{j+2}] =  [z_1:z_2:z_k:z_{3}][z_1:z_2:z_3:z_{j+2}]\\ &=& (1-[z_2:z_3:z_1:z_k])\cdot \frac{1}{1-[z_2:z_3:z_1:z_{j+2}]}= \frac{1-\lambda_{k-3}}{1-\lambda_{j-1}}.\qedhere
\end{eqnarray*}
\end{proof}

\subsection{Alternating functions in cross ratio coordinates} Our next goal is to single out the alternating $G$-invariant functions on $(S^1)^{(k)}$ in terms of their cross ratio coordinates. Here a function $f: (S^1)^{(k)} \to \R$ is called \emph{alternating} provided 
\[
f(z_{\sigma(1)}, \dots, z_{\sigma(k)}) = (-1)^\sigma f(z_1, \dots, z_k) \quad(\sigma \in \mathfrak S_k, (z_1, \dots, z_k) \in (S^1)^{(k)}),
\]
and we denote by $\mathcal F_{\rm alt}((S^1)^{(k)})$ the space of all such functions. 

We observe that every $(x_1, \dots, x_k) \in (S^1)^{(k)}$ can be permuted into a cyclically oriented $k$-tuple. If we denote by $C_k \subset \mathfrak S_k$ the cyclic group generated by the cycle $\sigma_k := (1 \, 2 \dots k)$, then this induces a bijection
\begin{equation}\label{Permutation+}
(S^1)^{(k)}/\mathfrak S_k \cong (S^1)^{(k,+)}/C_k.
\end{equation}
\begin{proposition}\label{Parameters} For $k \geq 4$ the following spaces are in bijection:
\begin{enumerate}[(1)]
\item the space $\mathcal F_{\rm alt}((S^1)^{(k)})^G$ of $G$-invariant alternating functions $c: (S^1)^{(k)} \to \R$;
\item the space $\mathcal F_{\rm alt}((S^1)^{(k,+)})^G$  of $G$-invariant functions $c_0: (S^1)^{(k, +)} \to \R$ satsfying
\begin{equation}\label{AlternatingSemiReduced}
c_0(z_k, z_1, \dots, z_{k-1}) = (-1)^{k+1} c_0(z_1, \dots, z_k).
\end{equation}
\item the space $\mathcal F_{\rm alt}(\mathcal P_{k-3})$ of functions $f: \mathcal P_{k-3} \to \R$ satisfying
\begin{equation}\label{AlternatingFinal}
f(\lambda_1, \dots, \lambda_{k-3}) = (-1)^{k+1}\cdot f\left(1-\lambda_{k-3}, \frac{1-\lambda_{k-3}}{1-\lambda_1}, \dots, \frac{1-\lambda_{k-3}}{1-\lambda_{k-4}}\right).
\end{equation}
\end{enumerate}
The bijection $(1) \leftrightarrow (2)$ is given by restriction, while the bijection $(2) \leftrightarrow (3)$ is induced by the cross ratio coordinates $\Lambda$.
\end{proposition}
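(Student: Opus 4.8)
The plan is to establish the two bijections separately, observing that the equivalence $(2)\leftrightarrow(3)$ is essentially a bookkeeping consequence of the cross ratio machinery already set up, while the equivalence $(1)\leftrightarrow(2)$ carries the genuine content. For $(1)\leftrightarrow(2)$ I would first check that restriction is well-defined: if $c$ is $G$-invariant and alternating, then $c_0 := c|_{(S^1)^{(k,+)}}$ is clearly $G$-invariant, and the rearrangement $(z_1,\dots,z_k)\mapsto(z_k,z_1,\dots,z_{k-1})$ is realized by a power of the $k$-cycle $\sigma_k$, which preserves cyclic orientation and has sign $(-1)^{k-1}=(-1)^{k+1}$; applying the alternating property to this particular permutation gives \eqref{AlternatingSemiReduced}. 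To construct the inverse, given $c_0$ satisfying \eqref{AlternatingSemiReduced}, I would extend it by choosing, for each $(x_1,\dots,x_k)\in(S^1)^{(k)}$, a permutation $\tau\in\mathfrak{S}_k$ with $(x_{\tau(1)},\dots,x_{\tau(k)})\in(S^1)^{(k,+)}$ and setting
\[
c(x_1,\dots,x_k) := (-1)^\tau\, c_0(x_{\tau(1)},\dots,x_{\tau(k)}).
\]

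The main obstacle is the well-definedness of this extension, since $\tau$ is far from unique. The cyclically oriented rearrangements of a fixed tuple constitute a single $C_k$-orbit under cyclic rotation, in accordance with the identification \eqref{Permutation+}, so any two admissible permutations produce oriented representatives differing by a power of $\sigma_k$. I would therefore have to show that cyclically rotating the oriented representative by one step leaves the right-hand side unchanged; this is exactly the content of \eqref{AlternatingSemiReduced}, since the rotation is a $k$-cycle contributing a sign $(-1)^{k+1}$ that precisely cancels the factor $(-1)^{k+1}$ by which $c_0$ transforms, and iterating covers all powers of $\sigma_k$. Once well-definedness is secured, $G$-invariance of $c$ is immediate because the $G$-action commutes with coordinate permutations and preserves cyclic orientation, and the full alternating property follows formally: for arbitrary $\rho\in\mathfrak{S}_k$ one may use $\rho^{-1}\tau$ as an admissible permutation for $(x_{\rho(1)},\dots,x_{\rho(k)})$, and the signs multiply to give the factor $(-1)^\rho$. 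That restriction and extension are mutually inverse is then clear.

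For $(2)\leftrightarrow(3)$ I would invoke that $\Lambda$ already induces a bijection between $G$-invariant functions on $(S^1)^{(k,+)}$ and functions on $\mathcal{P}_{k-3}$, via Lemma \ref{CRParameterSpace} and Lemma \ref{LemmaRestriction} together with sharp $3$-transitivity, assigning to $c_0$ the function $f(\lambda_1,\dots,\lambda_{k-3}) = c_0(-i,1,-1,\mathcal{C}(\lambda_1),\dots,\mathcal{C}(\lambda_{k-3}))$. It then only remains to translate the symmetry condition. Reading off the cross ratio coordinates of $(z_k,z_1,\dots,z_{k-1})$ from Lemma \ref{AlternatingCRC} and substituting into \eqref{AlternatingSemiReduced} turns the left-hand side into $f(1-\lambda_{k-3},\tfrac{1-\lambda_{k-3}}{1-\lambda_1},\dots,\tfrac{1-\lambda_{k-3}}{1-\lambda_{k-4}})$ and the right-hand side into $(-1)^{k+1}f(\lambda_1,\dots,\lambda_{k-3})$; rearranging (and using $((-1)^{k+1})^2=1$) yields precisely \eqref{AlternatingFinal}. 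Hence $c_0$ satisfies \eqref{AlternatingSemiReduced} if and only if the associated $f$ satisfies \eqref{AlternatingFinal}, which gives the bijection $(2)\leftrightarrow(3)$. I expect no difficulty in this last step beyond correct index bookkeeping in the substitution.
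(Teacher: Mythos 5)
Your proposal is correct and follows essentially the same route as the paper: restriction/extension between $\mathfrak S_k$-alternating functions on $(S^1)^{(k)}$ and $C_k$-equivariant functions on $(S^1)^{(k,+)}$ via the identification \eqref{Permutation+}, followed by the translation of \eqref{AlternatingSemiReduced} into \eqref{AlternatingFinal} through Lemma \ref{AlternatingCRC}. The only difference is that you spell out the well-definedness of the extension and the sign bookkeeping, which the paper leaves implicit.
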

\begin{proof} By \eqref{Permutation+}, $\mathfrak S_k$-equivariant functions on $(S^1)^{(k)}$ correspond via restriction to $C_k$-equivariant functions on $(S^1)^{(k,+)}$. Now $C_k$-equivariance is equivalent to \eqref{AlternatingSemiReduced} since $C_k$ is generated by the $k$-cycle $\sigma_k$. This shows that restriction defines a $G$-equivariant bijection $\mathcal F_{\rm alt}((S^1)^{(k)}) \to \mathcal F_{\rm alt}((S^1)^{(k,+)})$.

Moreover, we know from the previous subsection that cross ratio coordinates induce a bijection $\mathcal F((S^1)^{(k,+)})^G \to \mathcal F(P_n)$, and by Lemma \ref{AlternatingCRC} this bijection translates the alternating condition \eqref{AlternatingSemiReduced} into \eqref{AlternatingFinal}.
\end{proof}
Proposition \ref{Parameters} and Lemma \ref{LemmaRestriction} yield for every $k \geq 4$ mutually inverse bijections given by
\begin{eqnarray*}
{\rm ext}_k: \mathcal F_{\rm alt}(\mathcal P_{k-3}) \to \mathcal F_{\rm alt}((S^1)^{(k)})^G, &&{\rm ext}_k(f)(z_1, \dots, z_k) = f(\Lambda(z_1, \dots, z_k))\quad ((z_1, \dots, z_k) \in (S^1)^{(k, +)}),\\
{\rm res}_k: \mathcal F_{\rm alt}((S^1)^{(k)})^G \to \mathcal F_{\rm alt}(\mathcal P_{k-3}), && {\rm res}_k(c)(\lambda_1, \dots, \lambda_{k-3})= c(-i, 1, -1, \mathcal C(\lambda_1), \dots, \mathcal C(\lambda_{k-3})),
\end{eqnarray*}
which we refer to as \emph{extension} and \emph{restriction maps} respectively. When $k$ is clear from the context we simply write ${\rm ext}$ and ${\rm res}$ for ${\rm ext}_k$ and ${\rm res}_k$.

We will be specifically interested in the space $\mathcal F_{\rm alt}(\mathcal P_k)$ for $k\leq 3$ and the corresponding extension and restriction maps. Explicitly, these are given by
\begin{eqnarray}
\label{P1}\mathcal F_{\rm alt}(\mathcal P_1) &=& \{f:(0,1) \to \R\mid f(x) = -f(1-x)\},\\
\label{P2}\mathcal F_{\rm alt}(\mathcal P_2) &=& \left\{g: \{(x,y)\in (0,1)^2\mid x < y\} \to \R\mid g(x, y) = g\left(1-y, \frac{1-y}{1-x}\right)\right\},\\
\mathcal F_{\rm alt}(\mathcal P_3) &=& \left\{h: \{(x,y,z)\in (0,1)^3\mid x < y < z\} \to \R\mid h(x, y, z) = -h\left(1-z, \frac{1-z}{1-x}, \frac{1-z}{1-y}\right)\right\}.
\end{eqnarray}
Note in particular that $L:(0,1) \to \R$ satisfies the perturbed reflection symmetry
\begin{equation}\label{PerturbedSymmetry0}
L(x) = -L(1-x)
\end{equation}
with constant $C=0$ if and only if $L\in \mathcal F_{\rm alt}(\mathcal P_1)$.

\subsection{The homogeneous differential in cross ratio coordinates} Denote by $\mathcal F((S^1)^{(k)})$ the space of real-valued functions on $(S^1)^{(k)}$. Recall that the \emph{homogeneous differential} is given by
\[
\delta^n : \mathcal{F}((S^{1})^{(n+1)}) \to \mathcal{F}((S^{1})^{(n+2)}), \quad (\delta^n c)(z_{0}, \ldots, z_{n+1}) = \sum_{j=0}^{n+1} (-1)^{j} \, c(z_{0}, \ldots, \widehat{z_{j}}, \ldots, z_{n+1}).
\]
It is $G$-invariant, satisfies $\delta^n \circ \delta^{n-1} = 0$ and intertwines the corresponding subspaces of alternating functions. For $k \geq 4$ we define the \emph{reduced differential}
\[
\tau^k: \mathcal F_{\rm alt}(\mathcal P_{k-2}) \to  \mathcal F_{\rm alt}(\mathcal P_{k-1}), \quad f \mapsto {\rm res}_{k+2} \circ \delta^k \circ {\rm ext}_{k+1}(f)
\]
and obtain the following commuting diagram in which vertical arrows are isomorphisms:
\begin{equation}\label{DefTau}
\begin{xy}\xymatrix{
\dots \ar[rr]^0 &&  \mathcal{F}_{\rm alt}((S^{1})^{(4)})^G  \ar[rr]^{\delta^3} \ar[d]^{{\rm res}_4}&&   \mathcal{F}_{\rm alt}((S^{1})^{(5)})^G \ar[rr]^{\delta^4}  \ar[d]^{{\rm res}_5}&&   \mathcal{F}_{\rm alt}((S^{1})^{(6)})^G \ar[rr]^{\delta^5} \ar[d]^{{\rm res}_6}&&\dots\\ &&\mathcal F_{\rm alt}(\mathcal P_1) \ar[rr]^{\tau^3}&&\mathcal F_{\rm alt}(\mathcal P_2) \ar[rr]^{\tau^4}&&\mathcal F_{\rm alt}(\mathcal P_3)  \ar[rr]^{\tau^5}&& \dots
}\end{xy}\end{equation}
\begin{remark} Instead of considering arbitrary real-valued functions, we could as well consider bounded functions, or functions of a given regularity (measurable, continuous, $C^k$, smooth). In each of these cases we obtain a similar diagram as in \eqref{DefTau}. In the case of measurable (bounded) functions we can also pass to the respective quotients by identifying functions which agree almost everywhere. The following complex will be of particular importance for us later on.
\begin{equation}\label{DefTauLInftyDownstairs}
\begin{xy}\xymatrix{
\dots \ar[rr]^0 &&   L^\infty_{\rm alt}((S^{1})^{(4)})^G  \ar[rr]^{\delta^3} \ar[d]^{{\rm res}_4}&&    L^\infty_{\rm alt}((S^{1})^{(5)})^G \ar[rr]^{\delta^4}  \ar[d]^{{\rm res}_5}&&    L^\infty_{\rm alt}((S^{1})^{(6)})^G \ar[rr]^{\delta^5} \ar[d]^{{\rm res}_6}&&\dots\\ && L^\infty_{\rm alt}(\mathcal P_1) \ar[rr]^{\tau^3}&& L^\infty_{\rm alt}(\mathcal P_2) \ar[rr]^{\tau^4}&& L^\infty_{\rm alt}(\mathcal P_3)  \ar[rr]^{\tau^5}&& \dots
}\end{xy}\end{equation}
\end{remark}
\begin{proposition} \label{6termexplicit} For $k = 3,4$ the reduced differential $\tau^k$ is given as follows.
\begin{enumerate}
\item Let $f \in \mathcal F_{\rm alt}(\mathcal P_1)$. Then
\[
\tau^3(f)(x,y) = f(x) -f(y) - f\left(\frac{x}{y}\right) - f\left(\frac{y-1}{x-1}\right) +  f\left(\frac{x(y-1)}{y(x-1)}\right).
\]
\item Let $g \in \mathcal F_{\rm alt}(\mathcal P_2)$. Then
\[
\tau^4(g)(x,y,z) = -g(x,y) + g(x,z) - g(y,z) + g\left(\frac{x}{z}, \frac{y}{z}\right) + g\left(\frac{z-1}{x-1}, \frac{z-1}{y-1} \right) -g\left(\frac{x(z-1)}{z(x-1)}, \frac{y(z-1)}{z(y-1)}\right).
\]
\end{enumerate}
\end{proposition}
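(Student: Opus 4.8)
The plan is to unwind the definition $\tau^k = {\rm res}_{k+2} \circ \delta^k \circ {\rm ext}_{k+1}$ and compute directly. For part (i) we take $k=3$: given $f \in \mathcal F_{\rm alt}(\mathcal P_1)$, the extension $\widehat{f} := {\rm ext}_4(f)$ is the $G$-invariant alternating function on $(S^1)^{(4)}$ that on a cyclically oriented quadruple $(z_1,z_2,z_3,z_4)$ equals $f([z_2:z_3:z_1:z_4])$. Applying $\delta^3$ and restricting via ${\rm res}_5$ amounts to evaluating $\delta^3\widehat{f}$ at the standard base quintuple $(-i,1,-1,\mathcal C(x),\mathcal C(y))$, which is cyclically oriented by Lemma \ref{LemmaRestriction}. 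Since every ordered subtuple of a cyclically oriented tuple is again cyclically oriented, each of the five terms of $\delta^3\widehat{f}$ is $f$ evaluated at the single cross ratio coordinate of a $4$-subtuple, so the whole computation reduces to evaluating five cross ratios. For part (ii) the same recipe with $k=4$, ${\rm ext}_5$, $\delta^4$ and ${\rm res}_6$ produces six terms, each $g$ evaluated at the two cross ratio coordinates of a $5$-subtuple of $(-i,1,-1,\mathcal C(x),\mathcal C(y),\mathcal C(z))$.

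The key simplification is that the cross ratio is invariant under the M\"obius map $\mathcal C^{-1}$, so I would replace every base point by its image under $\mathcal C^{-1}$ before computing. By the identities recorded just before Lemma \ref{LemmaRestriction} these images are $\mathcal C^{-1}(1)=\infty$, $\mathcal C^{-1}(-1)=0$, $\mathcal C^{-1}(-i)=1$ and $\mathcal C^{-1}(\mathcal C(x))=x$; thus every cross ratio that arises involves only points among $\{1,\infty,0,x,y,z\}$ and is elementary to evaluate from the defining formula together with the normalization \eqref{CRNormalization}. For instance one finds $[\infty:0:1:x]=x$, $[\infty:x:1:y]=\tfrac{x-y}{x-1}$, $[0:x:1:y]=\tfrac{x-y}{y(x-1)}$ and $[0:x:\infty:y]=1-\tfrac{x}{y}$, with the remaining values obtained by substituting $z$ for $y$.

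The final step is to bring the resulting arguments into the normal form of the statement using the alternating symmetries \eqref{P1} and \eqref{P2}. For $\tau^3$ one applies $f(1-u)=-f(u)$ to the three cross ratios that emerge in the form $1-(\cdot)$, namely $1-\tfrac{x}{y}$, $1-\tfrac{y-1}{x-1}$ and $1-\tfrac{x(y-1)}{y(x-1)}$, turning them into the claimed terms. For $\tau^4$ one applies the two-variable symmetry $g(\lambda_1,\lambda_2)=g\bigl(1-\lambda_2,\tfrac{1-\lambda_2}{1-\lambda_1}\bigr)$ to each term, rewriting pairs such as $\bigl(\tfrac{x-y}{x-1},\tfrac{x-z}{x-1}\bigr)$ as $\bigl(\tfrac{z-1}{x-1},\tfrac{z-1}{y-1}\bigr)$, and likewise for the others; the three terms whose first base point is $\infty$ and second is $0$ reduce immediately via $[\infty:0:1:\cdot]=(\cdot)$ to $g$ of the original coordinates. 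Collecting the signs $(-1)^j$ from the differential then yields the two displayed formulas.

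I expect no conceptual obstacle here; the work is entirely bookkeeping. The two points requiring care are (a) reconciling the index conventions — the differential $\delta^n$ numbers its arguments from $0$ whereas the restriction tuple is numbered from $1$ — so that each omitted-point subtuple and its cross ratio coordinates are correctly identified, and (b) selecting, for each term, the right instance of the symmetry relation so that the sometimes awkward cross ratio expressions collapse to the clean arguments in the statement.
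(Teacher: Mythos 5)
Your proposal is correct and follows essentially the same route as the paper: unwind $\tau^k = {\rm res}\circ\delta\circ{\rm ext}$ at the base tuple $(-i,1,-1,\mathcal C(x),\dots)$, transport the cross ratios to $\widehat{\R}$ via $\mathcal C^{-1}$, and normalize the resulting arguments with the alternating symmetries \eqref{P1}--\eqref{P2}. The intermediate cross ratio values and symmetry applications you record (e.g. $[0:x:\infty:y]=1-\tfrac{x}{y}$ and the reduction of $\bigl(\tfrac{x-y}{x-1},\tfrac{x-z}{x-1}\bigr)$ to $\bigl(\tfrac{z-1}{x-1},\tfrac{z-1}{y-1}\bigr)$) all match the paper's computation.
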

\begin{proof} (i) Since for cyclically oriented $(z_0, \dots, z_4)$ we have
\begin{eqnarray*}
 (\delta \circ {\rm ext})(f)(z_0, \dots, z_4) &=& {\rm ext}(f)(z_1, z_2, z_3, z_4) - {\rm ext}(f)(z_0, z_2, z_3, z_4)+{\rm ext}(f)(z_0, z_1, z_3, z_4)\\
 &&-{\rm ext}(f)(z_0, z_1, z_2, z_4)+{\rm ext}(f)(z_0, z_1, z_2, z_3)\\
 &=& f([z_2:z_3:z_1:z_4])- f([z_2:z_3:z_0:z_4])+ f([z_1:z_3:z_0:z_4])\\
 &&- f([z_1:z_2:z_0:z_4])+ f([z_1:z_2:z_0:z_3]).
\end{eqnarray*}
Using \eqref{P1} we deduce that
\begin{eqnarray*}
\tau^3(f)(x,y) &=& (\delta \circ {\rm ext})(f)(-i, 1, -1, \mathcal C(x), \mathcal C(y))\\
&=& f([-1:\mathcal C(x): 1: \mathcal C(y)])-f([-1:\mathcal C(x): -i: \mathcal C(y)])+f([1:\mathcal C(x): -i: \mathcal C(y)])\\
&&-f([1:-1:-i: \mathcal C(y)])+f([1:-1:-i: \mathcal C(x)])\\
&=& f([0:x:\infty:y])-f([0:x:1:y]) + f([\infty:x:1:y]) - f(y) + f(x)\\
&=& f\left(1- \frac x y\right) - f\left(1-\frac{x(y-1)}{y(x-1)}\right) + f\left(1-\frac{y-1}{x-1}\right)-f(y) + f(x)\\
&=& -f\left(\frac{x}{y}\right) +f\left(\frac{x(y-1)}{y(x-1)}\right) - f\left(\frac{y-1}{x-1}\right) - f(y) +f(x).
\end{eqnarray*}
(ii) For cyclically oriented $(z_0, \dots, z_5)$ we have
\begin{eqnarray*}
 (\delta \circ {\rm ext})(g)(z_0, \dots, z_5) &=&{\rm ext}(g)(z_1, z_2, z_3, z_4, z_5) - {\rm ext}(g)(z_0, z_2, z_3, z_4, z_5)+{\rm ext}(g)(z_0, z_1, z_3, z_4, z_5)\\
 &&-{\rm ext}(g)(z_0, z_1, z_2, z_4, z_5)+{\rm ext}(g)(z_0, z_1, z_2, z_3, z_5) - {\rm ext}(g)(z_0, z_1, z_2, z_3, z_4)\\
 &=& g([z_2:z_3:z_1:z_4], [z_2:z_3:z_1:z_5])-g([z_2:z_3:z_0:z_4], [z_2:z_3:z_0:z_5])\\
 &&+g([z_1:z_3:z_0:z_4], [z_1:z_3:z_0:z_5])-g([z_1:z_2:z_0:z_4], [z_1:z_2:z_0:z_5])\\
 &&+g([z_1:z_2:z_0:z_3], [z_1:z_2:z_0:z_5])-g([z_1:z_2:z_0:z_3], [z_1:z_2:z_0:z_4]),
 \end{eqnarray*}
 and thus, using \eqref{P2},
 \begin{eqnarray*}
 \tau^4(g)(x,y, z) &=& (\delta \circ {\rm ext})(g)(-i, 1, -1, \mathcal C(x), \mathcal C(y), \mathcal C(z))\\
 &=& g([-1:\mathcal C(x):1:\mathcal C(y)], [-1:\mathcal C(x):1:\mathcal C(z)])-g([-1:\mathcal C(x):-i:\mathcal C(y)], [-1:\mathcal C(x):-i:\mathcal C(z)])\\
 &&+g([1:\mathcal C(x):-i:\mathcal C(y)], [1:\mathcal C(x):-i:\mathcal C(z)])-g([1:-1:-i:\mathcal C(y)], [1:-1:-i:\mathcal C(z)])\\
 &&+g([1:-1:-i:\mathcal C(x)], [1:-1:-i:\mathcal C(z)])-g([1:-1:-i:\mathcal C(x)], [1:-1:-i:\mathcal C(y)])\\
 &=& g\left(1- \frac x y, 1- \frac x z\right)- g\left(1-\frac{x(y-1)}{y(x-1)},1-\frac{x(z-1)}{z(x-1)} \right) + g\left(1-\frac{y-1}{x-1},1-\frac{z-1}{x-1} \right)\\
 &&-g(y,z)+g(x,z) - g(x, y)\\
 &=& g\left(\frac{x}{z}, \frac{y}{z}\right) -g\left(\frac{x(z-1)}{z(x-1)}, \frac{y(z-1)}{z(y-1)}\right) + g\left(\frac{z-1}{x-1}, \frac{z-1}{y-1} \right)-g(y,z)+g(x,z) - g(x, y).
 \end{eqnarray*}
\end{proof}
Comparing these formulas to the formulas appearing in the introduction we deduce:
\begin{corollary}\label{CorDictionary}
The perturbed Spence-Abel equation \eqref{PerturbedSpenceAbel} and the $6$-term equation \eqref{6termlongIntro} can respectively be written as
\[\tau^3L = R\quad \text{and}\quad \tau^4 R = 0.\]
and are thus respectively equivalent to the cohomological equations
\[
\delta^3{\rm ext}_4(L) = {\rm ext}_5(R)\quad \text{and} \quad \delta^4{\rm ext}_5(R) = 0.
\]\qed
\end{corollary}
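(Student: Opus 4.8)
The plan is to prove the corollary in two steps. First I would identify the left-hand sides of the functional equations \eqref{PerturbedSpenceAbel} and \eqref{6termlongIntro} with the reduced differentials $\tau^3 L$ and $\tau^4 R$ furnished by Proposition \ref{6termexplicit}, thereby rewriting the equations as $\tau^3 L = R$ and $\tau^4 R = 0$. Then I would translate these into the stated cohomological equations by unwinding the definition of $\tau^k$ and exploiting that $\mathrm{ext}_k$ and $\mathrm{res}_k$ are mutually inverse bijections.

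For the first step I would simply compare formulas. Proposition \ref{6termexplicit}(i) gives
\[
\tau^3(L)(x,y) = L(x) - L(y) - L\left(\frac{x}{y}\right) - L\left(\frac{y-1}{x-1}\right) + L\left(\frac{x(y-1)}{y(x-1)}\right),
\]
which is verbatim the left-hand side of \eqref{PerturbedSpenceAbel}; hence \eqref{PerturbedSpenceAbel} reads $\tau^3 L = R$, provided $L \in \mathcal F_{\rm alt}(\mathcal P_1)$ (equivalently, $L$ satisfies the reflection symmetry \eqref{PerturbedSymmetry0} with $C=0$) so that $\tau^3 L$ is defined. Likewise, Proposition \ref{6termexplicit}(ii) identifies the left-hand side of the $6$-term equation \eqref{6termlongIntro} with $-\tau^4(R)(x,y,z)$; since the equation only asserts that this expression vanishes, the overall sign is immaterial and \eqref{6termlongIntro} is equivalent to $\tau^4 R = 0$, again once $R \in \mathcal F_{\rm alt}(\mathcal P_2)$. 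The only care required here is to track this sign and to keep straight which alternating space each of $L$ and $R$ inhabits.

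For the second step I would invoke the definition $\tau^k = \mathrm{res}_{k+2} \circ \delta^k \circ \mathrm{ext}_{k+1}$ together with the fact, established in Proposition \ref{Parameters} and Lemma \ref{LemmaRestriction} and encoded in the commuting diagram \eqref{DefTau}, that $\mathrm{ext}_k$ and $\mathrm{res}_k$ are mutually inverse bijections. Applying $\mathrm{ext}_5$ to $\tau^3 L = R$ and using $\mathrm{ext}_5 \circ \mathrm{res}_5 = \mathrm{id}$ yields
\[
\delta^3 \mathrm{ext}_4(L) = \mathrm{ext}_5(\tau^3 L) = \mathrm{ext}_5(R),
\]
while applying $\mathrm{res}_5$ recovers $\tau^3 L = R$; so the two equations are equivalent. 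In the same manner, since $\mathrm{ext}_6$ is injective, $\tau^4 R = 0$ holds if and only if $\mathrm{ext}_6(\tau^4 R) = \delta^4 \mathrm{ext}_5(R) = 0$.

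I do not anticipate any genuine obstacle: the statement amounts to matching the explicit formulas of Proposition \ref{6termexplicit} against the equations of the introduction, and then reading the diagram \eqref{DefTau} backwards through the isomorphisms $\mathrm{res}_k$. The only points that demand attention are the sign discrepancy in the $6$-term case and the implicit domain conditions $L \in \mathcal F_{\rm alt}(\mathcal P_1)$ and $R \in \mathcal F_{\rm alt}(\mathcal P_2)$ that make the reduced differentials well-defined; neither presents real difficulty, which is why the corollary can be recorded with a bare \textsc{qed}.
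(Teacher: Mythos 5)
Your proposal is correct and follows exactly the route the paper intends: the corollary is recorded with a bare \textsc{qed} precisely because it amounts to matching the formulas of Proposition \ref{6termexplicit} against \eqref{PerturbedSpenceAbel} and \eqref{6termlongIntro} and then transporting through the inverse bijections ${\rm ext}_k$ and ${\rm res}_k$ in the diagram \eqref{DefTau}. Your two explicit cautions --- the overall sign in the $6$-term case (harmless since the equation is homogeneous of the form ``$=0$'') and the implicit domain hypotheses $L\in\mathcal F_{\rm alt}(\mathcal P_1)$, $R\in\mathcal F_{\rm alt}(\mathcal P_2)$ --- are exactly the right points to flag and are consistent with how the paper later invokes the corollary.
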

As a first consequence we see that \eqref{6termlongIntro} is a necessary condition for the solvability of \eqref{PerturbedSpenceAbel}.

\section{Existence, uniqueness and boundedness}\label{SecExUn}

\subsection{Measurable group cohomology of ${\rm PU}(1,1)$, existence and uniqueness}

The following theorem appears explicitly first in \cite{Pieters}, where it is generalized to arbitrary real-hyperbolic groups. Note that the proof for ${\rm PU}(1,1)$ does not require the more sophisticated tools developed in \cite{Pieters} to deal with cohomology with coefficients in non-Fr\'{e}chet modules, but only the basic version of the Bloch spectral sequence \cite{Bloch}.
\begin{theorem}\label{Pieters} The cohomology of the complex $(L^0_{\rm alt}((S^1)^{\bullet+1})^G, \delta^\bullet)$ is isomorphic to the measurable (equivalently, continuous) group cohomology of $G$. In particular, it vanishes in degrees $\geq 3$.\qed
\end{theorem}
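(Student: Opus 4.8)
The plan is to identify the complex $(L^0_{\rm alt}((S^1)^{\bullet+1})^G,\delta^\bullet)$ with a standard resolution computing the measurable cohomology $H^*_m(G)$ of $G={\rm PU}(1,1)$, and then to invoke the classical computation of $H^*_m(G)$. The whole argument rests on the fact that $S^1$ is the Furstenberg boundary $G/P$ of $G$, so that the measurable $G$-modules $L^0((S^1)^n)$ are well suited for homological algebra.

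First I would drop the alternating condition and work with the full complex $(L^0((S^1)^{\bullet+1})^G,\delta^\bullet)$. The key claim is that the augmented complex
\[
0 \to \R \to L^0(S^1) \xrightarrow{\delta^0} L^0((S^1)^2) \xrightarrow{\delta^1} L^0((S^1)^3) \to \cdots
\]
is a resolution of the trivial module $\R$ by relatively injective measurable $G$-modules. Exactness follows from the contracting homotopy obtained by integrating one variable against a fixed probability measure on $S^1$; this homotopy is not $G$-equivariant, but that is irrelevant for acyclicity of the non-invariant complex. Relative injectivity of each $L^0((S^1)^n)$ follows from amenability of the $G$-action on $(S^1)^n$: after discarding the null set of degenerate tuples, $(S^1)^{(n)}$ is a finite union of orbits $G/H$ whose stabilizers $H$ (a minimal parabolic for $n=1$, a one-parameter subgroup for $n=2$, and the trivial group for $n\ge 3$, by sharp $3$-transitivity) are all amenable, and amenable actions give relatively injective $L^0$-modules.

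Granting this, the fundamental lemma of homological algebra — applied to the two resolutions $L^0(G^{\bullet+1})$ (the homogeneous bar resolution, relatively injective by construction and computing $H^*_m(G)$ by definition) and $L^0((S^1)^{\bullet+1})$ — yields a $G$-chain homotopy equivalence, unique up to homotopy, and hence a canonical isomorphism $H^*(L^0((S^1)^{\bullet+1})^G,\delta)\cong H^*_m(G)$. Concretely this is realised by Bloch's double complex $A^{p,q}=L^0((S^1)^{p+1}\times G^{q+1})^G$ with its two homogeneous differentials: taking $d_G$-cohomology first collapses each column to $L^0((S^1)^{p+1})^G$ by relative injectivity, while taking $d_S$-cohomology first collapses each row to $L^0(G^{q+1})^G$ by the contracting homotopy, and comparison of the two spectral sequences gives the isomorphism. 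Since the comparison maps can be chosen equivariantly for the coordinate-permutation actions of the symmetric groups, and since alternating cochains classically compute group cohomology, the same isomorphism survives restriction to the alternating subcomplexes, giving $H^*(L^0_{\rm alt}((S^1)^{\bullet+1})^G,\delta)\cong H^*_m(G)$.

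Finally I would compute the right-hand side. Measurable and continuous cohomology of $G$ agree for trivial $\R$-coefficients, and by the van Est isomorphism $H^*_c(G;\R)\cong H^*(\gfr,\kfr;\R)$ equals the cohomology of the compact dual of the symmetric space $\mathbb H^2$, namely $H^*(S^2;\R)$, which is $\R$ in degrees $0$ and $2$ and vanishes otherwise; in particular it vanishes in degrees $\ge 3$. The main obstacle is the homological algebra over $L^0$-coefficients, which are complete and metrizable but not locally convex, so the relative-injectivity formalism and the fundamental lemma require care. For the single group ${\rm PU}(1,1)$ this difficulty is mild — the explicit contracting homotopy and the concrete amenable orbit structure above make Bloch's original spectral-sequence argument run directly — which is exactly why one avoids here the heavier machinery that \cite{Pieters} develops for general real-hyperbolic groups.
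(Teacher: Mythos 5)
The paper does not actually prove this statement; it is quoted from \cite{Pieters} (with the remark that for ${\rm PU}(1,1)$ the basic Bloch spectral sequence suffices), so your double-complex strategy is indeed the intended route. However, as written your argument has a gap at its central step. You claim that taking $d_G$-cohomology first ``collapses each column to $L^0((S^1)^{p+1})^G$ by relative injectivity,'' deducing relative injectivity from amenability of the stabilizers. That implication is a theorem about \emph{bounded} cohomology and $L^\infty$-modules (Burger--Monod): amenable groups have vanishing bounded cohomology, so $L^\infty$ of an amenable $G$-space is relatively injective. It fails for measurable/continuous cohomology with $L^0$-coefficients. By Shapiro's lemma, $H^q_m(G;L^0(S^1))\cong H^q_c(P;\R)$ where $P$ is a minimal parabolic (the $ax+b$ group), and $H^1_c(P;\R)\cong\R\neq 0$; likewise $H^1_m(G;L^0((S^1)^{(2)}))\cong H^1_c(A;\R)\cong\R\neq 0$ for the one-parameter stabilizer $A$ of a pair. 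So the first spectral sequence of your double complex has a nonzero $q=1$ row in columns $p=0,1$, and the identification of the abutment with the cohomology of the $q=0$ row requires showing that the $d_1$-differential $H^1_c(P;\R)\to H^1_m(G;L^0((S^1)^{(2)}))$ is an isomorphism so that these terms cancel. That computation is the actual content of the Bloch/Pieters argument in low degrees, and it is missing from your proof.

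Two smaller points. First, your contracting homotopy ``integrate one variable against a fixed probability measure'' is not defined on $L^0$: a measurable function of several variables need not be integrable in any single variable, so the formula does not produce an element of $L^0((S^1)^n)$. Exactness of the rows must instead be obtained by the standard Fubini argument (for a cocycle $f$, evaluation at a generic point $x_0$ of the first variable yields a primitive almost everywhere). Second, $(S^1)^{(n)}$ is \emph{not} a finite union of $G$-orbits for $n\geq 4$ --- the quotient is the positive-dimensional parameter space $\mathcal P_{n-3}$ (times a finite set), as the paper's own cross-ratio coordinates show; what saves you there is that the action is essentially free with standard quotient, so $L^0((S^1)^{(n)})$ is coinduced from the trivial subgroup and genuinely acyclic for $n\geq 3$. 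Your concluding van Est computation of $H^*_c(G;\R)\cong H^*(S^2;\R)$ is correct.
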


\begin{corollary}[Existence and Uniqueness]\label{ExUn}\item
\begin{enumerate}
\item For every $R \in L^0(\mathcal P_2)$ and $C\in \R$ the system \eqref{PerturbedSpenceAbel} -- \eqref{PerturbedSymmetry} has at most one solution $L^{(R, C)} \in L^0((0,1))$.
\item Given $R \in L^0(\mathcal P_2)$ and $C\in \R$, the system \eqref{PerturbedSpenceAbel} -- \eqref{PerturbedSymmetry} has a solution in $L^0((0,1))$ if and only if $R$ satisfies \eqref{6termlongIntro} and \eqref{RSymmetry} for almost all $(x,y,z) \in \mathcal P_3$.
\item In this case, $L^{(R,C)} = L^{(R-C/2,0)}+C/2$.
\end{enumerate}
\end{corollary}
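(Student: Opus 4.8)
The plan is to run all three statements through the cohomological dictionary of Section~\ref{SecDictionary}. By Corollary~\ref{CorDictionary} the perturbed Spence-Abel equation \eqref{PerturbedSpenceAbel} reads $\tau^3 L = R$, and by \eqref{P1} together with \eqref{PerturbedSymmetry0} the reflection symmetry \eqref{PerturbedSymmetry} with $C = 0$ is exactly the condition $L \in L^0_{\rm alt}(\mathcal P_1)$. Thus, after first reducing to the case $C = 0$ (which is part (iii)), all assertions become statements about the reduced complex $(L^0_{\rm alt}(\mathcal P_\bullet), \tau^\bullet)$, which by the vertical isomorphisms in \eqref{DefTauLInftyDownstairs} is isomorphic to $(L^0_{\rm alt}((S^1)^{\bullet+1})^G, \delta^\bullet)$. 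The whole argument then rests on Theorem~\ref{Pieters}, which makes the cohomology of this complex vanish in degrees $3$ and $4$.

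For uniqueness (i) I would take two solutions $L_1, L_2 \in L^0((0,1))$ for the same data $(R, C)$; their difference $D = L_1 - L_2$ satisfies $\tau^3 D = 0$ and $D(1-x) = -D(x)$, so $D$ lies in the kernel of $\tau^3 \colon L^0_{\rm alt}(\mathcal P_1) \to L^0_{\rm alt}(\mathcal P_2)$. Transported through ${\rm ext}_4$, this kernel is the space of $\delta^3$-cocycles in degree $3$ of \eqref{DefTauLInftyDownstairs}. The point is that the differential \emph{entering} degree $3$ is the zero map, as recorded by the leftmost arrow of \eqref{DefTauLInftyDownstairs}: concretely $\delta^2$ vanishes on $L^0_{\rm alt}((S^1)^{(3)})^G \cong \R$, since every ordered sub-triple of a cyclically oriented quadruple is positively oriented and hence $\delta^2\beta = a - a + a - a = 0$ there. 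Therefore the cocycle space in degree $3$ coincides with $H^3$, which vanishes by Theorem~\ref{Pieters}, so $D = 0$.

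For existence (ii) I would argue both directions through the same isomorphism. Necessity is formal: if $\tau^3 L = R$ with $L \in L^0_{\rm alt}(\mathcal P_1)$, then $R = \tau^3 L$ lies in the image of $\tau^3$, hence in $L^0_{\rm alt}(\mathcal P_2)$, which is precisely \eqref{RSymmetry}, and $\tau^4 R = \tau^4 \tau^3 L = 0$ because $\tau^\bullet$ is a differential, which is the $6$-term equation \eqref{6termlongIntro}. Conversely, if $R$ satisfies \eqref{RSymmetry} and \eqref{6termlongIntro}, then by Proposition~\ref{Parameters} it defines an element of $L^0_{\rm alt}(\mathcal P_2)$ with $\tau^4 R = 0$, so ${\rm ext}_5(R)$ is a $\delta^4$-cocycle in degree $4$. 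By Theorem~\ref{Pieters} it is a coboundary, ${\rm ext}_5(R) = \delta^3 \gamma$ with $\gamma \in L^0_{\rm alt}((S^1)^{(4)})^G$; setting $L := {\rm res}_4(\gamma) \in L^0_{\rm alt}(\mathcal P_1)$ and using ${\rm ext}_4 {\rm res}_4 = {\rm id}$ and ${\rm res}_5 {\rm ext}_5 = {\rm id}$ yields $\tau^3 L = {\rm res}_5 \delta^3 \gamma = {\rm res}_5 {\rm ext}_5(R) = R$, so $L$ solves \eqref{PerturbedSpenceAbel}--\eqref{PerturbedSymmetry} with $C = 0$.

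Finally, part (iii), and with it the reduction to $C = 0$ used throughout, is a direct computation: since $\tau^3$ is linear and the explicit formula of Proposition~\ref{6termexplicit} evaluates it on constant functions, while adding a constant to $L$ shifts the reflection constant in a controlled way, subtracting a suitable multiple of $C$ from $L$ turns the data $(R, C)$ into data with $C = 0$, and comparing the two solutions gives the stated identity. I do not expect a genuine obstacle here. The only points demanding care are the bookkeeping of the alternating conditions \eqref{P1}--\eqref{P2} under the maps ${\rm ext}$ and ${\rm res}$, and the observation — essential for uniqueness — that $H^3 = 0$ by itself is not enough: one must also verify that the incoming differential into degree $3$ vanishes, so that the cocycle space there is genuinely $H^3$ rather than a proper quotient of it.
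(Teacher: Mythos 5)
Your proposal is correct and follows essentially the same route as the paper: reduce to $C=0$ by subtracting $C/2$, translate via the dictionary of Corollary~\ref{CorDictionary} into the complex $(L^0_{\rm alt}((S^1)^{\bullet+1})^G,\delta^\bullet)$, invoke Theorem~\ref{Pieters} for vanishing in degrees $3$ and $4$, and upgrade $H^3=0$ to the vanishing of the full $3$-cocycle space by noting that the incoming differential from the ($G$-transitive) triple level is zero. Your explicit remark that this last step is needed is precisely the paper's observation that every $3$-coboundary is trivial, so the two arguments coincide.
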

\begin{proof} Observe first that $L$ satisfies the system \eqref{PerturbedSpenceAbel}-\eqref{PerturbedSymmetry} with right-hand side $R$ and constant $C$ if and only if $\widetilde{L} := L-C/2$ satisfies the same system with right-hand side $\widetilde{R} := R-C/2$ and $C=0$. Indeed we have
\[
\widetilde{L}(1-x) = L(1-x)-C/2 = (C-L(x))-C/2 = C/2-L(x) = -\widetilde{L}(x)
\]
and
\[ \widetilde{L} (x) -\widetilde{L} (y) - \widetilde{L} \left(\frac{x}{y}\right) - \widetilde{L} \left(\frac{y-1}{x-1}\right) +  \widetilde{L} \left(\frac{x(y-1)}{y(x-1)}\right) = R(x,y)-C/2.\]
Also observe that \eqref{6termlongIntro} and \eqref{RSymmetry} hold for $\widetilde{R}$ iff they hold for $R$. Replacing $L$ and $R$ by $\widetilde{L}$ and $\widetilde{R}$ we may thus assume that
$C=0$ and that $L$ satisfies \eqref{PerturbedSymmetry0}.

By \eqref{P1}, the function $L$ satisfies \eqref{PerturbedSymmetry0} if and only if $L \in L^0_{\rm alt}(\mathcal P_1)$. Similarly, \eqref{RSymmetry} amounts to $R \in L^0_{\rm alt}(\mathcal P_2)$. Combining this observation with Corollary \ref{CorDictionary} we see that the corollary amounts to showing that every $4$-cocycle $c$ in the complex $(L^0_{\rm alt}((S^1)^{\bullet+1})^G, \delta^\bullet)$ has a unique primitive. Existence of a primitive is immediate from the vanishing of the $4$th cohomology of this complex, as implied by Theorem \ref{Pieters}. Assume now that $p_1, p_2$ are two primitives of $c$; then the difference $p_1-p_2$ is a $3$-cocycle, hence a $3$-coboundary by applying Theorem \ref{Pieters} again. However, since $G$ acts transitively on $(S^1)^{(3,+)}$, every $2$-cochain and consequently every $3$-coboundary, is trivial.
\end{proof}

\subsection{Continuous bounded cohomology of ${\rm PU}(1,1)$ and boundedness}

We have the following counterparts to Theorem \ref{Pieters} in bounded cohomology.
\begin{theorem}[Burger--Monod, \cite{Burger/On-and-around-the-bounded-cohomology-of-SL2}] The cohomology of the complex $(L^\infty_{\rm alt}((S^1)^{\bullet+1})^G, \delta^\bullet)$ vanishes in degree $3$.\qed
\end{theorem}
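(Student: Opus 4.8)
The plan is to deduce the bounded statement from the measurable vanishing already recorded in Theorem~\ref{Pieters}, exploiting the fact that the incoming differential in degree $3$ is trivial. Concretely, the arrow entering $L^\infty_{\rm alt}((S^1)^{(4)})^G$ in the complex \eqref{DefTauLInftyDownstairs} is the zero map: since $G$ acts transitively on $(S^1)^{(3,+)}$, every $G$-invariant alternating function on $(S^1)^{(3)}$ is a constant multiple of the orientation cocycle $b$, so $L^\infty_{\rm alt}((S^1)^{(3)})^G$ is one-dimensional; and evaluating $\delta^2 b$ on a cyclically oriented $4$-tuple gives $1-1+1-1=0$, because each of its $3$-subtuples is positively oriented. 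As $\delta^2 b$ is $G$-invariant and alternating, it vanishes identically, whence $\im\delta^2=0$. Consequently the third cohomology of the bounded complex equals the kernel $\ker\bigl(\delta^3\colon L^\infty_{\rm alt}((S^1)^{(4)})^G\to L^\infty_{\rm alt}((S^1)^{(5)})^G\bigr)$.

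I would then compare this kernel with its measurable counterpart. The identical computation shows that the incoming differential also vanishes in the measurable complex $(L^0_{\rm alt}((S^1)^{\bullet+1})^G,\delta^\bullet)$, so that its third cohomology is again a kernel; by Theorem~\ref{Pieters} this cohomology vanishes, giving
\[
\ker\bigl(\delta^3\colon L^0_{\rm alt}((S^1)^{(4)})^G\to L^0_{\rm alt}((S^1)^{(5)})^G\bigr)=0.
\]
Since $L^\infty_{\rm alt}((S^1)^{(4)})^G\subset L^0_{\rm alt}((S^1)^{(4)})^G$ and $\delta^3$ is given by the same alternating-sum formula on both spaces, the bounded kernel is contained in the measurable one and hence is also trivial. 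Together with the first paragraph this yields $H^3=0$ for the bounded complex, as claimed.

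It is worth recording what this kernel is intrinsically, and how one might argue without borrowing Theorem~\ref{Pieters}. By the dictionary of Corollary~\ref{CorDictionary} and Proposition~\ref{6termexplicit}, the class to be eliminated is $\ker\tau^3\subset L^\infty_{\rm alt}(\mathcal P_1)$, that is, the space of essentially bounded $f\colon(0,1)\to\R$ with $f(x)=-f(1-x)$ solving the \emph{homogeneous} Spence-Abel equation $\tau^3 f=0$; the reduction above shows this space is trivial. A self-contained proof in the spirit of Burger and Monod would instead use that $S^1$ is an amenable $G$-boundary, so that the bounded complex computes $H^\bullet_{cb}(G)$, and then establish $H^3_{cb}({\rm PSL}_2(\R))=0$ directly, e.g.\ through the comparison with continuous cohomology (where $H^3_c({\rm PSL}_2(\R))\cong H^3(S^2)=0$) together with the double ergodicity of the $G$-action on $S^1\times S^1$. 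The main obstacle in such a direct attack is precisely to control $\ker\tau^3$ without a uniqueness theorem for the functional equation; the reduction above sidesteps this entirely, at the cost of invoking the logically independent and a priori stronger measurable result.
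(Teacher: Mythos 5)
Your proof is correct. Note first that the paper gives no proof of this statement at all --- it is imported from Burger--Monod and stated without argument --- so the relevant comparison is with the paper's overall logical structure, and there your observation is a good one: within this paper the bounded degree-$3$ vanishing is a formal consequence of Theorem~\ref{Pieters}, which is stated earlier and proved independently of it. Both halves of your reduction check out. The space $L^\infty_{\rm alt}((S^1)^{(3)})^G$ (and likewise its $L^0$ analogue, by ergodicity of the transitive action on each orientation component) is spanned by the orientation cocycle, whose coboundary vanishes by the $1-1+1-1$ computation on cyclically oriented quadruples and hence vanishes identically by alternation and invariance; so in both complexes $H^3=\ker\delta^3$, the measurable kernel is trivial by Theorem~\ref{Pieters}, and the bounded kernel embeds in it because the inclusion $L^\infty\subset L^0$ is a chain map. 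This is essentially the shape of Burger and Monod's original argument as well (they establish the boundary realization of measurable cohomology for ${\rm PSL}_2(\R)$ by hand and combine it with $H^3_c({\rm PSL}_2(\R))=0$), and it is the same reasoning the paper uses implicitly in the uniqueness part of Corollary~\ref{ExUn}, whose phrase ``every $2$-cochain and consequently every $3$-coboundary is trivial'' should be read exactly as your more careful statement that the $2$-cochains form a line annihilated by $\delta^2$. Your closing paragraph, identifying $\ker\tau^3$ with the homogeneous bounded Spence--Abel problem and sketching a self-contained route, is accurate commentary but not needed for the proof.
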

\begin{theorem}[\cite{HO1}]\label{Input1} The cohomology of the complex $(L^\infty_{\rm alt}((S^1)^{\bullet+1})^G, \delta^\bullet)$ vanishes in degree $4$.\qed
\end{theorem}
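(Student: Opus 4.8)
The plan is to unwind the cohomological statement into a concrete filling problem via the dictionary of Section \ref{SecDictionary}, to produce a bounded primitive by an averaging (cone) integral, and to repair its failure of $G$-invariance using the degree-$3$ vanishing theorem of Burger--Monod. Concretely, using Proposition \ref{Parameters}, Corollary \ref{CorDictionary} and the diagram \eqref{DefTauLInftyDownstairs}, the assertion $H^4=0$ for $(L^\infty_{\rm alt}((S^1)^{\bullet+1})^G,\delta^\bullet)$ is equivalent to the following: every bounded alternating $G$-invariant cocycle $c\in L^\infty_{\rm alt}((S^1)^{(5)})^G$ with $\delta^4 c=0$ admits a bounded alternating $G$-invariant cochain $b\in L^\infty_{\rm alt}((S^1)^{(4)})^G$ with $\delta^3 b=c$; dually, for every bounded $R\in L^\infty_{\rm alt}(\mathcal P_2)$ solving the $6$-term equation \eqref{6termlongIntro} one must exhibit a bounded $L\in L^\infty_{\rm alt}(\mathcal P_1)$ with $\tau^3 L=R$. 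Note that the measurable vanishing theorem (Theorem \ref{Pieters}) already supplies an \emph{unbounded} $G$-invariant primitive, so the entire difficulty is to upgrade boundedness and invariance simultaneously.

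For a fixed $p\in S^1$ the cone operator $(h_p c)(z_0,z_1,z_2,z_3)=c(p,z_0,z_1,z_2,z_3)$ satisfies the homotopy identity $\delta^3 h_p+h_p\delta^4=\mathrm{id}$ on the full (non-invariant) complex of $4$-cochains. Integrating $p$ over $S^1$ against the normalised Lebesgue measure $d\theta/2\pi$ produces a bounded operator $\mathcal K$ with $\|\mathcal K c\|_\infty\le\|c\|_\infty$ and, since $\delta^4 c=0$, with $\delta^3(\mathcal K c)=c$. As $c$ is alternating in all five slots, $b_1:=\mathcal K c$ is alternating in its four arguments, and as Lebesgue measure is invariant under the rotation subgroup $K=\mathrm{SO}(2)<G$ (the stabiliser of the origin of $\mathbb D$), the cochain $b_1$ is $K$-invariant. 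This $b_1$ is precisely the first term of the explicit solution formula, namely the average $\tfrac{1}{2\pi}\int_0^{2\pi}c(e^{i\psi},\,\cdot\,)\,d\psi$.

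The main obstacle is that $b_1$ is only $K$-invariant, not $G$-invariant. One measures this failure by the assignment $g\mapsto u_g:=g\cdot b_1-b_1$, which takes values in bounded $3$-cocycles (indeed $\delta^3 u_g=g\cdot c-c=0$) and satisfies the cocycle relation $u_{gh}=g\cdot u_h+u_g$. The degree-$3$ vanishing theorem of Burger--Monod shows that each $u_g$ is a bounded coboundary, and, since $G$ acts transitively on $(S^1)^{(3,+)}$, every $G$-invariant alternating $2$-cochain is a constant multiple of the orientation cocycle and hence $\delta^2$-closed (as in the proof of Corollary \ref{ExUn}). The plan is to combine these facts to trivialise the cocycle $g\mapsto u_g$ by a fixed bounded $3$-cocycle $q$, so that $b:=b_1-q$ becomes $G$-invariant and bounded with $\delta^3 b=c$.

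The hard part will be to make this correction not merely qualitative but explicit and, above all, uniformly bounded: the abstract degree-$3$ vanishing produces the correction only cochain-by-cochain and carries no a priori norm control. Here I would exploit the Iwasawa decomposition $G=KAN$ together with the disc geometry: since $b_1$ is already $K$-invariant, the correction only has to compensate for motion along the non-compact directions, which I would realise by integrating a secondary density, obtained from $c$ (equivalently $R$) by a further averaging, along the geodesics of $\mathbb D$ cut out by the cross-ratio coordinates. These are exactly the four geodesic integrals of the function $F^\flat_{(R,C)}$ appearing in the explicit solution formula. Establishing a uniform bound on this secondary integral, independent of the configuration, is the crux of the argument and the step where the constructive nature of the proof is indispensable, since it is what ultimately yields both the boundedness of $L^{(R,C)}$ and the regularity and continuity statements of Theorem \ref{IntroSummary}.
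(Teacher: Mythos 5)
First, note that the paper does not actually prove Theorem \ref{Input1} here: it is quoted from \cite{HO1}, and only its constructive content is recapped in Section \ref{SectionExplicit}. Measured against that construction, your skeleton is right in two respects. The averaged cone operator $b_1=\fint c(e^{i\psi},\,\cdot\,)\,d\psi$ is indeed a norm-non-increasing, alternating, $K$-invariant primitive of $c$ in the non-invariant complex, and it is exactly the first term of \eqref{Defpc}; moreover, the remaining four terms of \eqref{Defpc} are precisely $\delta^2\tilde f$ for the $K$-invariant $2$-cochain $\tilde f(e^{i\theta_0},e^{i\theta_1},e^{i\theta_2})=f_0^{(c)}(\theta_1-\theta_0,\theta_2-\theta_0)$, i.e.\ a correction of the type you postulate, obtained by integrating a density built from $c$ along orbits of a one-parameter subgroup (the unipotent group $N$ acting on pairs of boundary points, not geodesics).

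There are, however, two genuine gaps. First, the Burger--Monod degree-$3$ vanishing cannot play the role you assign to it: it concerns the $G$-invariant subcomplex, whereas $u_g=g\cdot b_1-b_1$ is a non-invariant $3$-cocycle, and each $u_g$ is a coboundary already by the cone homotopy, not by Burger--Monod. What you actually need is a single bounded $q$ with $g\cdot q-q=u_g$ for all $g$ simultaneously, with uniform norm control --- the vanishing of a degree-one bounded class with coefficients in a module of cochains. Chasing this through the double complex of the modules $L^\infty((S^1)^{m})$ only reproves that the invariant complex computes $H^\bullet_{cb}(G)$ and returns you to the statement $H^4_{cb}(G)=0$, which is the theorem itself; no soft argument of this kind can succeed, since the analogous claim in degree $2$ is false ($H^2_{cb}(G)\neq 0$). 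Second, the entire mathematical content of the theorem is the construction of the integrand $F^\flat_c$ --- in particular the choice of initial data on the anti-diagonal transversal encoded by $r_c$ and $v_c^\flat$ --- together with the proof that the resulting integral $f_0^{(c)}$ along the non-compact $N$-orbits is \emph{uniformly} bounded. Your proposal explicitly defers exactly this step (``the crux''), so what remains is a correct guess at the shape of the answer rather than a proof.
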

As in the unbounded case, the cohomology of the complex $(L^\infty_{\rm alt}((S^1)^{\bullet+1})^G, \delta^\bullet)$ coincides with the continuous bounded cohomology of $G$ (see \cite{MonodBook}). However, this cohomology is not known beyond degree $4$. Arguing as in the proof of Corollary \ref{ExUn} we obtain:
\begin{corollary}[Boundedness] \label{Bound} Assume that $R \in L^0(\mathcal P_2)$ satisfies \eqref{6termlongIntro} and \eqref{RSymmetry} for almost all $(x,y,z) \in \mathcal P_3$ and let $C \in \R$. Then
\[
L^{(R,C)} \in L^\infty((0,1)) \quad\Leftrightarrow\quad R  \in L^\infty(\mathcal P_2).
\]
\end{corollary}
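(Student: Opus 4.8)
The plan is to follow the strategy of Corollary~\ref{ExUn} and translate everything into the language of the two complexes appearing in \eqref{DefTau} and \eqref{DefTauLInftyDownstairs}. First I would normalize away the constant: by part~(iii) of Corollary~\ref{ExUn} we have $L^{(R,C)} = L^{(R-C/2,\,0)} + C/2$, and $R-C/2$ is essentially bounded if and only if $R$ is, so it suffices to treat the case $C=0$, where $L \in L^0_{\rm alt}(\mathcal P_1)$ and $R \in L^0_{\rm alt}(\mathcal P_2)$ by \eqref{P1}--\eqref{P2}. Via the dictionary of Corollary~\ref{CorDictionary}, the perturbed Spence-Abel equation becomes $\delta^3 {\rm ext}_4(L) = {\rm ext}_5(R)$, so that ${\rm ext}_4(L) \in L^0_{\rm alt}((S^1)^{(4)})^G$ is a primitive of the cocycle ${\rm ext}_5(R)$ in the unbounded complex.

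The forward implication is the routine one. If $L \in L^\infty((0,1))$, then ${\rm ext}_4(L)$ is essentially bounded, and since $\delta^3$ is a signed sum of five terms, each an evaluation of its argument, it preserves essential boundedness; hence ${\rm ext}_5(R) = \delta^3{\rm ext}_4(L)$ and therefore $R = {\rm res}_5({\rm ext}_5(R))$ are essentially bounded. Concretely this is just the observation that the explicit formula for $\tau^3$ in Proposition~\ref{6termexplicit}(i) expresses $R$ as a sum of five evaluations of $L$, so that in fact $\|R\|_\infty \le 5\,\|L\|_\infty$.

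For the converse -- the substantive direction -- I would argue exactly as in Corollary~\ref{ExUn}. Assume $R \in L^\infty(\mathcal P_2)$. Then ${\rm ext}_5(R)$ is a \emph{bounded} $4$-cocycle in $(L^\infty_{\rm alt}((S^1)^{\bullet+1})^G, \delta^\bullet)$, and the degree-$4$ vanishing theorem (Theorem~\ref{Input1}) furnishes a bounded primitive $p \in L^\infty_{\rm alt}((S^1)^{(4)})^G$ with $\delta^3 p = {\rm ext}_5(R)$. Setting $L' := {\rm res}_4(p)$ and using that ${\rm ext}_4 \circ {\rm res}_4 = {\rm id}$ on $G$-invariant alternating functions (Proposition~\ref{Parameters}), one checks that $\tau^3 L' = {\rm res}_5 \, \delta^3 p = {\rm res}_5 \, {\rm ext}_5(R) = R$, so that $L' \in L^\infty_{\rm alt}(\mathcal P_1)$ is again a solution of the normalized system. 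By the uniqueness part of Corollary~\ref{ExUn} we conclude $L = L' = {\rm res}_4(p)$, which is essentially bounded because ${\rm res}_4$ manifestly preserves $L^\infty$.

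The main obstacle is packaged entirely into Theorem~\ref{Input1}: the only thing that upgrades the given, a priori merely measurable, primitive ${\rm ext}_4(L)$ to a bounded function is the existence of \emph{some} bounded primitive $p$ of the bounded $4$-cocycle ${\rm ext}_5(R)$, after which the uniqueness of primitives in the unbounded complex -- itself the combination of the degree-$3$ vanishing in Theorem~\ref{Pieters} with the transitivity of $G$ on $(S^1)^{(3,+)}$, as used in Corollary~\ref{ExUn} -- forces ${\rm ext}_4(L)$ to coincide with $p$. Thus the entire content of the boundedness statement reduces to the vanishing of the fourth continuous bounded cohomology of $G$.
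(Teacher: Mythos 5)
Your argument is correct and is essentially identical to the paper's: the paper likewise reduces to $C=0$, notes that boundedness of $L^{(R,C)}$ forces boundedness of $R$ directly from \eqref{PerturbedSpenceAbel}, and for the converse invokes Theorem~\ref{Input1} to produce a bounded primitive of ${\rm ext}(R)$, which must coincide with ${\rm ext}_4(L^{(R,0)})$ by the uniqueness established in Corollary~\ref{ExUn}. The only difference is that you spell out the restriction/extension bookkeeping that the paper compresses into ``arguing as in the proof of Corollary~\ref{ExUn}.''
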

\begin{proof} Boundedness of $L^{(R,C)}$ implies boundedness of $R$ by \eqref{PerturbedSpenceAbel}. Conversely, if $R \in L^\infty_{\rm alt}(\mathcal P_2)$, then ${\rm ext}(R) \in L^\infty_{\rm alt}((S^1)^5)^G$ has a bounded primitive by Theorem \ref{Input1}.
\end{proof}

\subsection{On measurable lifting and pointwise identities} 
Both the existence and the uniqueness statement in Corollary \ref{ExUn} concern function classes in $L^0$ rather than actual functions. In the bounded case, the existence statement can actually be refined from an almost everywhere identity to a strict pointwise identity. More precisely, let us denote by $\mathcal L^\infty$ the space of bounded measurable functions on a given space. Then the complex $(L^\infty_{\rm alt}((S^1)^{\bullet+1})^G, \delta^\bullet)$ is a quotient of the complex $(\mathcal L^\infty_{\rm alt}((S^1)^{\bullet+1})^G, \delta^\bullet)$, and by \cite{MonodLifting} every cocycle in the latter complex can be lifted to a cocycle in the former complex. In \cite{HO1} we actually established the following refinement of Theorem~\ref{Input1}.
\begin{theorem}[\cite{HO1}]\label{Input2} The cohomology of the complex $(\mathcal L^\infty_{\rm alt}((S^1)^{\bullet+1})^G, \delta^\bullet)$ vanishes in degree $4$.\qed
\end{theorem}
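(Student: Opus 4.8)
The plan is to derive the strict, everywhere-valid statement from its almost-everywhere counterpart, Theorem~\ref{Input1}, by exploiting the fact that the primitive produced in \cite{HO1} comes from an \emph{explicit integral homotopy operator} and not from an abstract vanishing argument. Let me first observe why a purely homological reduction is unattractive. The strict complex $(\mathcal L^\infty_{\rm alt}((S^1)^{\bullet+1})^G,\delta^\bullet)$ surjects onto its $L^\infty$ quotient with kernel the subcomplex of almost-everywhere-vanishing functions, and the associated long exact sequence reduces the claim (given Theorem~\ref{Input1}) to the exactness of that null subcomplex in degree $4$. Transported to cross ratio coordinates via Proposition~\ref{Parameters}, this is a question about functions supported on Lebesgue-null sets on which the differential $\tau$ acts by the pointwise substitutions of Proposition~\ref{6termexplicit}; this seems no easier than the original problem. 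I would therefore bypass it entirely and argue constructively.

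Let $c \in \mathcal L^\infty_{\rm alt}((S^1)^{(5)})^G$ be an honest bounded measurable cocycle, so that $\delta^4 c = 0$ holds \emph{everywhere} on $(S^1)^{(6)}$; the goal is an honest $b \in \mathcal L^\infty_{\rm alt}((S^1)^{(4)})^G$ with $\delta^3 b = c$ everywhere. First I would recall from the proof of Theorem~\ref{Input1} the explicit operator $h$ assigning to a $4$-cocycle a primitive by integration; transported through the maps $\mathrm{res}$ and $\mathrm{ext}$, it is realized by the iterated integrals over $d\psi$ and $dt$ appearing in the introduction. The key observation is that this operator only integrates a bounded measurable function against fixed finite measures and pulls back along the coordinate changes built from the Cayley transform, the cross ratio coordinates $\Lambda$, and coordinate permutations, all of which are genuine diffeomorphisms onto open sets. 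Applied to the honest function $c$ it therefore returns an honest bounded measurable function $b := h(c)$, defined for \emph{every} argument as an absolutely convergent integral, and since the whole construction stays inside the $G$-invariant alternating complex, $b$ is automatically strictly $G$-invariant and alternating. The measurable lifting theorem \cite{MonodLifting} enters only to guarantee that all intermediate cochains can be taken to be honest representatives, so that no almost-everywhere ambiguity is introduced along the way.

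The heart of the argument is to verify that the homotopy relation $\delta^3 h(c) = c$, which in the setting of Theorem~\ref{Input1} holds only almost everywhere, in fact holds pointwise on all of $(S^1)^{(5)}$. I would do this by re-examining the Fubini-type computation establishing $\delta h + h\delta = \mathrm{id}$: each step rewrites an integrand using the cross ratio cocycle identity \eqref{cocycle} together with a change of variables, and both ingredients are valid for \emph{every} configuration of pairwise distinct points, not merely almost every one (the identity \eqref{cocycle} is stated for all admissible $z$, and the substitutions are pullbacks along the diffeomorphisms above). Feeding in the hypothesis $\delta^4 c = 0$, now an everywhere identity, collapses the telescoping sum to $c$ at every point, eliminating the exceptional null set that is unavoidable in the $L^\infty$ formulation.

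The main obstacle is precisely this promotion of an almost-everywhere identity to an everywhere identity. The delicate point is that the operations in the $L^\infty$ proof---restriction to a slice, pullback along a rational coordinate map, and substitution of the fixed base points $(-i,1,-1)$---are a priori defined only up to null sets, and one must check that along the specific diffeomorphisms supplied by $\mathcal C$ and $\Lambda$ they acquire an honest pointwise meaning and commute with $\delta$ on the nose, so that integrating an everywhere-valid integrand against a fixed measure yields an everywhere-valid identity. Once this bookkeeping of null sets is controlled, the constructed $b$ is an honest strict primitive of the arbitrary strict cocycle $c$, and the strict vanishing of the fourth cohomology follows.
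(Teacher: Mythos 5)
This theorem is imported verbatim from \cite{HO1}: the paper offers no proof beyond the citation and merely recalls the relevant construction in Section~\ref{SectionExplicit}, namely the explicit integral operator $c \mapsto p_c$ built from $r_c$, $v_c^\flat$, $F_c^\flat$ and $f_0^{(c)}$, which applied to an honest bounded cocycle returns an honest bounded primitive. Your proposal is essentially that same constructive strategy, so it matches the approach of the paper and of \cite{HO1}; the only caveat is one of narrative direction --- in \cite{HO1} the strict statement is what the integral construction proves directly (with Theorem~\ref{Input1} as its corollary), rather than being obtained by ``promoting'' an almost-everywhere homotopy identity, and the actual verification that $\delta^3 p_c = c$ proceeds via the PDE methods of \cite{HO1} rather than the Fubini-plus-cross-ratio computation you sketch.
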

We thus get the following refinement of Corollary \ref{ExUn}.
\begin{corollary} Assume that $R \in \mathcal L^\infty(\mathcal P_2)$ satisfies \eqref{6termlongIntro} and \eqref{RSymmetry} for all (rather than almost all) $(x,y,z) \in \mathcal P_3$ and let $C\in \R$. Then there exists $L \in \mathcal L^\infty_{\rm alt}((0,1))$ which satisfies 
the system \eqref{PerturbedSpenceAbel}-\eqref{PerturbedSymmetry} pointwise (rather than almost everywhere).\qed
\end{corollary}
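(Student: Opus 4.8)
The plan is to run the cohomological dictionary of Section \ref{SecDictionary} exactly as in the proof of Corollary \ref{ExUn}, but to replace the $L^0$-vanishing of Theorem \ref{Pieters} by the genuine-function vanishing of Theorem \ref{Input2}, so that every identity it produces is a pointwise-everywhere identity rather than an almost-everywhere one. As a first step I would reduce to $C=0$ precisely as in Corollary \ref{ExUn}: the substitution $\widetilde L := L-C/2$, $\widetilde R := R-C/2$ preserves membership in $\mathcal L^\infty$, turns \eqref{PerturbedSymmetry} into \eqref{PerturbedSymmetry0}, and leaves both \eqref{6termlongIntro} and \eqref{RSymmetry} unchanged (the signed coefficients of the six, resp. two, terms sum to zero). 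By \eqref{P2}, the equation \eqref{RSymmetry} now says precisely that $\widetilde R \in \mathcal L^\infty_{\mathrm{alt}}(\mathcal P_2)$, so it suffices to produce a genuine function $L \in \mathcal L^\infty_{\mathrm{alt}}(\mathcal P_1)$ with $\tau^3 L = \widetilde R$ everywhere on $\mathcal P_2$.

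Next I would build $L$ as a primitive. The map $\mathrm{ext}_5$ sends $\widetilde R$ to a genuine bounded $G$-invariant alternating function $\mathrm{ext}_5(\widetilde R)=\widetilde R\circ\Lambda$, which is measurable since $\Lambda$ is a submersion (preimages of null sets are null), and by Corollary \ref{CorDictionary} the six-term equation \eqref{6termlongIntro} is equivalent to $\delta^4\mathrm{ext}_5(\widetilde R)=0$; hence $\mathrm{ext}_5(\widetilde R)$ is a genuine $4$-cocycle. Theorem \ref{Input2} furnishes a genuine primitive $c\in\mathcal L^\infty_{\mathrm{alt}}((S^1)^{(4)})^G$ with $\delta^3 c=\mathrm{ext}_5(\widetilde R)$ holding at \emph{every} point of $(S^1)^{(5)}$. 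I then set $L:=\mathrm{res}_4(c)$. Because $\mathrm{ext}_4$ and $\mathrm{res}_4$ are mutually inverse bijections whose composition is the pointwise identity on $G$-invariant functions — for a cyclically oriented tuple the point $(-i,1,-1,\mathcal C(\Lambda(z_1,\dots,z_4)))$ lies in the same $G$-orbit as $(z_1,\dots,z_4)$ by sharp $3$-transitivity and Lemma \ref{LemmaRestriction} — we have $\mathrm{ext}_4(L)=c$, and therefore
\[
\tau^3 L = \mathrm{res}_5(\delta^3\mathrm{ext}_4(L)) = \mathrm{res}_5(\delta^3 c) = \mathrm{res}_5(\mathrm{ext}_5(\widetilde R)) = \widetilde R
\]
pointwise on $\mathcal P_2$. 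By Proposition \ref{6termexplicit}(i) this is exactly \eqref{PerturbedSpenceAbel} with right-hand side $\widetilde R$ holding everywhere, while $L\in\mathcal F_{\mathrm{alt}}(\mathcal P_1)$ is \eqref{PerturbedSymmetry0} holding everywhere. Translating back, $L+C/2$ solves the full system \eqref{PerturbedSpenceAbel}--\eqref{PerturbedSymmetry} pointwise. Note that only existence is claimed, so the coboundary-triviality argument used for uniqueness in Corollary \ref{ExUn} is not needed.

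The one genuinely new point — and the step I expect to be the main obstacle — is to verify that $L=\mathrm{res}_4(c)$ is a bona fide bounded measurable function, i.e. that $\mathrm{res}_4$ maps $\mathcal L^\infty_{\mathrm{alt}}((S^1)^{(4)})^G$ into $\mathcal L^\infty_{\mathrm{alt}}(\mathcal P_1)$ (the measurable incarnation of the diagram \eqref{DefTau} asserted in the Remark following it). Boundedness is immediate, since $\mathrm{res}_4$ merely evaluates $c$. Measurability is delicate, because $\mathrm{res}_4$ restricts $c$ to the measure-zero slice $\{(-i,1,-1,\mathcal C(\lambda)) : \lambda\in(0,1)\}$, and the precomposition of a merely Lebesgue-measurable function with a continuous map need not be measurable. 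I would resolve this using $G$-invariance together with Fubini's theorem: the map
\[
\Phi : G\times(0,1) \to (S^1)^{(4,+)}, \qquad \Phi(g,\lambda) = g\cdot(-i,1,-1,\mathcal C(\lambda)),
\]
is a diffeomorphism (the $G$-action on $(S^1)^{(3,+)}$ is sharply transitive and the fourth coordinate is the cross-ratio parameter, by Lemma \ref{LemmaRestriction}), so $c\circ\Phi$ is Lebesgue measurable on $G\times(0,1)$; by $G$-invariance $c(\Phi(g,\lambda))=L(\lambda)$ is independent of $g$, whence Fubini's theorem forces the slice $\lambda\mapsto L(\lambda)$ to be measurable. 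This completes the construction.
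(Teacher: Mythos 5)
Your argument is correct and is essentially the paper's own (the paper treats this corollary as immediate from Theorem \ref{Input2} via the dictionary, exactly as you do: reduce to $C=0$, extend $\widetilde R$ to a genuine $4$-cocycle, take the genuine primitive supplied by Theorem \ref{Input2}, and restrict back). Your Fubini/$G$-invariance argument for measurability of ${\rm res}_4$ of a genuine $\mathcal L^\infty$ function is a point the paper leaves implicit, and it is a correct way to justify that step.
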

We do not know whether the uniqueness part of Corollary \ref{ExUn} remains true in the pointwise sense, even for bounded right-hand side.

\section{Explicit formulas, regularity and continuity} \label{SectionExplicit}

\subsection{The function $F^\flat_c$}\label{SecIntegrand} The proof of Theorem \ref{Input2} is constructive, i.e., there is an explicit integral formula for the essentially unique primitive $p \in \mathcal L_{\rm alt}^\infty((S^1)^4)$ of a given cocycle $c \in \mathcal L^\infty_{\rm alt}((S^1)^5)^G$. We now recall this construction in order to establish regularity and continuity of solutions. 

The first step is to construct, out of a given cocycle $c$, a function $F^\flat_c: \Omega \to \R$, where $\Omega$ is the complement of the diagonal in $(0, 2\pi)^2$. This construction proceeds in several steps. We fix a cocycle $c \in \mathcal L_{\rm alt}^\infty((S^1)^5)^G$ and denote by $\Omega$ and $\widehat{\Omega}$ the complements of the respective diagonals in $(0, 2\pi)^2$ and $[0, 2\pi)^2$.
\begin{enumerate}
\item Define $r_c:(0, 2\pi) \to \R$ by
\[
r_c(\varphi) := -\frac 1 2(1-e^{i\varphi}) \cdot \int_{\pi}^\varphi \frac1{1-\cos(\zeta)}  \fint \fint \fint {\sin(\eta-\phi)} c(e^{i\eta}, e^{i\phi}, e^{i\psi}, 1, e^{i\zeta})d\eta d\phi d\psi d\zeta\\
\]
\item Given $\theta_1, \theta_2 \in [0, 2\pi)$ we denote by $\theta_1\ominus \theta_2$ the unique representative of $\theta_1-\theta_2$ modulo $2\pi$ in the interval $[0, 2\pi)$. This defines a measurable function on $[0, 2\pi)^2$, which is real-analytic on $\widehat{\Omega}$.
\item We define $v^\flat_c: \widehat{\Omega} \to \R$ by
\[
v_c^\flat(\theta_1, \theta_2) := {\rm Im}\left(e^{i\theta_1} r_c(\theta_2\ominus\theta_1)\right).
\]
\item Finally we define $F_c^\flat: \Omega \to \R$ by
\[
F_c^\flat(\varphi_1, \varphi_2) :=  \fint \fint \sin(\phi)c(e^{i\eta}, e^{i\phi}, 1, e^{i\varphi_1}, e^{i\varphi_2})d\eta\, d\phi + v_c^\flat(\varphi_1, \varphi_2) - v_c^\flat(0, \varphi_2) + v_c^\flat(0, \varphi_1).
\]
\end{enumerate}
\begin{lemma}\label{Fcflat} 
The above construction has the following properties:
\begin{enumerate}
\item If the cocycle $c \in \mathcal L^\infty_{\rm alt}((S^1)^5)^G$ is of class $C^k$ for some $k \in \mathbb N \cup \{0, \infty, \omega\}$, then so is $F_c^\flat$.
\item If $c_1, c_2 \in \mathcal L^\infty_{\rm alt}((S^1)^5)^G$ are cocycles, then
\[
\|F_{c_1}^\flat-F_{c_2}^\flat\|_{\infty} \leq 4\cdot \|c_1-c_2\|_\infty.
\]
\end{enumerate}
\end{lemma}
\begin{proof} (i) We first show that if $c$ is of class $C^k$, then so is $r_c$. If $k = \omega$ then after exchange of integration and summation this is an immediate consequence of the fundamental theorem of calculus. Thus assume $k \neq \omega$ and observe that both the integrand of $r$ and the bounds of the integral are $C^k$-functions on $(0, 2\pi)$. In particular, the integrand is bounded on every compact interval $[a,b] \subset (0, 2\pi)$. It then follows from the Leibniz integral rule, that $r$ is of class $C^k$ in the interior of each such interval $[a,b]$, hence is of class $C^k$ at every point of $(0, 2\pi)$. Since $\ominus: \widehat \Omega \to \Omega$ is real-analytic, we deduce that also $v^\flat$ is of class $C^k$, and consequently $F_c^\flat$ is of class $C^k$.

(ii) If we denote by $I_{c_j}$ the integrand of $r_{c_j}$, i.e.,
\[
I_{c_j}(\zeta) :=  \frac1{1-\cos(\zeta)}  \fint \fint \fint {\sin(\eta-\phi)} c_j(e^{i\eta}, e^{i\phi}, e^{i\psi}, 1, e^{i\zeta})d\eta d\phi d\psi,
\]
 then
\[
|I_{c_1}(\zeta) - I_{c_2}(\zeta)| \leq \|c_1-c_2\|_\infty \cdot (1-\cos(\zeta))^{-1},
\]
hence
\begin{eqnarray*}
\|v_{c_1}^\flat-v_{c_2}^\flat\|_\infty &\leq& \|r_{c_1}-r_{c_2}\|_\infty\\
&\leq& \sup_{\varphi\in(0, 2\pi)} \|c_1-c_2\|_\infty \cdot \frac{1}{2}\left|(1-e^{i\varphi}) \cdot  \int_{\pi}^\varphi  (1-\cos(\zeta))^{-1} d\zeta\right|\\
&=&\sup_{\varphi\in(0, 2\pi)}  \|c_1-c_2\|_\infty\cdot |\cos(\varphi/2)|\\
&=& \|c_1-c_2\|_\infty.
\end{eqnarray*}
This implies 
\[
\|F_{c_1}^\flat-F_{c_2}^\flat\|_{\infty} \leq 4\cdot \|c_1-c_2\|_\infty.
\]\qedhere
\end{proof}

\subsection{An integration formula for primitives of bounded cocycle}\label{SecIntegrationFormula}
Let $c \in \mathcal L^\infty_{\rm alt}((S^1)^5)^G$ be a cocycle. Following \cite{HO1} we are going to construct a primitive $p$ of $f$ by integration, using the function $F^\flat_c$ defined in the previous subsection. Let us denote by $n_t$ the element of $G$ represented by the matrix
\[
\left(\begin{matrix} 1+\frac{i}{2}t& -\frac{i}{2} t\\  \frac{i}{2} t&1-\frac{i}{2}t \end{matrix}\right),
\]
and let $N = \{n_t\}$ be the corresponding one-parameter subgroup of $G$. Note that the group $N$ acts freely on $\Omega$ and the $N$-orbits intersect the anti-diagonal 
\[\Delta^{\rm op} \deq \bigl\{ (\phi, 2\pi-\phi) \,\big|\, \phi \in (0, 2\pi) \setminus \{ \pi \} \bigr\} \subset \Omega\]
transversally. Thus every $(\varphi_1, \varphi_2) \in \Omega$ can be written uniquely as
\[
(\varphi_1, \varphi_2) = n_{T(\varphi_1, \varphi_2)}.(\Phi(\varphi_1, \varphi_2), 2\pi - \Phi(\varphi_1, \varphi_2))
\]
with $T(\varphi_1, \varphi_2) \in \R$ and $\Phi(\varphi_1, \varphi_2) \in  (0, 2\pi) \setminus \{ \pi \}$. Since
\[
n_t.\varphi = 2 \cdot {\rm arccot}(-t + \cot(\varphi/2)),
\]
we have
\begin{equation}\label{TAndPhi}
T(\varphi_{1},\varphi_{2}) = -\frac{1}{2} \left( \cot\left(\frac{\varphi_{1}}{2}\right) + \cot\left(\frac{\varphi_{2}}{2}\right) \right)\quad \text{and}\quad
\Phi(\varphi_1, \varphi_2) =  2 \cdot {\rm arccot}\left(\frac{1}{2}\left( \cot\left(\frac{\varphi_{1}}{2}\right)- \cot\left(\frac{\varphi_{2}}{2}\right)\right)\right).
\end{equation}
Now the construction of $p$ is as follows.
\begin{enumerate}
\item Define $f^{(c)}_0: \Omega \to \R$ as the integral
\begin{equation}\label{Deff0}
 f_{0}^{(c)}(\varphi_{1},\varphi_{2}) := \int_{0}^{T(\varphi_{1}, \varphi_{2})} F^{\flat}_{c} \bigl( n_{t}.\Phi(\varphi_{1}, \varphi_{2}), n_{t}.(2\pi-\Phi(\varphi_{1}, \varphi_{2})) \bigr) \, dt.
\end{equation}
\item Finally, define $p_c: (S^1)^{(4)} \to \R$ as
\begin{eqnarray}\label{Defpc}
p_c(e^{i\theta_0}, \dots, e^{i\theta_3}) &:=& \fint c(e^{i\theta}, e^{i\theta_0}, \dots, e^{i\theta_3}) d\theta + f^{(c)}_0(\theta_2-\theta_1, \theta_3-\theta_1) - f^{(c)}_0(\theta_2-\theta_0, \theta_3-\theta_0)\nonumber\\&& + f^{(c)}_0(\theta_1-\theta_0, \theta_3-\theta_0) -f^{(c)}_0(\theta_1-\theta_0, \theta_2-\theta_0).\label{Defp}
\end{eqnarray}
\end{enumerate}
It was established in \cite{HO1} that $p_c$ is indeed bounded and a primitive of $c$. Here we observe:
\begin{lemma}\label{LemmaBounds} The above construction has the following properties:
\begin{enumerate}
\item If the cocycle $c \in \mathcal L^\infty_{\rm alt}((S^1)^5)^G$ is of class $C^k$ for some $k \in \mathbb N \cup \{0, \infty, \omega\}$, then so is $p_c$.
\item If $c_1, c_2 \in \mathcal L^\infty_{\rm alt}((S^1)^5)^G$ are cocycles, then
\[
\|p_{c_1}-p_{c_2}\|_{\infty} \leq  \left(1+\frac{16}{\sqrt 3}\right) \cdot \|c_1-c_2\|_\infty.
\]
\end{enumerate}
\end{lemma}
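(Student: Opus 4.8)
The plan is to establish both assertions by tracking regularity and Lipschitz bounds through each of the two integration steps \eqref{Deff0} and \eqref{Defpc}, using Lemma~\ref{Fcflat} as the input. For part~(i), I would first observe that the change-of-coordinates functions $T$ and $\Phi$ given explicitly in \eqref{TAndPhi} are real-analytic on $\Omega$, since $\cot(\varphi/2)$ and $\arccot$ are real-analytic on the relevant open intervals and the diagonal is excluded. The flow map $(t,\varphi) \mapsto n_t.\varphi = 2\cdot{\rm arccot}(-t+\cot(\varphi/2))$ is likewise jointly real-analytic (hence $C^k$ for every $k$) in its arguments. Granting by Lemma~\ref{Fcflat}(i) that $F_c^\flat$ is of class $C^k$ when $c$ is, the integrand of $f_0^{(c)}$ in \eqref{Deff0} is a composition of $C^k$ maps and is $C^k$ in both the integration variable $t$ and the parameters $(\varphi_1,\varphi_2)$. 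As in the proof of Lemma~\ref{Fcflat}(i), an application of the Leibniz integral rule (for $k\neq\omega$) or term-by-term differentiation together with the fundamental theorem of calculus (for $k=\omega$), combined with the $C^k$-regularity of the variable upper limit $T(\varphi_1,\varphi_2)$, shows that $f_0^{(c)}$ is of class $C^k$ on $\Omega$. Finally $p_c$ in \eqref{Defpc} is a sum of an averaged integral of $c$ (which inherits the regularity of $c$) and four evaluations of $f_0^{(c)}$ precomposed with the affine maps $(\theta_i,\theta_j)\mapsto(\theta_j-\theta_i,\ldots)$, so $p_c$ is $C^k$ as well.

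For part~(ii), the strategy is to bound $\|p_{c_1}-p_{c_2}\|_\infty$ by bounding each summand in \eqref{Defpc}. By linearity of the whole construction in $c$, it suffices to estimate the contribution of $c\deq c_1-c_2$. The averaged term $\fint c(e^{i\theta},e^{i\theta_0},\dots,e^{i\theta_3})\,d\theta$ contributes at most $\|c_1-c_2\|_\infty$, accounting for the summand $1$ in the constant. For the four $f_0^{(c)}$-terms, I would first bound $\|f_0^{(c)}\|_\infty$ by combining Lemma~\ref{Fcflat}(ii), which gives $\|F_{c_1}^\flat-F_{c_2}^\flat\|_\infty\leq 4\|c_1-c_2\|_\infty$, with a uniform bound on the length $|T(\varphi_1,\varphi_2)|$ of the integration path in \eqref{Deff0}. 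Since the integrand in \eqref{Deff0} is uniformly bounded by $\|F_{c_1}^\flat-F_{c_2}^\flat\|_\infty$, one obtains $\|f_0^{(c_1)}-f_0^{(c_2)}\|_\infty\leq 4\|c_1-c_2\|_\infty\cdot\sup_{\Omega}|T(\varphi_1,\varphi_2)|$ provided this supremum is finite. Summing the four $f_0^{(c)}$-terms then yields a contribution of $16\cdot(\sup|T|)\cdot\|c_1-c_2\|_\infty$, and matching the target constant $1+\frac{16}{\sqrt 3}$ forces $\sup_\Omega|T(\varphi_1,\varphi_2)|=\tfrac{1}{\sqrt 3}$.

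The main obstacle is precisely the uniform bound on $|T|$: a priori $T(\varphi_1,\varphi_2)=-\tfrac12(\cot(\varphi_1/2)+\cot(\varphi_2/2))$ is unbounded on all of $\Omega$, since $\cot(\varphi/2)$ blows up as $\varphi\to 0$ or $\varphi\to 2\pi$. The resolution is that $f_0^{(c)}$ need not be evaluated on all of $\Omega$, but only at the points $(\theta_2-\theta_1,\theta_3-\theta_1)$, etc., appearing in \eqref{Defpc}, where the $\theta_i$ are the arguments of a genuine $4$-tuple $(e^{i\theta_0},\dots,e^{i\theta_3})\in(S^1)^{(4)}$; moreover by $G$-invariance and the alternating property one may normalize the configuration, and the relevant supremum of $|T|$ over the normalized orbit representatives should equal $\tfrac1{\sqrt 3}$. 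I would therefore carry out an explicit optimization of $|T(\varphi_1,\varphi_2)|=\tfrac12|\cot(\varphi_1/2)+\cot(\varphi_2/2)|$ subject to the constraint that $(\varphi_1,\varphi_2)$ arises as a difference of angles of a cyclically oriented tuple in a fixed fundamental domain, confirming that the extremum is attained at a symmetric configuration giving the value $\tfrac{1}{\sqrt 3}$. This geometric bookkeeping, rather than any analytic subtlety, is the delicate part; the regularity claims follow routinely once the coordinate maps are seen to be real-analytic.
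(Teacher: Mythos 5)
Part (i) of your proposal matches the paper's argument. The issue is in part (ii), where your overall skeleton (linearity in $c$, the contribution $1$ from the averaged term, the factor $4$ from Lemma \ref{Fcflat}(ii), and the reduction to a uniform bound on $|T|$ at the relevant evaluation points) is right, and you correctly identify the unboundedness of $T$ on $\Omega$ as the crux. But your proposed resolution does not work. Normalizing the configuration by $3$-transitivity and the alternating property of $p_c$ reduces matters to tuples $(1,e^{2\pi i/3},e^{4\pi i/3},e^{i\xi})$, so the four $f_0$-terms are evaluated at points such as $(4\pi/3,\xi)$ and $(2\pi/3,\xi)$ with $\xi$ ranging over all of $(0,2\pi)$; as $\xi\to 0$ or $\xi\to 2\pi$ one has $|T(4\pi/3,\xi)|=\tfrac12|\cot(2\pi/3)+\cot(\xi/2)|\to\infty$. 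More structurally, $(S^1)^{(4)}$ modulo $G\times\mathfrak S_4$ is noncompact (two points of the configuration can collide), so \emph{no} choice of fundamental domain keeps all pairwise angle differences away from $0$ and $2\pi$; the optimization you propose over "normalized orbit representatives'' has supremum $+\infty$, not $1/\sqrt3$.

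The missing ingredient is a symmetry of the function $f_0^{(c)}$ itself, not of $p_c$: the paper recalls from \cite{HO1} that $f_0^{(c)}$ is invariant under the $\mathfrak S_3$-action on $\Omega$ generated by $s_1.(\phi_1,\phi_2)=(-\phi_1,\phi_2-\phi_1)$ and $s_2.(\phi_1,\phi_2)=(\phi_2,\phi_1)$ (equivalently, $(\theta_1,\theta_2,\theta_3)\mapsto f_0^{(c)}(\theta_2-\theta_1,\theta_3-\theta_1)$ is fully symmetric). This lets one replace each of the evaluation points on the lines $(2\pi/3,\xi)$ and $(4\pi/3,\xi)$ by an $\mathfrak S_3$-equivalent point in the compact box $[2\pi/3,5\pi/3]\times[\pi/3,4\pi/3]$, on which $|\cot(\varphi_i/2)|\le 1/\sqrt3$ and hence $|T|\le 1/\sqrt3$; the constant $16/\sqrt3$ then comes out as $4\cdot 4\cdot\max(|T|)$ exactly as in your accounting. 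Without invoking this $\mathfrak S_3$-invariance (which is a nontrivial property of the specific primitive constructed in \cite{HO1} and is not a formal consequence of $G$-invariance or alternation of $p_c$), your argument cannot close, so this is a genuine gap.
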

\begin{proof} (i) If $c$ is of class $C^k$, then $F^\flat_c$ is of class $C^k$ by Lemma \ref{Fcflat}. Also, $T$ and $\Phi$ are real analytic by the explicit formulas, in particular of class $C^k$. It then follows from the Leibniz integral rule (respectively the fundamental theorem of calculus if $k = \omega$) that $f_0^{(c)}$, and hence also $p_c$ is of class $C^k$.

(ii)  Observe, firstly, that $3$-transitivity of $G$ on $S^1$ together with $G$-invariance of $p_{c_1}$ and $p_{c_2}$ implies
\[
\|p_{c_1} - p_{c_2}\|_\infty = \sup_{z \in S^1} |p_{c_1}(1, e^{2\pi i/3}, e^{4\pi i/3}, z) - p_{c_2}(1, e^{2\pi i/3}, e^{4\pi i/3}, z)|
\]
Since
\begin{eqnarray*}
p_{c_j}(1, e^{2\pi i/3}, e^{4\pi i/3}, e^{i\xi}) &=&  \fint c(e^{i\theta}, 1, e^{2\pi i/3}, e^{4\pi i/3}, e^{i\xi}) d\theta\\&& + f_{0}^{(c_j)}(2\pi/3,\xi-2\pi/3) - f_{0}^{(c_j)}(4\pi/3,\xi) + f_{0}^{(c_j)}(2\pi/3,\xi) - f_{0}^{(c_j)}(2\pi/3,4\pi/3),
\end{eqnarray*}
this implies
\begin{eqnarray}\label{Continuity1}
\|p_{c_1} -p_{c_2}\|_\infty &\leq& \|c_1-c_2\|_\infty + 3\cdot \sup_{\xi \in (0, 2\pi) \setminus\{2\pi/3\}}|f_0^{(c_1)}(2\pi/3, \xi)-f_0^{(c_2)}(2\pi/3, \xi)| \\
&&+ \sup_{\xi \in (0, 2\pi) \setminus\{4\pi/3\}}|f_0^{(c_1)}(4\pi/3, \xi)-f_0^{(c_2)}(4\pi/3, \xi)|.\nonumber
\end{eqnarray}
Since
\[
f_0^{(c_j)}(\varphi_1, \varphi_2) =  \int_{0}^{T(\varphi_{1}, \varphi_{2})} F^{\flat}_{c_j} \bigl( n_{t}.\Phi(\varphi_{1}, \varphi_{2}), n_{t}.(2\pi-\Phi(\varphi_{1}, \varphi_{2})) \bigr) \, dt
\]
and, by Lemma \ref{Fcflat}, $\|F^{\flat}_{c_1}-F^{\flat}_{c_2}\|_\infty \leq 4 \cdot \|c_1 - c_2\|_\infty $, we have
\[
|f_0^{(c_1)}(\varphi_1, \varphi_2)-f_0^{(c_2)}(\varphi_1, \varphi_2)| \leq 4 \cdot \|c_1 - c_2\|_\infty \cdot |T(\varphi_1, \varphi_2)| = 2 \cdot \|c_1 - c_2\|_\infty \cdot\left| \cot\left(\frac{\varphi_{1}}{2}\right) + \cot\left(\frac{\varphi_{2}}{2}\right) \right| .
\]
Note, however, that this does not yield any useful bound, since $\cot$ is unbounded. To circumvent this problem we recall from \cite{HO1} that the functions $f_0^{(c_j)}$ are invariant under the action of the symmetric group $\mathfrak S_3$ on $\Omega$ in which the generators $s_1$ and $s_2$ act by 
\[
s_{1}.(\phi_{1}, \phi_{2}) = (-\phi_{1}, \phi_{2}-\phi_{1}), \quad s_{2}.(\phi_{1}, \phi_{2}) = (\phi_{2}, \phi_{1}).
\]
Under this action, every point on either of the lines $(2\pi/3, \xi)$ and $(4\pi/3, \xi)$ is equivalent to a point inside the box $[2\pi/3, 5\pi/3] \times [\pi/3, 4\pi/3]$. Inserting everything into \eqref{Continuity1} we thus obtain
\[
\|p_{c_1} -p_{c_2}\|_\infty \leq  \|c_1-c_2\|_\infty \cdot \left(1+8 \cdot \left(\max_{\varphi_1 \in [2\pi/3, 5\pi/3]} \left|\cot\left(\frac{\varphi_1}{2}\right)\right| +\max_{\varphi_2 \in [\pi/3, 4\pi/3]} \left|\cot\left(\frac{\varphi_1}{2}\right|\right) \right|\right)
\]
Since both maxima are equal to $1/\sqrt 3$ we obtain
\[
\|p_{c_1}-p_{c_2}\|_\infty \leq \left(1+\frac{16}{\sqrt 3}\right) \cdot \|c_1-c_2\|_\infty.\qedhere
\]
\end{proof}

\subsection{Regularity and continuity}
We are now in a position to deduce the remaining two parts of Theorem \ref{IntroSummary}.
\begin{corollary}[Regularity and Continuity]\item
 \begin{enumerate}
\item Assume that $R \in L^\infty(\mathcal P_2)$ is essentially bounded and satisfies \eqref{6termlongIntro}, and let $C \in \R$ and $k \in \mathbb N \cup\{0, \infty, \omega\}$. Then $L^{(R, C)}$ is of class $C^k$ if and only if $R$ is of class $C^k$.
\item Assume that $R_1, R_2 \in L^\infty(\mathcal P_2)$ are essentially bounded and satisfy \eqref{6termlongIntro}, and let $C_1, C_2 \in \R$. Then
\[
\|L^{(R_1, C_1)}- L^{(R_2, C_2)}\|_\infty\leq \left(1+\frac{16}{\sqrt 3}\right) \cdot \|R_1-R_2\|_\infty + \left(1+\frac{8}{\sqrt 3}\right) \cdot |C_1-C_2|.
\]
\end{enumerate}
\end{corollary}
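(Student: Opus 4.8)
The plan is to reduce both assertions to the two parts of Lemma~\ref{LemmaBounds} by identifying the abstract solution $L^{(R,C)}$ with the explicitly constructed primitive $p_c$. First I would set up this identification. By Corollary~\ref{ExUn}(iii) it suffices to treat $C=0$, since $L^{(R,C)} = L^{(R-C/2,0)} + C/2$; moreover, as constants lie in $\ker \tau^4$ (immediate from the formula in Proposition~\ref{6termexplicit}(ii)), the function $R-C/2$ again satisfies \eqref{6termlongIntro}, so that $c := {\rm ext}_5(R-C/2) \in \mathcal L^\infty_{\rm alt}((S^1)^5)^G$ is a $G$-invariant alternating cocycle by Corollary~\ref{CorDictionary}. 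The function $p_c$ of Subsection~\ref{SecIntegrationFormula} is a bounded primitive of $c$ (by \cite{HO1}/Theorem~\ref{Input2}), and by the cohomological dictionary ${\rm ext}_4(L^{(R-C/2,0)})$ is likewise a primitive of $c$. The uniqueness in Corollary~\ref{ExUn}(i) forces these primitives to agree, whence
\[
L^{(R,C)} = {\rm res}_4(p_c) + C/2, \qquad c = {\rm ext}_5(R-C/2).
\]

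For part (i), the ``only if'' direction is immediate from \eqref{PerturbedSpenceAbel}: by the formula for $\tau^3$ in Proposition~\ref{6termexplicit}(i), the function $R$ is a finite sum of copies of $L^{(R,C)}$ precomposed with the real-analytic coordinate changes $(x,y)\mapsto x/y,\ (y-1)/(x-1),\ x(y-1)/(y(x-1))$, and therefore inherits the $C^k$ regularity of $L^{(R,C)}$. For the converse I would chain regularity through the displayed identification: ${\rm ext}_5$ is precomposition with the (real-analytic) cross ratio coordinates $\Lambda$ and Cayley transform $\mathcal C$, so $C^k$ regularity of $R$ passes to $c$; Lemma~\ref{LemmaBounds}(i) then gives $C^k$ regularity of $p_c$; and ${\rm res}_4$, again precomposition with real-analytic maps, transfers this to ${\rm res}_4(p_c)$, hence to $L^{(R,C)}$.

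For part (ii) I would track the constants through the same identification. Writing $c_j := {\rm ext}_5(R_j - C_j/2)$ and using that ${\rm ext}$ and ${\rm res}$ are mutually inverse, norm-preserving bijections on $G$-invariant alternating functions (their essential suprema are attained on a set of orbit representatives parametrized by the cross ratio coordinates), linearity of ${\rm res}_4$ gives
\[
\|L^{(R_1,C_1)} - L^{(R_2,C_2)}\|_\infty \le \|p_{c_1}-p_{c_2}\|_\infty + \tfrac{1}{2}|C_1-C_2|.
\]
Lemma~\ref{LemmaBounds}(ii) bounds the first term by $(1+16/\sqrt 3)\|c_1-c_2\|_\infty$, while norm-preservation and linearity of ${\rm ext}_5$ yield $\|c_1-c_2\|_\infty = \|(R_1-R_2)-\tfrac12(C_1-C_2)\|_\infty \le \|R_1-R_2\|_\infty + \tfrac12|C_1-C_2|$. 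Substituting and collecting the coefficient of $|C_1-C_2|$, namely $\tfrac12(1+16/\sqrt3)+\tfrac12 = 1+8/\sqrt3$, produces exactly the claimed inequality.

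The main obstacle I anticipate is the first step: justifying cleanly that the abstractly unique solution $L^{(R,C)}$ equals the concretely constructed ${\rm res}_4(p_c)$, which requires applying the uniqueness of Corollary~\ref{ExUn} at the level of the boundary complex (comparing two primitives of $c$) rather than directly to $L$. A secondary point needing care is the verification that ${\rm ext}$ and ${\rm res}$ are $L^\infty$-isometries, since this is precisely what allows the constants of Lemma~\ref{LemmaBounds}(ii) to transfer verbatim to the final estimate; once both are in place, the regularity and continuity assertions follow formally.
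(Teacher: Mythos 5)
Your proposal is correct and follows essentially the same route as the paper: both reduce to $C=0$ via $L^{(R,C)}=L^{(R-C/2,0)}+C/2$, identify $L^{(R-C/2,0)}$ with ${\rm res}(p_c)$ for $c={\rm ext}(R-C/2)$ using the cohomological dictionary and uniqueness, and then invoke the two parts of Lemma~\ref{LemmaBounds} together with the fact that ${\rm ext}$ and ${\rm res}$ are real-analytic changes of variables preserving sup-norms. The constant bookkeeping $\tfrac12(1+16/\sqrt3)+\tfrac12=1+8/\sqrt3$ matches the paper's computation exactly.
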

\begin{proof} (i) If $L^{(R,C)}$ is of class $C^k$, then $R$ is of class $C^k$ by \eqref{PerturbedSpenceAbel}. Conversely, assume that $R$ is of class $C^k$. Since cross ratio coordinates are real-analytic, it follows that $c := {\rm ext}(R-C/2)$ is of class $C^k$, and thus Lemma \ref{LemmaBounds} shows that its unique primitive $p_c$ is of class $C^k$. Note that by Corollary \ref{CorDictionary} we have $L^{(R-C/2,0)} = {\rm res}(p_c)$. It thus follows from the explicit formula for ${\rm res}$ and real-analyticity of the Cayley transform that $L^{(R-C/2,0)}$ is of class $C^k$. We then deduce with Corollary \ref{ExUn} that $L^{(R,C)} = L^{(R-C/2,0)}+C/2$ is of class $C^k$.

(ii) Assume first that $C = 0$ and let $c_j := {\rm ext}(R_j)$ and $p_j := {\rm ext}(L^{(R_j, 0)})$ We then deduce from Lemma \ref{LemmaBounds} and Corollary \ref{CorDictionary} that
\[\|L^{(R_1, 0)}-L^{(R_2, 0)}\|_\infty \leq \|p_1-p_2\|_\infty \leq \left(1+\frac{16}{\sqrt 3}\right) \cdot \|c_1-c_2\|_\infty = \left(1+\frac{16}{\sqrt 3}\right) \cdot \|R_1-R_2\|_\infty.\]
In the general case we obtain
\begin{eqnarray*}
\|L^{(R_1, C_1)}- L^{(R_2, C_2)}\|_\infty &=& \| L^{(R_1-C_1/2,0)}+C_1/2- L^{(R_2-C_2/2,0)}-C_2/2\|_\infty \\
&\leq&\| L^{(R_1+C_1/2,0)} -  L^{(R_2+C_2/2,0)}\|_\infty  + \frac 1 2 \cdot |C_1-C_2|\\
&\leq&  \left(1+\frac{16}{\sqrt 3}\right) \cdot \|R_1+C_1/2 - R_2-C_2/2\|_\infty + \frac 1 2 \cdot |C_1-C_2|\\
&\leq&  \left(1+\frac{16}{\sqrt 3}\right) \cdot \|R_1-R_2\|_\infty + \left(1+\frac{8}{\sqrt 3}\right) \cdot |C_1-C_2|.
\end{eqnarray*}
\end{proof}

Numerically the constants $1+\frac{16}{\sqrt 3}$ and $1+\frac{8}{\sqrt 3}$ are given by $10.2376...$ and $5.6188...$ respectively.

\subsection{A formula for $L^{(R,C)}$}\label{SecExplicit} We have seen that regularity and continuity of the functions $L^{(R,C)}$ are consequences of the above integration formula. We can actually make this formula more explicit. Although this was not necessary for the above proofs, it may be helpful towards a better understanding of these functions. In the sequel we fix an arbitrary constant $C\in \R$ and a right hand side $R \in L^\infty(\mathcal P_2)$ satisfying \eqref{6termlongIntro}. Define
\[
c:= c^{(R, C)} := {\rm ext}(R-C/2) \quad \text{and} \quad F^{\flat}_{(R,C)} := F^\flat_c.
\]
By Corollary \ref{CorDictionary}, $c \in L^\infty_{\rm alt}((S^1)^5)^G$ is a cocycle and hence $c = dp_c$ for some unique primitive $p_c \in  L^\infty_{\rm alt}((S^1)^4)^G$. Note that an explicit formula for $p_c$ was given in \eqref{Defpc} in terms of the various auxiliary functions defined in Subsections \ref{SecIntegrand} and \ref{SecIntegrationFormula} above.

Once the function $p_c$ has been determined, another application of Corollary \ref{CorDictionary} yields \[L^{(R-C/2, 0)} = {\rm res}(p_c),\] and hence $L^{(R,C)} = {\rm res}(p_c) +C/2$ by Corollary \ref{ExUn}. Explicitly this means that for $x \in (0,1)$ we have
\begin{equation}\label{LRCFormula}
L^{(R,C)}(x) =  p_c(-i, 1, -1, \mathcal C(x)) + C/2.
\end{equation}
Since the formula for $p_c$ involves the arguments of its entries, we need to determine the argument of $\mathcal C(x)$. Since $|\mathcal C(x)|= 1$ we have $\mathcal C(x) = e^{i\theta(x)} $ for some $\theta(x) \in [0, 2\pi)$. In fact, since $x \in (0,1)$ and the Cayley transform maps $(0,1, \infty)$ to $(-1, -i, 1)$, the point $\mathcal C(x)$ lies on the circle segment between $-1$ and $-i$. It follows that $(1, -1, \mathcal C(x), -i)$ is positively oriented and that
\[
0 < \pi < \theta(x) < 3\pi/2.
\]
We have 
\[
\cos(\theta(x)) + i \sin (\theta(x)) = e^{i\theta(x)} = \mathcal C(x) = \frac{x-i}{x+i} = \frac{(x-i)^2}{x^2+1} = \frac{x^2-1}{x^2+1} + i\frac{-2x}{x^2+1},
\]
hence 
\[
\tan(\theta(x)) =\frac{\sin(\theta(x))}{\cos(\theta(x))} =\frac{2x}{1-x^2}.
\]
Since $\theta(x) \in (\pi, 3\pi/2)$ and the main branch of $\arctan$ takes value in $(-\pi/2, \pi/2)$, we have
\begin{equation}\label{thetax}
\theta(x) = \arctan\left(\frac{2x}{1-x^2}\right) + \pi = 2\cdot \arctan(x)+\pi.
\end{equation}
By \eqref{LRCFormula} and \eqref{Defpc}  we now have
\begin{eqnarray*}
L^{(R,C)}(x) = p_c(-i, 1, -1, \mathcal C(x))+ C/2 &=& -p_c(e^{i\cdot 0}, e^{i \cdot \pi},  e^{i\theta(x)}, e^{i\cdot  \frac{3\pi}{2}}) +  C/2\\
&=&-\fint c\left(e^{i\psi}, e^{i\cdot 0}, e^{i\cdot \pi},  e^{i \theta(x)}, e^{i \cdot \frac{3\pi}{2}}\right) d\psi -  f^{(c)}_0\left(\theta(x)-\pi, \pi/2\right)\\
&& + f^{(c)}_0\left(\theta(x), 3\pi/2\right)- f^{(c)}_0\left(\pi, 3\pi/2 \right)+f^{(c)}_0\left(\pi,\theta(x)\right)+C/2,
\end{eqnarray*}
where, by \eqref{Deff0},
\[
 f_{0}^{(c)}(\varphi_{1},\varphi_{2}) := \int_{0}^{T(\varphi_{1}, \varphi_{2})} F^{\flat}_{c} \bigl( n_{t}.\Phi(\varphi_{1}, \varphi_{2}), n_{t}.(2\pi-\Phi(\varphi_{1}, \varphi_{2})) \bigr) \, dt.
\]
Here $T(\varphi_1, \varphi_2)$ and $\Phi(\varphi_1, \varphi_2)$ are given by \eqref{TAndPhi} and $F^\flat_c$ is defined in terms of $c$ in Subsection \ref{SecIntegrand}. It remains to compute the numbers $T(\varphi_1, \varphi_2)$ and $\Phi(\varphi_1, \varphi_2)$ for 
\[
(\varphi_1, \varphi_2) \in \mathcal F := \left\{\left(\theta(x)-\pi, \pi/2\right), \left(\theta(x), 3\pi/2\right), \left(\pi, 3\pi/2\right), \left(\pi, \theta(x)\right)\right\}.
\]
\begin{lemma} The following table provides the values of $T(\varphi_1, \varphi_2)$, $\Phi(\varphi_1, \varphi_2)$, $n_t.\Phi(\varphi_1, \varphi_2)$ and $n_t.(2\pi-\Phi(\varphi_1, \varphi_2))$ for $(\varphi_1, \varphi_2) \in \mathcal F$.

\begin{tabular}{|c|c||c|c||c|c||c|c|}
\hline
$\varphi_1$& $\varphi_2$ & $ \cot\left(\frac{\varphi_{1}}{2}\right)$ & $ \cot\left(\frac{\varphi_{2}}{2}\right)$ & $T(\varphi_1, \varphi_2)$ & $\Phi(\varphi_1, \varphi_2)$ & $n_t.\Phi(\varphi_1, \varphi_2)$ & $n_t.(2\pi-\Phi(\varphi_1, \varphi_2))$\\
\hline
$\theta(x)-\pi$&$\pi/2$&$\frac 1 x$& $1$&$-\frac{1+x}{2x}$& $2\cdot {\rm arccot}\left(\frac{1-x}{2x} \right)$ & $2\cdot {\rm arccot}\left(-t+\frac{1-x}{2x} \right)$&  $2\cdot {\rm arccot}\left(-t-\frac{1-x}{2x}\right)$\\[0.1cm]
$\theta(x)$&$3\pi/2$&$-x$ &$-1$&$\frac{x+1}{2}$& $2\cdot {\rm arccot}\left(\frac{1-x}{2} \right)$ & $2\cdot {\rm arccot}\left(-t+\frac{1-x}{2} \right)$& $2\cdot {\rm arccot}\left(-t-\frac{1-x}{2} \right)$\\[0.1cm]
$\pi$&$3\pi/2$&$0$ & $-1$& $\frac{1}{2}$& $2\cdot {\rm arccot}\left(\frac{1}{2} \right)$ & $2\cdot {\rm arccot}\left(-t+\frac{1}{2} \right)$& $2\cdot {\rm arccot}\left(-t-\frac{1}{2} \right)$\\[0.1cm]
$\pi$&$\theta(x)$&$0$ & $-x$ &$\frac{x}{2}$& $2\cdot {\rm arccot}\left(\frac x 2 \right)$ & $2\cdot {\rm arccot}\left(-t+\frac x 2 \right)$&$2\cdot {\rm arccot}\left(-t-\frac x 2 \right)$ \\[0.1cm]
\hline
\end{tabular}
\end{lemma}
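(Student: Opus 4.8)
The plan is to verify the table entry by entry; the whole statement is a direct trigonometric computation once the formulas \eqref{TAndPhi} and the orbit formula $n_t.\varphi = 2\cdot\arccot(-t+\cot(\varphi/2))$ are in hand. I would organize the work by columns. The first two data columns record the half-angle cotangents $\cot(\varphi_1/2)$ and $\cot(\varphi_2/2)$; once these are known, the entries for $T$ and $\Phi$ follow by plugging directly into the explicit expressions $T(\varphi_1,\varphi_2) = -\tfrac{1}{2}(\cot(\varphi_1/2)+\cot(\varphi_2/2))$ and $\Phi(\varphi_1,\varphi_2) = 2\arccot\!\big(\tfrac{1}{2}(\cot(\varphi_1/2)-\cot(\varphi_2/2))\big)$ from \eqref{TAndPhi}.

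First I would compute the half-angle cotangents. For the angles $\pi/2$, $3\pi/2$ and $\pi$ these are the standard special values $\cot(\pi/4)=1$, $\cot(3\pi/4)=-1$ and $\cot(\pi/2)=0$. The only non-standard inputs are the two values involving $\theta(x)$, and here I would invoke \eqref{thetax}, that is $\theta(x)=2\arctan(x)+\pi$. This gives $(\theta(x)-\pi)/2=\arctan(x)$, whence $\cot((\theta(x)-\pi)/2)=\cot(\arctan x)=1/x$, and $\theta(x)/2=\arctan(x)+\pi/2$, whence $\cot(\theta(x)/2)=-\tan(\arctan x)=-x$ using the shift identity $\cot(\alpha+\pi/2)=-\tan\alpha$. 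Substituting the four resulting pairs of cotangents into the formulas for $T$ and $\Phi$ reproduces the third and fourth data columns of the table.

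For the last two columns I would apply the orbit formula. Writing $\Phi=2\arccot(A)$ with $A=\tfrac{1}{2}(\cot(\varphi_1/2)-\cot(\varphi_2/2))$, one has $\cot(\Phi/2)=A$, so $n_t.\Phi = 2\arccot(-t+A)$, which is exactly the stated entry in each row. For the anti-diagonal partner I would use $\cot((2\pi-\Phi)/2)=\cot(\pi-\Phi/2)=-\cot(\Phi/2)=-A$, which yields $n_t.(2\pi-\Phi)=2\arccot(-t-A)$ and accounts for the sign flip on $A$ in the final column.

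I do not expect any genuine obstacle here: the content is bookkeeping of elementary identities, and the hardest single step is the evaluation of $\cot$ at the $\theta(x)$-dependent angles, where \eqref{thetax} does all the work. The only points requiring a little care are the branch conventions, and one should check that each $\Phi$ lands in the range on which $\arccot$ and the orbit parametrization are being used. This is guaranteed by the geometric setup of Subsection \ref{SecIntegrationFormula}, where $(\varphi_1,\varphi_2)\in\Omega$ and the $N$-orbit meets the anti-diagonal $\Delta^{\rm op}$ transversally, so that $\Phi(\varphi_1,\varphi_2)\in(0,2\pi)\setminus\{\pi\}$ and the representation $(\varphi_1,\varphi_2)=n_{T}.(\Phi,2\pi-\Phi)$ is the legitimate one.
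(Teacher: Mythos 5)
Your proposal is correct and follows essentially the same route as the paper's proof: compute the half-angle cotangents (using $\theta(x)=2\arctan(x)+\pi$ for the two nonstandard angles), substitute into the explicit formulas for $T$ and $\Phi$, and obtain the last two columns from $n_t.(2\arccot(A))=2\arccot(-t+A)$ together with $\cot(\pi-\arccot(A))=-A$. No gaps.
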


\begin{proof} Concerning the third and fourth column we observe that  $\cot(\pi/4) = 1$, $\cot(\pi/2) = 0$ and $\cot(3\pi/4) = -1$ and deduce from \eqref{thetax} that
\[\cot(\theta(x)/2) = \cot(\arctan(x)+\pi/2) = -\tan(\arctan(x)) = -x,\]
whereas
\[
\cot((\theta(x)-\pi)/2)  = \cot(\arctan(x)) = \tan(\arctan(x))^{-1} = 1/x.
\]
Then the 5th and 6th column are immediate consequences of \eqref{TAndPhi}, and the final two columns then follow from the fomulas
\[
n_t.(2 \cdot {\rm arccot}(\theta)) = 2\cdot  {\rm arccot}(-t+ \theta),
\]
and
\begin{eqnarray*}
n_t.(2\pi - 2 \cdot {\rm arccot}(\theta)) = 2\cdot  {\rm arccot}(-t+\cot(\pi-{\rm arccot}(\theta))) = 2\cdot {\rm arccot(-t-\theta)}.
\end{eqnarray*}
\end{proof}

We can summarize our computations as follows:
\begin{proposition}\label{PropGeneralFormula} For all $x \in (0,1)$ we have
\begin{eqnarray*}
L^{(R,C)}(x) &=& C/2 - \frac{1}{2\pi}\int_0^{2\pi} c^{(R,C)}\left(e^{i\psi}, 1, -1, \mathcal C(x), -i\right) d\psi\\
&& - \int_0^{\frac{-x-1}{2x}} F_{(R,C)}^{\flat}\left(2\cdot {\rm arccot}\left(-t+\frac{1-x}{2x} \right),2\cdot {\rm arccot}\left(-t-\frac{1-x}{2x} \right)\right)dt\\
&& + \int_0^{\frac{x+1}{2}} F_{(R,C)}^{\flat}\left(2\cdot {\rm arccot}\left(-t+\frac{1-x}{2} \right),2\cdot {\rm arccot}\left(-t-\frac{1-x}{2} \right)\right)dt\\
&&-  \int_0^{\frac{1}{2}}  F_{(R,C)}^{\flat}\left(2\cdot {\rm arccot}\left(-t+\frac{1}{2} \right), 2\cdot {\rm arccot}\left(-t-\frac{1}{2} \right)\right)dt\\
&& + \int_0^{\frac x 2} F_{(R,C)}^{\flat}\left(2\cdot {\rm arccot}\left(-t+\frac x 2 \right), 2\cdot {\rm arccot}\left(-t-\frac x 2 \right)\right)dt.
\end{eqnarray*}\qed
\end{proposition}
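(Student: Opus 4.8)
The plan is simply to assemble the computations already carried out in this subsection, so the proof is essentially bookkeeping. The starting point is the identity \eqref{LRCFormula}, namely $L^{(R,C)}(x) = p_c(-i, 1, -1, \mathcal C(x)) + C/2$. First I would use the alternating property of $p_c \in L^\infty_{\rm alt}((S^1)^4)^G$ to reorder the arguments: the tuple $(-i, 1, -1, \mathcal C(x))$ is obtained from $(1, -1, \mathcal C(x), -i)$ by the cyclic shift $\sigma = (1\,4\,3\,2)$, which is an odd permutation, so this contributes a global sign $-1$ and gives $L^{(R,C)}(x) = -p_c(1, -1, \mathcal C(x), -i) + C/2$. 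Writing $1 = e^{i\cdot 0}$, $-1 = e^{i\pi}$, $\mathcal C(x) = e^{i\theta(x)}$ and $-i = e^{i\cdot 3\pi/2}$ with $\theta(x) \in (\pi, 3\pi/2)$ as in \eqref{thetax} (which also guarantees that the tuple is cyclically oriented, so that the formula \eqref{Defpc} for $p_c$ applies), I then expand $p_c$ via \eqref{Defpc} with $(\theta_0, \theta_1, \theta_2, \theta_3) = (0, \pi, \theta(x), 3\pi/2)$.

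This expansion reproduces exactly the expression displayed just above the Lemma: the averaged cocycle term, which after recognizing $\fint = \frac{1}{2\pi}\int_0^{2\pi}$ becomes $-\frac{1}{2\pi}\int_0^{2\pi} c^{(R,C)}(e^{i\psi}, 1, -1, \mathcal C(x), -i)\,d\psi$, together with four correction terms $\mp f_0^{(c)}$ evaluated at the four pairs constituting $\mathcal F = \{(\theta(x)-\pi, \pi/2),\, (\theta(x), 3\pi/2),\, (\pi, 3\pi/2),\, (\pi, \theta(x))\}$. The signs of these four terms are dictated by the alternating signs in \eqref{Defpc} combined with the overall factor $-1$ from the reordering above.

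Next I would substitute the table of the Lemma into the definition \eqref{Deff0} of $f_0^{(c)}$. For each $(\varphi_1, \varphi_2) \in \mathcal F$ the table supplies the upper limit $T(\varphi_1, \varphi_2)$ of the integral together with the two arguments $n_t.\Phi(\varphi_1, \varphi_2) = 2\cdot {\rm arccot}(-t + \theta)$ and $n_t.(2\pi - \Phi(\varphi_1, \varphi_2)) = 2\cdot {\rm arccot}(-t - \theta)$ of $F^\flat_{(R,C)}$. Reading off the four rows in order, and noting that $\frac{-x-1}{2x} = -\frac{1+x}{2x}$, yields precisely the four integrals appearing in the statement with their stated limits and signs; collecting the five terms completes the proof. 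There is no genuine obstacle here, since the entire analytic content has already been packaged into the Lemma and into the constructions of $F^\flat_c$ and $p_c$ in Subsections \ref{SecIntegrand} and \ref{SecIntegrationFormula}. The only points requiring care are purely combinatorial: tracking the sign produced by the cyclic reordering of the alternating function $p_c$ against the alternating signs in \eqref{Defpc}, and matching each row of the table to the correct correction term.
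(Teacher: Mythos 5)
Your proposal is correct and follows exactly the paper's own derivation: starting from \eqref{LRCFormula}, picking up the sign $-1$ from the odd cyclic reordering of the alternating function $p_c$, expanding via \eqref{Defpc} with $(\theta_0,\theta_1,\theta_2,\theta_3)=(0,\pi,\theta(x),3\pi/2)$, and substituting the table of the preceding Lemma into \eqref{Deff0}. The signs, integration limits and arguments all match the paper's computation, so there is nothing to add.
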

We observe for later use that
\[
\mathcal F \subset \Omega^+ := \left\{(\varphi_1, \varphi_2) \in (0, 2\pi)^2\mid \varphi_1 < \varphi_2 \right\},
\]
hence in order to compute $L^{(R,C)}(x)$ we only need to know $F^{\flat}_{(R,C)}$ on $\Omega^+$. Similarly, we only need to compute the integral
\[
 \frac{1}{2\pi}\int_0^{2\pi} c^{(R,C)}\left(e^{i\psi},e^{i\phi_0}, e^{i\phi_1}, e^{i\phi_2}, e^{i\phi_3} \right) d\psi
\]
for $0\leq \phi_0< \phi_1 < \phi_2< \phi_3 < 2\pi$. This will allow us to avoid some technical case distinctions later on.

\section{Rogers' dilogarithm revisited}\label{SecRogers}

\subsection{The cocycle associated with $L_2$ and the general strategy}
By definition, Rogers' dilogarithm $L_2$ satisfies the Spence-Abel equation with right-hand side $R \equiv 0$ and the reflection symmetry with constant $C = \zeta(2)$. In our previous notation this means that $L_2 = L^{(0, \zeta(2))}$. Thus the relevant cocycle entering into the formula for $L_2$ in Proposition \ref{PropGeneralFormula} is $c := c^{(0, \zeta(2))}$. By definition, the cocycle $c$ is constant on $(S^1)^{(5,+)}$ where it takes the value $-\zeta(2)/2$. Given $\underline{z} = (z_0, \dots, z_4) \in (S^1)^{(5)}$ let $\sigma$ be a permutation of $\{0, \dots, 5\}$ such that  $(z_{\sigma(0)}, \dots, z_{\sigma(4)}) \in (S^1)^{(5,+)}$. Then $\sigma$ is determined up to a cyclic (hence even) permutation, and thus the sign $(-1)^\sigma$ depends only on $\underline{z}$. If we denote this sign by $(-1)^{\underline z}$, then
\[
c(\underline{z}) = -\frac{\zeta(2)}{2} \cdot (-1)^{\underline{z}}.
\]
In particular, $c$ is a multiple of ${\rm or} \cup {\rm or}$, where ${\rm or}$ is the orientation cocycle. Our goal for the rest of this section is to make the formula in Proposition \ref{PropGeneralFormula} more explicit for this specific cocycle. Our computation proceeds in three stages: Firstly, we compute in Subsection \ref{SecBasicInt} some basic integrals of $c$. Using these basic integrals we derive in Subsection \ref{SecIntL2} the integrand appearing in Proposition \ref{PropGeneralFormula} for our specific $c$. Finally, in Subsection \ref{L2Final} we put everything together and derive the formula for $L_2$ stated in the introduction.

\subsection{Basic integrals}\label{SecBasicInt}
We now turn to the first step of the outline given in the previous subsection and compute some basic integrals related to the cocycle $c(\underline{z}) = -\frac{\zeta(2)}{2} \cdot (-1)^{\underline{z}}$.
\begin{lemma}\label{I1} If $0\leq \theta_0 < \dots < \theta_3 < 2\pi$, then
\[
I_1(e^{i\theta_0}, \dots, e^{i\theta_3}) := \fint c(e^{i\psi}, e^{i\theta_0}, \dots, e^{i\theta_3}) d\psi = -\frac{\zeta(2)}{2\pi}\left(\theta_0-\theta_1+\theta_2 - \theta_3 + \pi \right).
\]
\end{lemma}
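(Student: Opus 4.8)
The plan is to exploit the fact that the integrand takes only the two values $\pm\zeta(2)/2$. Recall from the preceding discussion that $c(\underline{z}) = -\frac{\zeta(2)}{2}\cdot(-1)^{\underline{z}}$, where $(-1)^{\underline{z}}$ is the sign of any permutation bringing the $5$-tuple $\underline{z}$ into cyclic order. Fixing $\theta_0 < \theta_1 < \theta_2 < \theta_3$ in $[0, 2\pi)$, the four points $e^{i\theta_j}$ cut $S^1$ into four open arcs, and as the running variable $\psi$ ranges over $(0,2\pi)$ avoiding the $\theta_j$, the function $\psi \mapsto c(e^{i\psi}, e^{i\theta_0}, \dots, e^{i\theta_3})$ is constant on each arc. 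Hence the average $I_1$ is simply a weighted sum of the arc lengths, and the whole computation reduces to determining the sign on each arc.

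First I would record the four arcs explicitly: $(\theta_0, \theta_1)$, $(\theta_1, \theta_2)$, $(\theta_2, \theta_3)$ and the wrap-around arc $(\theta_3, 2\pi) \cup (0, \theta_0)$, of lengths $\theta_1 - \theta_0$, $\theta_2 - \theta_1$, $\theta_3 - \theta_2$ and $2\pi - \theta_3 + \theta_0$ respectively. Then for each arc I would insert $\psi$ into its correct cyclic position among the $\theta_j$ and read off the permutation $\sigma$ that sorts $\underline{z} = (\psi, \theta_0, \theta_1, \theta_2, \theta_3)$ into cyclic order. For instance, when $\psi \in (\theta_0, \theta_1)$ the sorted tuple is $(\theta_0, \psi, \theta_1, \theta_2, \theta_3)$, obtained from $\underline{z}$ by the transposition exchanging the first two entries, so $(-1)^{\underline{z}} = -1$ and $c = +\zeta(2)/2$; the remaining three arcs give a $3$-cycle, a $4$-cycle and the full cyclic shift, with signs $+1$, $-1$, $+1$, so that $c$ equals $-\zeta(2)/2$, $+\zeta(2)/2$, $-\zeta(2)/2$ on $(\theta_1,\theta_2)$, $(\theta_2,\theta_3)$ and the wrap-around arc. (Since a cyclic shift of five elements is an even permutation, the sign $(-1)^{\underline{z}}$ is well defined, consistent with the convention recalled above, and with the value $-\zeta(2)/2$ on the already positively oriented case.)

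Finally I would assemble the integral: multiplying each arc length by the corresponding value $\pm\zeta(2)/2$ and summing gives
\[
\int_0^{2\pi} c(e^{i\psi}, e^{i\theta_0}, \dots, e^{i\theta_3})\, d\psi = \frac{\zeta(2)}{2}\bigl[(\theta_1-\theta_0) - (\theta_2-\theta_1) + (\theta_3-\theta_2) - (2\pi - \theta_3 + \theta_0)\bigr],
\]
and the bracket simplifies to $-2(\theta_0 - \theta_1 + \theta_2 - \theta_3 + \pi)$; dividing by $2\pi$ yields the claimed value. The only genuinely delicate point is the bookkeeping of the permutation signs in the four cases—especially handling the wrap-around arc uniformly whether $\psi > \theta_3$ or $\psi < \theta_0$, and checking that the admissible boundary case $\theta_0 = 0$ does not affect the arc-length accounting. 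Everything else is elementary.
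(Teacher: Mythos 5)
Your proposal is correct and follows essentially the same route as the paper: identify the constant value of $c(e^{i\psi},e^{i\theta_0},\dots,e^{i\theta_3})$ on each of the arcs cut out by the $\theta_j$ via the sign of the sorting permutation, then sum the signed arc lengths and divide by $2\pi$. The only cosmetic difference is that you merge $(0,\theta_0)$ and $(\theta_3,2\pi)$ into one wrap-around arc while the paper keeps five intervals; your sign bookkeeping and the resulting arithmetic agree with the paper's.
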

\begin{proof} Let $\underline{z} := (e^{i\psi}, e^{i\theta_0}, \dots, e^{i\theta_3})$. If $\psi \in (0, \theta_0) \cup (\theta_1, \theta_2) \cup (\theta_3, 2\pi)$, then $(-1)^{\underline{z}} = 1$, and if $\psi \in (\theta_0, \theta_1) \cup (\theta_2, \theta_3)$ then $(-1)^{\underline{z}} = -1$. We thus obtain
\begin{eqnarray*}
I_1(e^{i\theta_0}, \dots, e^{i\theta_3}) &=& -\frac{\zeta(2)}{4\pi}\left(\int_0^{\theta_0} d\psi-\int_{\theta_0}^{\theta_1} d\psi+\int_{\theta_1}^{\theta_2}d\psi-\int_{\theta_2}^{\theta_3}d\psi+\int_{\theta_3}^{2\pi}d\psi\right)\\
&=&-\frac{\zeta(2)}{4\pi}\left(\theta_0 - (\theta_1-\theta_0)+(\theta_2-\theta_1)- (\theta_3-\theta_2) +  (2\pi-\theta_3)\right)\\
&=&-\frac{\zeta(2)}{2\pi}\left(\theta_0-\theta_1+\theta_2 - \theta_3 + \pi \right).
\end{eqnarray*}
\end{proof}
\begin{lemma}\label{I2} If $0<\theta_1 < \theta_2< 2\pi$, then
\[
I_2(e^{i\theta_1}, e^{i\theta_2}) := \fint \fint e^{i\eta}c( e^{i\psi}, e^{i\eta}, 1, e^{i\theta_1}, e^{i\theta_2})d\eta d\psi = -i\cdot \frac{\zeta(2)}{2\pi^2}\cdot ((e^{i\theta_2}-1)(\theta_1-\pi)-(e^{i\theta_1}-1)(\theta_2-\pi)-\pi).
\]
\end{lemma}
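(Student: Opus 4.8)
The plan is to perform the $\psi$-integration first, recognize the resulting inner average as an instance of Lemma~\ref{I1}, and then carry out an elementary integration against $e^{i\eta}$. Since the weight $e^{i\eta}$ does not depend on $\psi$, Fubini's theorem gives
\[
I_2(e^{i\theta_1}, e^{i\theta_2}) = \fint e^{i\eta} J(\eta) \, d\eta, \qquad J(\eta) \deq \fint c(e^{i\psi}, e^{i\eta}, 1, e^{i\theta_1}, e^{i\theta_2}) \, d\psi,
\]
so that the task reduces to determining the function $J$ and then integrating $e^{i\eta} J(\eta)$ over $(0, 2\pi)$.

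For fixed $\eta \notin \{0, \theta_1, \theta_2\}$ the inner average $J(\eta)$ is, up to a sign, exactly the quantity computed in Lemma~\ref{I1}: one only has to sort the four points $e^{i\eta}, 1, e^{i\theta_1}, e^{i\theta_2}$ (equivalently, the angles $\eta, 0, \theta_1, \theta_2$) into cyclic order and use that $c$ is alternating. I would split $(0, 2\pi)$ into the three arcs determined by $0 < \theta_1 < \theta_2$. On $(0, \theta_1)$ the sorted order $(1, e^{i\eta}, e^{i\theta_1}, e^{i\theta_2})$ is reached by a single transposition (sign $-1$); on $(\theta_1, \theta_2)$ the sorted order $(1, e^{i\theta_1}, e^{i\eta}, e^{i\theta_2})$ is reached by a $3$-cycle (sign $+1$); and on $(\theta_2, 2\pi)$ the sorted order $(1, e^{i\theta_1}, e^{i\theta_2}, e^{i\eta})$ is reached by a $4$-cycle (sign $-1$). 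Feeding the sorted angles into the formula of Lemma~\ref{I1} and multiplying by these signs yields the piecewise-linear expression
\[
J(\eta) = \frac{\zeta(2)}{2\pi} \cdot
\begin{cases}
\pi - \eta + \theta_1 - \theta_2, & 0 < \eta < \theta_1,\\
-(\pi + \eta - \theta_1 - \theta_2), & \theta_1 < \eta < \theta_2,\\
\pi - \eta - \theta_1 + \theta_2, & \theta_2 < \eta < 2\pi.
\end{cases}
\]
As a sanity check, $J$ changes sign across $\theta_1$ and across $\theta_2$, as it must, since crossing either value transposes two adjacent points of the configuration.

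It then remains to evaluate $\fint e^{i\eta} J(\eta)\,d\eta = \frac{1}{2\pi}\int_0^{2\pi} e^{i\eta} J(\eta)\,d\eta$. On each of the three arcs the integrand is $e^{i\eta}$ times an affine function of $\eta$, so the only antiderivatives needed are $\int e^{i\eta}\,d\eta = -i e^{i\eta}$ and $\int \eta e^{i\eta}\,d\eta = e^{i\eta}(1 - i\eta)$, the latter by a single integration by parts. Collecting the boundary contributions at $0, \theta_1, \theta_2, 2\pi$, using $e^{2\pi i} = 1$, and simplifying should reproduce the claimed closed form $-i\cdot \frac{\zeta(2)}{2\pi^2}\bigl((e^{i\theta_2}-1)(\theta_1-\pi) - (e^{i\theta_1}-1)(\theta_2-\pi) - \pi\bigr)$. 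The conceptual content is entirely in the first two steps; the main obstacle is purely computational, namely the careful bookkeeping of the three sign cases together with the cancellation of the many boundary terms in the final integration, where an error in a single sign or integration limit would corrupt the closed form.
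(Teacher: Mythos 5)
Your proposal is correct and follows the paper's own proof essentially step for step: the inner $\psi$-average is identified with $I_1$ from Lemma \ref{I1} via exactly the same three-arc case analysis with the alternating signs you list (your piecewise formula for $J$ agrees with the paper's), and the final integration against $e^{i\eta}$ using the two antiderivatives you name does collapse to the stated closed form. The only part you defer --- the terminal bookkeeping --- checks out, so there is no gap.
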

\begin{proof} By Lemma \ref{I1} we have
\[
-\frac{2\pi}{\zeta(2)}\cdot I_1(e^{i\eta}, 1, e^{i\theta_1}, e^{i\theta_2}) = \left\{\begin{array}{ll} -\left(0-\eta+\theta_1 - \theta_2 + \pi \right),&\eta \in (0, \theta_1)\\
+\left(0-\theta_1+\eta - \theta_2 + \pi \right),&\eta \in (\theta_1, \theta_2)\\
-\left(0-\theta_1+\theta_2- \eta + \pi \right),&\eta \in (\theta_2, 2\pi),\\
\end{array}\right.
\]
hence
\begin{eqnarray*}
-\frac{4\pi^2}{\zeta(2)}\cdot I_2(e^{i\theta_1}, e^{i\theta_2}) &=& 2\pi \cdot \fint e^{i\eta} \cdot \frac{2\pi}{\zeta(2)}\cdot I_1(e^{i\eta}, 1, e^{i\theta_1}, e^{i\theta_2})d\eta\\
&=& -\int_0^{\theta_1} e^{i\eta} (-\eta+\theta_1 - \theta_2 + \pi)d\eta +\int_{\theta_1}^{\theta_2} e^{i\eta}(-\theta_1+\eta - \theta_2 + \pi ) d\eta\\
&&-\int_{\theta_2}^{2\pi}e^{i\eta}(-\theta_1+\theta_2- \eta + \pi)d\eta\\
&=& \int_0^{2\pi}\eta e^{i\eta} d\eta  -(\theta_1 - \theta_2 + \pi)\int_0^{\theta_1} e^{i\eta} d\eta+(-\theta_1 - \theta_2 + \pi ) \int_{\theta_1}^{\theta_2} e^{i\eta}d\eta\\
&&-(-\theta_1+\theta_2+ \pi)\int_{\theta_2}^{2\pi} e^{i\eta}d\eta\\
&=&-2\pi i + i(\theta_1 - \theta_2 + \pi)(e^{i\theta_1}-1) -i(-\theta_1 - \theta_2 + \pi )(e^{i\theta_2}-e^{i\theta_1})\\
&& + i(-\theta_1+\theta_2+ \pi)(1-e^{i\theta_2})\\
&=& 2i(-\pi+\theta_1e^{i\theta_2}-\theta_2e^{i\theta_1}-\theta_1+\pi e^{i\theta_1} + \theta_2 - \pi e^{i\theta_2})\\
&=& 2i((e^{i\theta_2}-1)(\theta_1-\pi)-(e^{i\theta_1}-1)(\theta_2-\pi)-\pi).\qedhere
\end{eqnarray*}
\end{proof}

\begin{lemma} For all $\theta \in (0, 2\pi)$ we have
\[
I_3(e^{i\theta}) := \fint \fint \fint e^{i(\eta-\phi)}c( e^{i\eta}, e^{i\phi}, e^{i\psi}, 1, e^{i\theta})d\eta d\phi d\psi = -i\cdot \frac{\zeta(2)}{2\pi^2} \cdot(2\sin(\theta)+\theta -\pi).
\]
\end{lemma}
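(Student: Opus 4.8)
The plan is to mirror the inductive structure of the two preceding lemmas and reduce $I_3$ to the already-computed integral $I_2$ by carrying out the innermost integrations first. Writing the weight as $e^{i(\eta-\phi)} = e^{-i\phi}\cdot e^{i\eta}$, I would pull the factor $e^{-i\phi}$ outside and first evaluate the double average
\[
\fint \fint e^{i\eta}\, c(e^{i\eta}, e^{i\phi}, e^{i\psi}, 1, e^{i\theta}) \, d\eta \, d\psi
\]
for fixed $\phi, \theta$. The key observation is that, since $c$ is alternating, reordering the five arguments $(e^{i\eta}, e^{i\phi}, e^{i\psi}, 1, e^{i\theta})$ into the shape $(e^{i\psi}, e^{i\eta}, 1, \cdot, \cdot)$ appearing in $I_2$ introduces only a sign depending on the relative position of $\phi$ and $\theta$: the permutation bringing the slots into the order of $I_2$ is a $4$-cycle, hence odd, and one further transposition of the last two slots is needed precisely when $\phi>\theta$. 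This yields the clean reduction
\[
\fint \fint e^{i\eta}\, c(e^{i\eta}, e^{i\phi}, e^{i\psi}, 1, e^{i\theta}) \, d\eta \, d\psi = \begin{cases} -I_2(e^{i\phi}, e^{i\theta}), & 0 < \phi < \theta,\\[1mm] \phantom{-}I_2(e^{i\theta}, e^{i\phi}), & \theta < \phi < 2\pi.\end{cases}
\]

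With this in hand, $I_3(e^{i\theta})$ becomes a single average over $\phi$ of $e^{-i\phi}$ times the right-hand side, and I would substitute the closed form for $I_2$ from Lemma~\ref{I2}. Setting $A(\phi) := (e^{i\theta}-1)(\phi-\pi) - (e^{i\phi}-1)(\theta-\pi)$, the two cases of $I_2$ differ only by the sign of $A$, so the $\phi$-dependent pieces recombine into integrals over the full circle and the expression collapses to
\[
I_3(e^{i\theta}) = -i\, \frac{\zeta(2)}{4\pi^3}\left[ -\int_0^{2\pi} e^{-i\phi} A(\phi)\, d\phi + \pi\left(\int_0^\theta - \int_\theta^{2\pi}\right) e^{-i\phi}\, d\phi\right].
\]
The remaining integrals are elementary: integration by parts gives $\int_0^{2\pi}\phi\, e^{-i\phi}\, d\phi = 2\pi i$ and $\int_0^{2\pi} e^{-i\phi}\, d\phi = 0$, so that $\int_0^{2\pi} e^{-i\phi}A(\phi)\,d\phi = 2\pi i(e^{i\theta}-1) - 2\pi(\theta-\pi)$, while the boundary term contributes $\pi\bigl(\int_0^\theta - \int_\theta^{2\pi}\bigr)e^{-i\phi}\,d\phi = 2\pi i(e^{-i\theta}-1)$.

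Finally I would assemble these pieces. The computation produces the combination $-i(e^{i\theta}-1) + i(e^{-i\theta}-1)$, which simplifies via $e^{i\theta}-e^{-i\theta}=2i\sin\theta$ to exactly $2\sin\theta$, leaving the asserted value $-i\,\frac{\zeta(2)}{2\pi^2}(2\sin\theta+\theta-\pi)$. The main obstacle I anticipate is organizational rather than conceptual: getting the sign of the $4$-cycle reordering right and correctly tracking the case split $\phi \lessgtr \theta$ through the substitution of $I_2$, since a misplaced sign there would spoil the cancellation that turns the exponentials into $2\sin\theta$. Everything downstream is a routine evaluation of elementary integrals.
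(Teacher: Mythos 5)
Your proposal is correct and follows essentially the same route as the paper: both reduce $I_3$ to the previously computed $I_2$ by reordering the arguments of the alternating cocycle $c$ (the paper uses the even $5$-cycle to write the inner double average uniformly as $I_2(e^{i\theta},e^{i\phi})$ and case-splits its formula at $\phi=\theta$, which is exactly your parity bookkeeping in disguise), and then evaluate the same elementary integrals $\int \phi e^{-i\phi}\,d\phi$ and $\int e^{-i\phi}\,d\phi$. Your recombination of the $A(\phi)$-part into a single full-circle integral is a slightly tidier organization of the paper's term-by-term evaluation, but the argument is the same and all signs and constants check out.
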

\begin{proof} We have
\[
I_3(e^{i\theta}) = \fint e^{-i\phi} \left( \fint \fint e^{i\eta} c(e^{i\psi},  e^{i\eta}, 1,  e^{i\theta}, e^{i\phi})d\eta  d\psi \right)d\phi = \fint e^{-i\phi} I_2(e^{i\theta}, e^{i\phi})d\phi,
\]
where by Lemma \ref{I2},
\[
-\frac{2\pi^2}{\zeta(2) \cdot i} \cdot I_2(e^{i\theta}, e^{i\phi}) = \left\{\begin{array}{ll} -(e^{i\theta}-1)(\phi-\pi)+(e^{i\phi}-1)(\theta-\pi)+\pi, & \phi \in (0, \theta)\\
 (e^{i\phi}-1)(\theta-\pi)-(e^{i\theta}-1)(\phi-\pi)-\pi, & \phi \in (\theta, 2\pi).
 \end{array}\right.
\]
Using that
\[
\int_0^\theta \phi e^{-i\phi}d\phi = -1+e^{-i\theta}(1+i\theta),\quad \int_\theta^{2\pi} \phi e^{-i\phi}d\phi = 2\pi i + 1 - e^{-i\theta}(1+i\theta),
\]
\[
\int_0^\theta e^{-i\phi}d\phi =ie^{-i\theta}-i \quad \text{and} \quad \int_\theta^{2\pi} e^{-i\phi}d\phi = i-ie^{-i\theta},
\]
we conclude that
\begin{eqnarray*} 
-\frac{4\pi^3}{\zeta(2) \cdot i} \cdot I_3(e^{i\theta}) &=& 2\pi \cdot \fint e^{-i\phi} \cdot \frac{2\pi^2}{\zeta(2) \cdot i} \cdot I_2(e^{i\theta}, e^{i\phi}) d\phi = \int_0^{2\pi}  e^{-i\phi} \cdot \frac{2\pi^2}{\zeta(2) \cdot i} \cdot I_2(e^{i\theta}, e^{i\phi}) d\phi \\
&=& \int_0^{\theta} -e^{-i\phi}(e^{i\theta}-1)(\phi-\pi)+e^{-i\phi}(e^{i\phi}-1)(\theta-\pi)+e^{-i\phi}\pi d\phi \\
&&+ \int_{\theta}^{2\pi} e^{-i\phi}(e^{i\phi}-1)(\theta-\pi)-e^{-i\phi}(e^{i\theta}-1)(\phi-\pi)-e^{-i\phi}\pi d\phi\\
&=& -(e^{i\theta}-1) \cdot  \int_0^{\theta} \phi e^{-i\phi} d\phi +(e^{i\theta}\pi - \theta +\pi) \cdot  \int_0^{\theta} e^{-i\phi} d\phi +(\theta - \pi) \cdot \int_0^\theta d\phi\\
&&- (e^{i\theta}-1) \cdot  \int_{\theta}^{2\pi} \phi e^{-i\phi} d\phi + (e^{i\theta}\pi-\theta - \pi)  \cdot  \int_{\theta}^{2\pi} e^{-i\phi} d\phi +(\theta-\pi) \cdot \int_\theta^{2\pi} d\phi\\
&=& 2\pi(\theta - \pi) - (e^{i\theta}-1) \cdot 2\pi i + [(e^{i\theta}\pi - \theta +\pi)-(e^{i\theta}\pi-\theta - \pi)](ie^{-i\theta}-i )\\
&=& 2\pi \cdot (\theta - \pi-(ie^{i\theta}-i)+(ie^{-i\theta}-i )) = 2\pi \cdot (2\sin (\theta) + \theta - \pi).\\
\end{eqnarray*}\qedhere
\end{proof}

\subsection{The integrand}\label{SecIntL2} We now turn to the second step of our outline, the computation of the integrand $F^\flat_c$ appearing in Proposition \ref{PropGeneralFormula} for our specific cocycle $c(\underline{z}) =  -\frac{\zeta(2)}{2} \cdot (-1)^{\underline{z}}$. We recall that $F_c^\flat$ was defined in Subsection \ref{SecIntegrand}, and in the sequel we will use the notation of that subsection. In particular, the functions $r_c$ and $v_c^\flat$ are defined as there.
\begin{lemma} If $0\leq \theta_1 < \theta_2 < 2\pi$, then $v_c^\flat(\theta_1, \theta_2) = v_1(\theta_1, \theta_2) + v_2(\theta_1, \theta_2)$, where
\begin{eqnarray*}
v_1(\theta_1, \theta_2) &:=& \frac{\zeta(2)}{4\pi^2} \cdot (\cos(\theta_1)+\cos(\theta_2)) \cdot ({\theta_2-\theta_1}-\pi),\\
v_2(\theta_1, \theta_2) &:=&  \frac{3\cdot \zeta(2)}{2\pi^2} \cdot  (\sin(\theta_1)-\sin(\theta_2))  \cdot \log(\sin((\theta_2-\theta_1)/2)).
\end{eqnarray*}
\end{lemma}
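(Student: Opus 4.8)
The plan is to feed the already-computed value of $I_3$ into the definition of $r_c$, carry out the resulting one-variable integral in closed form, and then extract the imaginary part defining $v_c^\flat$.

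First I would observe that the innermost triple integral appearing in the definition of $r_c$ is precisely the imaginary part of $I_3$. Indeed, since $c$ is real-valued and $\sin(\eta - \phi) = {\rm Im}(e^{i(\eta-\phi)})$, linearity of the averaging integrals gives
\[
\fint\fint\fint \sin(\eta-\phi)\, c(e^{i\eta}, e^{i\phi}, e^{i\psi}, 1, e^{i\zeta})\, d\eta\, d\phi\, d\psi = {\rm Im}\bigl(I_3(e^{i\zeta})\bigr) = -\frac{\zeta(2)}{2\pi^2}\bigl(2\sin(\zeta) + \zeta - \pi\bigr),
\]
using the formula for $I_3$ established above. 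Substituting this into the definition of $r_c$ reduces the problem to the single integral
\[
r_c(\varphi) = \frac{\zeta(2)}{4\pi^2}(1-e^{i\varphi})\int_\pi^\varphi \frac{2\sin(\zeta) + \zeta - \pi}{1-\cos(\zeta)}\, d\zeta.
\]

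Next I would evaluate this integral by splitting it into its two summands. For the $2\sin(\zeta)$ term I use $\frac{d}{d\zeta}\log(1-\cos(\zeta)) = \frac{\sin(\zeta)}{1-\cos(\zeta)}$ together with the half-angle identity $1-\cos(\zeta) = 2\sin^2(\zeta/2)$, which yields $4\log\sin(\varphi/2)$ after evaluation, the constant $2\log 2$ terms cancelling. For the $(\zeta-\pi)$ term I integrate by parts using $\frac{1}{1-\cos(\zeta)} = \tfrac12\csc^2(\zeta/2) = -\frac{d}{d\zeta}\cot(\zeta/2)$; the boundary contribution at $\zeta = \pi$ vanishes because both $\zeta - \pi$ and $\cot(\pi/2)$ are zero there, leaving $-(\varphi-\pi)\cot(\varphi/2) + 2\log\sin(\varphi/2)$. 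Adding the two contributions gives
\[
\int_\pi^\varphi \frac{2\sin(\zeta)+\zeta-\pi}{1-\cos(\zeta)}\, d\zeta = 6\log\sin(\varphi/2) - (\varphi-\pi)\cot(\varphi/2),
\]
and hence a closed form for $r_c(\varphi)$.

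Finally I would compute $v_c^\flat(\theta_1,\theta_2) = {\rm Im}(e^{i\theta_1} r_c(\theta_2 \ominus \theta_1))$. Since $0 \le \theta_1 < \theta_2 < 2\pi$ we have $\theta_2 \ominus \theta_1 = \theta_2 - \theta_1 =: \varphi \in (0, 2\pi)$, so $\sin(\varphi/2) > 0$ and the logarithms above are unproblematic. Using $e^{i\theta_1}(1-e^{i\varphi}) = e^{i\theta_1} - e^{i\theta_2}$ and the fact that the bracketed factor is real, the imaginary part picks out $\sin\theta_1 - \sin\theta_2$, giving
\[
v_c^\flat(\theta_1,\theta_2) = \frac{\zeta(2)}{4\pi^2}(\sin\theta_1-\sin\theta_2)\Bigl[6\log\sin(\varphi/2) - (\varphi-\pi)\cot(\varphi/2)\Bigr].
\]
The logarithmic summand is immediately $v_2$. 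For the remaining summand the key identity is the sum-to-product computation $(\sin\theta_2 - \sin\theta_1)\cot(\varphi/2) = \cos\theta_1 + \cos\theta_2$, which follows from $\sin\theta_2 - \sin\theta_1 = 2\cos(\tfrac{\theta_1+\theta_2}{2})\sin(\tfrac{\varphi}{2})$ and $\cos\theta_1 + \cos\theta_2 = 2\cos(\tfrac{\theta_1+\theta_2}{2})\cos(\tfrac{\varphi}{2})$; it converts the cotangent term into $\frac{\zeta(2)}{4\pi^2}(\cos\theta_1+\cos\theta_2)(\theta_2-\theta_1-\pi) = v_1$. The only real obstacles are bookkeeping ones: correctly tracking the cancelling $2\log 2$ boundary contribution in the first integral and spotting the sum-to-product identity that identifies the cotangent term with $v_1$.
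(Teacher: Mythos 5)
Your proposal is correct and follows essentially the same route as the paper: substitute the computed value of $I_3$ into $r_c$ (you extract it via ${\rm Im}(I_3)$ using that $c$ is real-valued, where the paper uses antisymmetry of $c$ to replace $\sin(\eta-\phi)$ by $-ie^{i(\eta-\phi)}$ — the same fact in different clothing), evaluate the resulting one-variable integral to get $(\pi-\varphi)\cot(\varphi/2)+6\log\sin(\varphi/2)$, take the imaginary part using $e^{i\theta_1}(1-e^{i\varphi})=e^{i\theta_1}-e^{i\theta_2}$, and finish with the sum-to-product identity $(\sin\theta_2-\sin\theta_1)\cot((\theta_2-\theta_1)/2)=\cos\theta_1+\cos\theta_2$. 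All steps check out against the paper's computation.
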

\begin{proof} Using the fact that $- i \cdot \cos(\eta-\varphi)$ is symmetric in $\eta$ and $\varphi$ and hence integrates to $0$ against $c(e^{i\eta}, e^{i\phi}, e^{i\psi}, 1, e^{i\varphi})$ we can write the function $r_c$ as 
\begin{eqnarray*}
r_c(\varphi) &=&-\frac 1 2(1-e^{i\varphi}) \cdot \int_{\pi}^\varphi \frac1{1-\cos(\zeta)}  \fint \fint \fint {\sin(\eta-\varphi)} c(e^{i\eta}, e^{i\phi}, e^{i\psi}, 1, e^{i\zeta})d\eta d\phi d\psi d\zeta\\
&=& \frac 1 2(1-e^{i\varphi}) \cdot \int_{\pi}^\varphi \frac1{1-\cos(\zeta)}  \fint \fint \fint ie^{i(\eta-\varphi)} c(e^{i\eta}, e^{i\phi}, e^{i\psi}, 1, e^{i\zeta})d\eta d\phi d\psi d\zeta\\
&=& \frac i 2(1-e^{i\varphi}) \cdot \int_{\pi}^\varphi \frac1{1-\cos(\zeta)}  I_3(e^{i\zeta}) d\zeta\\
&=& \frac{\zeta(2)}{4\pi^2} \cdot (1-e^{i\varphi}) \cdot \int_{\pi}^\varphi \frac{2\sin(\zeta)+\zeta -\pi}{1-\cos(\zeta)} d\zeta\\
&=&  \frac{\zeta(2)}{4\pi^2} \cdot (1-e^{i\varphi}) \cdot \left.\left((\pi-\zeta)\cdot \cot\left(\frac \zeta 2\right)+6 \log\left(\sin\left(\frac \zeta 2\right)\right) \right)\right|_{\zeta = \pi}^\varphi\\
&=&  \frac{\zeta(2)}{4\pi^2} \cdot (1-e^{i\varphi}) \cdot ((\pi-\varphi)\cdot \cot(\varphi/2)+6 \log(\sin(\varphi/2))).
\end{eqnarray*}
We deduce that
\begin{eqnarray*}
v_c^\flat(\theta_1, \theta_2) &=& {\rm Im}\left( -\frac{\zeta(2)}{4\pi^2} \cdot e^{i\theta_1} \cdot (1-e^{i{(\theta_2\ominus\theta_1)}}) \cdot ({\theta_2\ominus\theta_1}-\pi) \cdot \cot((\theta_2\ominus\theta_1)/2)\right.\\
&&\left. +\frac{3\cdot \zeta(2)}{2\pi^2} \cdot e^{i\theta_1} \cdot  (1-e^{i{\theta_2\ominus\theta_1}})  \cdot \log(\sin((\theta_2\ominus\theta_1)/2)) \right)\\
&=& -{\rm Im}\left( \frac{\zeta(2)}{4\pi^2} \cdot (e^{i\theta_1}-e^{i{\theta_2}}) \cdot ({\theta_2\ominus\theta_1}-\pi) \cdot \cot((\theta_2\ominus \theta_1)/2)\right.\\
&&\left. +\frac{3\cdot \zeta(2)}{2\pi^2} \cdot  (e^{i\theta_1}-e^{i{\theta_2}}) \cdot \log(\sin((\theta_2\ominus\theta_1)/2)) \right)\\
&=& -\frac{\zeta(2)}{4\pi^2} \cdot (\sin(\theta_1)-\sin(\theta_2)) \cdot ({\theta_2\ominus\theta_1}-\pi) \cdot \cot((\theta_2\ominus \theta_1)/2)\\
&&  +\frac{3\cdot \zeta(2)}{2\pi^2} \cdot  (\sin(\theta_1)-\sin(\theta_2))  \cdot \log(\sin((\theta_2\ominus\theta_1)/2)).
\end{eqnarray*}
If $0\leq \theta_1 < \theta_2 < 2\pi$ we may replace $\ominus$ by $-$ in this formula. Then the second summand is just $v_2(\theta_1, \theta_2)$. To see that the first summand equals $v_1(\theta_1, \theta_2)$ we use the trigonometric identity
\begin{eqnarray*}
-(\sin(\theta_1)-\sin(\theta_2)) \cdot \cot((\theta_2- \theta_1)/2) &=& -2 \cdot \sin((\theta_1-\theta_2)/2) \cdot \cos((\theta_1+\theta_2)/2) \cdot \cot((\theta_2- \theta_1)/2)\\
&=& 2 \cdot  \cos((\theta_2+\theta_1)/2) \cdot  \cos((\theta_2- \theta_1)/2)\\
&=& 2 \cdot (\cos^2(\theta_2/2) - \sin^2(\theta_1/2))\\
&=& 2 \cdot (1/2 + 1/2 \cos(\theta_2) - (1/2 -1/2 \cos(\theta_1)))\\
&=&  \cos(\theta_1)+\cos(\theta_2).
\end{eqnarray*}
This finishes the proof.
\end{proof}
\begin{corollary}\label{Fcflat2} If $c(\underline{z}) = -\frac{\zeta(2)}{2} \cdot (-1)^{\underline{z}}$ and $0<\varphi_1 <\varphi_2< 2\pi$, then
\begin{eqnarray*}
F_c^\flat(\varphi_1, \varphi_2)&=&  -\frac{\zeta(2)}{4\pi^2} \cdot \left( (3\varphi_1-2\pi)(\cos(\varphi_2)-1) - (3\varphi_2-4\pi)(\cos(\varphi_1)-1)\right) \\
&& +  \frac{3\zeta(2)}{2\pi^2} \cdot \left(\sin(\varphi_1) \cdot \log\left(\frac{\sin((\varphi_2-\varphi_1)/2)}{\sin(\varphi_1/2)}\right)- \sin(\varphi_2) \cdot \log\left(\frac{\sin((\varphi_2-\varphi_1)/2)}{\sin(\varphi_2/2)}\right)\right).
\end{eqnarray*}
\end{corollary}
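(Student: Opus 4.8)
The plan is to unwind the definition of $F_c^\flat$ from Subsection~\ref{SecIntegrand}(4) and match each of its two constituent pieces against the basic integrals already computed. Recall that
\[
F_c^\flat(\varphi_1, \varphi_2) = \fint \fint \sin(\phi)\, c(e^{i\eta}, e^{i\phi}, 1, e^{i\varphi_1}, e^{i\varphi_2})\, d\eta\, d\phi + v_c^\flat(\varphi_1, \varphi_2) - v_c^\flat(0, \varphi_2) + v_c^\flat(0, \varphi_1).
\]
The logarithmic terms in the asserted formula will come entirely from the $v_c^\flat$-terms, whereas the double integral together with the polynomial part of $v_c^\flat$ will assemble into the first, algebraic summand. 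So the two pieces can be handled separately and added at the end.

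First I would treat the double integral. Writing $\sin(\phi) = {\rm Im}(e^{i\phi})$ and using that the cocycle $c$ is real-valued, I can pull the imaginary part outside the integral (integration against the real measure $\fint\fint$ is real-linear). After relabelling the two integration variables so that the weighted slot matches, the resulting integral is precisely $I_2(e^{i\varphi_1}, e^{i\varphi_2})$ as defined in Lemma~\ref{I2}; the normalizations agree since both expressions use the same averaged integral $\fint\fint$. Thus the double integral equals ${\rm Im}\bigl(I_2(e^{i\varphi_1}, e^{i\varphi_2})\bigr)$. Since Lemma~\ref{I2} gives $I_2 = -i\cdot\tfrac{\zeta(2)}{2\pi^2}\cdot A$ with $A$ the explicit bracketed expression there, taking the imaginary part reduces to $-\tfrac{\zeta(2)}{2\pi^2}\,{\rm Re}(A)$, and ${\rm Re}(A)$ is read off simply by replacing each $e^{i\theta}$ by $\cos\theta$.

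Next I would evaluate the three $v_c^\flat$-terms using the preceding lemma, which splits $v_c^\flat = v_1 + v_2$. Here one uses that the lemma is stated for $0 \le \theta_1 < \theta_2 < 2\pi$ and hence permits $\theta_1 = 0$, and that, since all arguments satisfy $0 \le \theta_1 < \theta_2 < 2\pi$, the cut difference $\ominus$ may be replaced by ordinary subtraction. The $v_2$-contribution $v_2(\varphi_1,\varphi_2) - v_2(0,\varphi_2) + v_2(0,\varphi_1)$ collapses immediately, via $\log a - \log b = \log(a/b)$, into exactly the logarithmic term asserted in the corollary; no further work is needed for it.

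The main (and essentially only) obstacle is the final bookkeeping. I would add the $v_1$-contribution to ${\rm Im}(I_2)$, factor out the common constant $\tfrac{\zeta(2)}{4\pi^2}$, and expand both into monomials in $\varphi_1, \varphi_2, \cos\varphi_1, \cos\varphi_2$ and $\pi$. Several terms cancel—in particular the diagonal $\varphi_j\cos\varphi_j$ cross terms—and the remainder must be matched against $-\bigl[(3\varphi_1 - 2\pi)(\cos\varphi_2 - 1) - (3\varphi_2 - 4\pi)(\cos\varphi_1 - 1)\bigr]$ by comparing the coefficient of each monomial. This is a routine but somewhat delicate algebraic verification; no conceptual difficulty remains once the two pieces are brought into the same normalization and the cancellations are carried out.
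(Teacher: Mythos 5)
Your proposal is correct and follows essentially the same route as the paper: the double integral is identified with ${\rm Im}\bigl(I_2(e^{i\varphi_1}, e^{i\varphi_2})\bigr)$ (the paper writes the identical quantity as ${\rm Re}(-i\cdot I_2)$), the $v_c^\flat$-terms are split into their $v_1$- and $v_2$-parts via the preceding lemma, and the $v_2$-sum collapses to the logarithmic term while the $v_1$-sum plus the integral are matched coefficient by coefficient against the algebraic term. The bookkeeping you defer does close up exactly as you describe, including the cancellation of the diagonal $\varphi_j\cos\varphi_j$ terms.
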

\begin{proof} We recall that
\[
F_c^\flat(\varphi_1, \varphi_2) =  \fint \fint \sin(\phi)c(e^{i\eta}, e^{i\phi}, 1, e^{i\varphi_1}, e^{i\varphi_2})d\eta\, d\phi + v_c^\flat(\varphi_1, \varphi_2) - v_c^\flat(0, \varphi_2) + v_c^\flat(0, \varphi_1).
\]
 If $0\leq \varphi_1 <\varphi_2< 2\pi$, then the first summand is given by
\begin{eqnarray*}
  \fint \fint \sin(\phi)c(e^{i\eta}, e^{i\phi}, 1, e^{i\varphi_1}, e^{i\varphi_2})d\eta\, d\phi &=&  {\rm Re}(-i \cdot I_2(e^{i\varphi_1}, e^{i\varphi_2}))\\ &=& -\frac{\zeta(2)}{2\pi^2}\cdot ((\cos(\varphi_2)-1)(\varphi_1-\pi)-(\cos(\varphi_1)-1)(\varphi_2-\pi)-\pi).
\end{eqnarray*}
We regroup the remaining terms as
\[
v_c^\flat(\varphi_1, \varphi_2) - v_c^\flat(0, \varphi_2) + v_c^\flat(0, \varphi_1) = (v_1(\varphi_1, \varphi_2) - v_1(0, \varphi_2) + v_1(0, \varphi_1)) + (v_2(\varphi_1, \varphi_2) - v_2(0, \varphi_2) + v_2(0, \varphi_1)).
\]
The $v_1$-terms then sum up to
\begin{eqnarray*}
&& \frac{4\pi^2}{\zeta(2)} \cdot  (v_1(\varphi_1, \varphi_2) - v_1(0, \varphi_2) + v_1(0, \varphi_1))\\
&=& (\cos(\varphi_1)+\cos(\varphi_2))\cdot(\varphi_2-\varphi_1-\pi)-(1+\cos(\varphi_2))\cdot(\varphi_2-\pi)+(1+\cos(\varphi_1))\cdot(\varphi_1-\pi)\\
&=& \cos(\varphi_1)\varphi_2 - \cos(\varphi_1)\varphi_1 - \cos(\varphi_1)\pi+\cos(\varphi_2)\varphi_2 - \cos(\varphi_2)\varphi_1 - \cos(\varphi_2)\pi\\
&& -\varphi_2+\pi-\cos(\varphi_2)\varphi_2 +\cos(\varphi_2)\pi+\varphi_1-\pi+\cos(\varphi_1)\varphi_1 -\cos(\varphi_1)\pi\\
&=& \cos(\varphi_1)\varphi_2 - 2\cos(\varphi_1)\pi -\cos(\varphi_2)\varphi_1-\varphi_2+\varphi_1,
\end{eqnarray*}
whereas the $v_2$-terms sum up to
\begin{eqnarray*}
&& \frac{2\pi^2}{3\cdot \zeta(2)} \cdot  (v_2(\varphi_1, \varphi_2) - v_2(0, \varphi_2) + v_2(0, \varphi_1))\\ 
&=&  (\sin(\varphi_1)-\sin(\varphi_2))  \cdot \log(\sin((\varphi_2-\varphi_1)/2))+\sin(\varphi_2)  \cdot \log(\sin(\varphi_2/2))-\sin(\varphi_1)  \cdot \log(\sin(\varphi_1/2))\\
&=&   \sin(\varphi_1) \cdot ( \log(\sin((\varphi_2-\varphi_1)/2))- \log(\sin(\varphi_1/2)))-\sin(\varphi_2)\cdot (\log(\sin((\varphi_2-\varphi_1)/2))- \log(\sin(\varphi_2/2)))\\
&=& \sin(\varphi_1) \cdot \log\left(\frac{\sin((\varphi_2-\varphi_1)/2)}{\sin(\varphi_1/2)}\right)- \sin(\varphi_2) \cdot \log\left(\frac{\sin((\varphi_2-\varphi_1)/2)}{\sin(\varphi_2/2)}\right)
\end{eqnarray*}
We thus obtain
\begin{eqnarray*}
 \frac{4\pi^2}{\zeta(2)} \cdot F_c^\flat(\varphi_1, \varphi_2) &=& -2\cdot(\cos(\varphi_2)-1)(\varphi_1-\pi)+2\cdot(\cos(\varphi_1)-1)(\varphi_2-\pi)+2\pi\\
 &&  +\cos(\varphi_1)\varphi_2 - 2\cos(\varphi_1)\pi -\cos(\varphi_2)\varphi_1-\varphi_2+\varphi_1\\
 &&+ 6 \sin(\varphi_1) \cdot \log\left(\frac{\sin((\varphi_2-\varphi_1)/2)}{\sin(\varphi_1/2)}\right)-6\sin(\varphi_2) \cdot \log\left(\frac{\sin((\varphi_2-\varphi_1)/2)}{\sin(\varphi_2/2)}\right)\\
 &=& -3\varphi_1(\cos(\varphi_2)-1)+3\varphi_2(\cos(\varphi_1)-1)-2\pi(2\cos(\varphi_1)+\cos(\varphi_2) - 1)\\
  &&+ 6 \sin(\varphi_1) \cdot \log\left(\frac{\sin((\varphi_2-\varphi_1)/2)}{\sin(\varphi_1/2)}\right)- 6\sin(\varphi_2) \cdot \log\left(\frac{\sin((\varphi_2-\varphi_1)/2)}{\sin(\varphi_2/2)}\right).
\end{eqnarray*}
Finally note that
\begin{eqnarray*}
&&-3\varphi_1(\cos(\varphi_2)-1)+3\varphi_2(\cos(\varphi_1)-1)-2\pi(2\cos(\varphi_1)+\cos(\varphi_2) - 1)\\
&=& -3\varphi_1(\cos(\varphi_2)-1)+3\varphi_2(\cos(\varphi_1)-1)-4\pi(\cos(\varphi_1)-1) + 2\pi(\cos(\varphi_2)-1)\\
&=& -(3\varphi_1-2\pi)(\cos(\varphi_2)-1) + (3\varphi_2-4\pi)(\cos(\varphi_1)-1).
\end{eqnarray*}
\end{proof}
\subsection{The final formula}\label{L2Final} Combining Proposition \ref{PropGeneralFormula}, Lemma \ref{I1} and Corollary \ref{Fcflat2} we finally obtain:
\begin{theorem}\label{ThmFinalFormula} The Rogers dilogarithm $L_2$ is given by the formula
\begin{eqnarray*}
L_2(x) &=& \frac{\zeta(2)}{2} + \frac{\zeta(2)}{2\pi}\left( \arctan\left(\frac{2x}{1-x^2}\right) - \frac{\pi}{2} \right)\\
&& - \int_0^{\frac{-x-1}{2x}} F_{c}^{\flat}\left(2\cdot {\rm arccot}\left(-t+\frac{1-x}{2x} \right),2\cdot {\rm arccot}\left(-t-\frac{1-x}{2x} \right)\right)dt\\
&& + \int_0^{\frac{x+1}{2}} F_{c}^{\flat}\left(2\cdot {\rm arccot}\left(-t+\frac{1-x}{2} \right),2\cdot {\rm arccot}\left(-t-\frac{1-x}{2} \right)\right)dt\\
&&-  \int_0^{\frac{1}{2}}  F_{c}^{\flat}\left(2\cdot {\rm arccot}\left(-t+\frac{1}{2} \right), 2\cdot {\rm arccot}\left(-t-\frac{1}{2} \right)\right)dt\\
&& + \int_0^{\frac x 2} F_{c}^{\flat}\left(2\cdot {\rm arccot}\left(-t+\frac x 2 \right), 2\cdot {\rm arccot}\left(-t-\frac x 2 \right)\right)dt,
\end{eqnarray*}
where 
\begin{eqnarray*}
F_c^\flat(\varphi_1, \varphi_2)&=& -\frac{\zeta(2)}{4\pi^2} \cdot \left( (3\varphi_1-2\pi)(\cos(\varphi_2)-1) - (3\varphi_2-4\pi)(\cos(\varphi_1)-1)\right) \\
&& +  \frac{3\zeta(2)}{2\pi^2} \cdot \left(\sin(\varphi_1) \cdot \log\left(\frac{\sin((\varphi_2-\varphi_1)/2)}{\sin(\varphi_1/2)}\right)- \sin(\varphi_2) \cdot \log\left(\frac{\sin((\varphi_2-\varphi_1)/2)}{\sin(\varphi_2/2)}\right)\right).
\end{eqnarray*}\qed
\end{theorem}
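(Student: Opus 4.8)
The plan is to recognize the Rogers dilogarithm as the distinguished solution $L_2 = L^{(0, \zeta(2))}$ of the perturbed system, corresponding to vanishing right-hand side $R \equiv 0$ and reflection constant $C = \zeta(2)$, and then to specialize the general integration formula of Proposition \ref{PropGeneralFormula} to the associated cocycle $c = c^{(0, \zeta(2))}$ whose values were identified in Subsection \ref{SecRogers} as $c(\underline z) = -\tfrac{\zeta(2)}{2}(-1)^{\underline z}$. Since Proposition \ref{PropGeneralFormula} already expresses $L^{(R,C)}(x)$ as a sum of one cross-ratio average of $c$ and four one-dimensional integrals of $F^\flat_{(R,C)} = F^\flat_c$, the proof reduces to inserting the explicit expressions for these two ingredients that were computed in the preceding subsections.

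First I would evaluate the cross-ratio term $\tfrac{1}{2\pi}\int_0^{2\pi} c(e^{i\psi}, 1, -1, \mathcal C(x), -i)\,d\psi$. By the discussion culminating in \eqref{thetax}, the four points $1, -1, \mathcal C(x), -i$ carry arguments $0 < \pi < \theta(x) < 3\pi/2$, so they are already cyclically ordered and Lemma \ref{I1} applies directly with $(\theta_0, \theta_1, \theta_2, \theta_3) = (0, \pi, \theta(x), 3\pi/2)$. This yields the value $-\tfrac{\zeta(2)}{2\pi}\bigl(\theta(x) - \tfrac{3\pi}{2}\bigr)$; substituting $\theta(x) = \arctan\bigl(\tfrac{2x}{1-x^2}\bigr) + \pi$ and adding the constant $C/2 = \zeta(2)/2$ carried by Proposition \ref{PropGeneralFormula} produces exactly the first line $\tfrac{\zeta(2)}{2} + \tfrac{\zeta(2)}{2\pi}\bigl(\arctan(\tfrac{2x}{1-x^2}) - \tfrac{\pi}{2}\bigr)$ of the claimed formula.

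Next I would substitute the closed form of $F_c^\flat$ into the four integral terms. The one point requiring verification is that Corollary \ref{Fcflat2}, which computes $F_c^\flat(\varphi_1, \varphi_2)$ only on the region $\{0 < \varphi_1 < \varphi_2 < 2\pi\} = \Omega^+$, is applicable throughout the domains of integration. This is guaranteed by the observation at the end of Subsection \ref{SecExplicit}: each pair of arguments appearing in Proposition \ref{PropGeneralFormula} has the form $\bigl(2\arccot(-t+a), 2\arccot(-t-a)\bigr)$ with $a > 0$ — concretely $a \in \{\tfrac{1-x}{2x}, \tfrac{1-x}{2}, \tfrac12, \tfrac x2\}$, all positive for $x \in (0,1)$ — and since $\arccot$ is strictly decreasing, the first entry is always strictly smaller than the second, so the pair lies in $\Omega^+$. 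With this confirmed, Corollary \ref{Fcflat2} supplies verbatim the displayed expression for $F_c^\flat$, and assembling the cross-ratio term with the four integrals gives the theorem.

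There is essentially no analytic obstacle remaining at this final stage: the genuine work — the construction of $F_c^\flat$ and the evaluation of the basic integrals $I_1, I_2, I_3$ — has already been carried out in Subsections \ref{SecBasicInt} and \ref{SecIntL2}. The only care required is the bookkeeping of signs and constants in the first line and the elementary domain check described above, after which the statement follows by direct substitution into Proposition \ref{PropGeneralFormula}.
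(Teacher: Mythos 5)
Your proposal is correct and follows exactly the paper's route: the paper's proof of Theorem \ref{ThmFinalFormula} is precisely the combination of Proposition \ref{PropGeneralFormula}, Lemma \ref{I1} (applied at $(\theta_0,\theta_1,\theta_2,\theta_3)=(0,\pi,\theta(x),3\pi/2)$) and Corollary \ref{Fcflat2}, which is what you carry out. Your additional check that each pair of $\arccot$-arguments lies in $\Omega^+$ so that Corollary \ref{Fcflat2} applies is exactly the point the paper flags at the end of Subsection \ref{SecExplicit}.
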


\bibliographystyle{amsplain}

\end{document}